\documentclass[11pt,a4paper]{article}
\usepackage[T1]{fontenc}
\usepackage{lmodern}
\usepackage[a4paper,margin=25mm]{geometry}
\usepackage{amsmath,amssymb,amsthm,mathtools}
\usepackage{microtype}
\usepackage{enumitem}
\usepackage[authoryear,round]{natbib}
\usepackage[hidelinks]{hyperref}
\hypersetup{pdftitle={Bergsma--Dassios sign covariance with ties: a nonnegative decomposition and sharp bounds},
pdfauthor={Wicher Bergsma and Angelos Dassios}}
\theoremstyle{plain}
\newtheorem{theorem}{Theorem}
\newtheorem{lemma}[theorem]{Lemma}
\newtheorem{proposition}[theorem]{Proposition}
\newtheorem{corollary}[theorem]{Corollary}
\newtheorem*{atomlessidentification}{Atomless identification}
\theoremstyle{definition}
\newtheorem{remark}[theorem]{Remark}

\DeclareMathOperator{\sgn}{sign}
\newcommand{\R}{\mathbb R}
\newcommand{\E}{\mathbb E}
\newcommand{\1}{\mathbf 1}
\newcommand{\ts}{\tau^{*}}
\newcommand{\Ra}{\widetilde R}
\newcommand{\Da}{\widetilde D}
\newcommand{\indep}{\perp\!\!\!\perp}
\newcommand{\cc}{\mathsf c}
\newcommand{\ggap}{\mathsf g}
\newcommand{\Xc}{\mathcal X}

\title{Bergsma--Dassios sign covariance with ties:
a nonnegative decomposition and sharp bounds}
\author{Wicher Bergsma \and Angelos Dassios}
\date{}
\begin{document}
\maketitle
\begin{center}
London School of Economics and Political Science\\
Houghton Street, London WC2A 2AE, United Kingdom\\[3pt]
\small\href{mailto:w.p.bergsma@lse.ac.uk}{\texttt{w.p.bergsma@lse.ac.uk}}\quad
\href{mailto:a.dassios@lse.ac.uk}{\texttt{a.dassios@lse.ac.uk}}
\end{center}

\begin{abstract}
We construct tie-symmetrised extensions $\Da$ and $\Ra$ of the unnormalised
Hoeffding and Blum--Kiefer--Rosenblatt functionals $D$ and $R$ and prove, for
every real-valued bivariate law, the exact nonnegative decomposition
\[
 \ts(X,Y)=12\Da(X,Y)+24\Ra(X,Y),
 \qquad \Da,\Ra\geq0,
 \qquad \Ra(X,Y)=0\quad\Longleftrightarrow\quad X\indep Y.
\]
For atomless margins the components reduce to $D$ and $R$, whereas in the
presence of ties the corresponding classical identity $\ts=12D+24R$ can
fail.  The components arise from the Hoeffding projections of a symmetrised
pair kernel.  For the four
strict/non-strict boundary versions $D_{\epsilon,\eta}$ and
$R_{\epsilon,\eta}$, we establish the sharp comparisons
\[
 9\Da\geq\sum_{\epsilon,\eta}D_{\epsilon,\eta},
 \qquad
 9\Ra\geq\sum_{\epsilon,\eta}R_{\epsilon,\eta}.
\]
Consequently, $\ts\geq0$ for every bivariate law, and the
Bergsma--Dassios conjecture is settled:
$\ts=0$ if and only if $X\indep Y$.  The boundary comparisons also yield the
universal bound $\ts\geq(8/3)R$.  Finite-table arguments, including an exact
sum-of-squares certificate, establish the boundary inequalities, and
weak-order quantisation transfers them to arbitrary laws.  The factor $9$ in
each comparison and the coefficient $24$ in $\ts\geq24\Ra$ are sharp.  The
usual permutation test is consistent against every fixed dependent
alternative, without assumptions on the margins.
\end{abstract}

\noindent\textbf{MSC2020 subject classifications:} Primary 62H20; secondary 62H05, 62G10.

\noindent\textbf{Keywords and phrases:} Bergsma--Dassios sign covariance; Blum--Kiefer--Rosenblatt functional; Hoeffding's D; independence testing; rank covariance; ties.

\section{Introduction and main results}

Rank-based procedures for testing independence must account for ties when
variables are ordinal, counts, rounded measurements, or have mixed marginal
types.  Pairs of such variables can have joint laws with mixed-dimensional
support.  For instance,
an ordinal variable paired with a conditionally continuous measurement is
typically supported on parallel lines, while coordinatewise censoring of a
continuous pair can create a point atom, non-atomic boundary-line components,
and an absolutely continuous interior.  Such observed-data laws need be
neither purely atomic nor absolutely continuous with respect to
two-dimensional Lebesgue measure, nor mixtures of those two types, and their
margins need not both be atomless.  A tie-aware theory therefore covers
standard mixed measurement settings, not only exceptional singular laws; see
also \citet{GNRM} on independence testing with arbitrary margins.

For $z=(z_1,z_2,z_3,z_4)\in\R^4$, let
$S_{ij\mid kl}(z)=|z_i-z_j|+|z_k-z_l|$ denote the matching sum for
$ij\mid kl$, and put
\[
 a(z)=\sgn\{S_{12\mid34}(z)-S_{13\mid24}(z)\},
\]
where $\sgn(0)=0$.  Let $(X_i,Y_i)$, $i\geq1$, be independent copies of
$(X,Y)$.  The sign covariance $\ts$, introduced together with its associated
independence test in \citet{BD}, is
\begin{equation}\label{eq:tau-def}
 \ts(X,Y)=
 \E\{a(X_1,X_2,X_3,X_4)a(Y_1,Y_2,Y_3,Y_4)\}.
\end{equation}
The sample version $t_n^*$ is a bounded order-four rank $U$-statistic.  Its
large-sample theory is developed in \citet{NWD}.

To describe the structural issue created by ties, write
$F(x,y)=\Pr(X\leq x,Y\leq y)$ and let $F_X,F_Y$ be the marginal distribution
functions.  Let $D$ be Hoeffding's unnormalised functional~\citep{Hoeffding}
and $R$ the unnormalised Blum--Kiefer--Rosenblatt functional~\citep{BKR}:
\begin{align*}
 D(X,Y)&=\int_{\R^2}\{F(x,y)-F_X(x)F_Y(y)\}^2\,dF(x,y),\\
 R(X,Y)&=\int_{\R^2}\{F(x,y)-F_X(x)F_Y(y)\}^2
           \,dF_X(x)\,dF_Y(y).
\end{align*}
This notation follows \citet{DHS} and \citet{SDH}; \citet{BD} instead write
$H$ for our $D$ and $D$ for our $R$.  For absolutely continuous laws, the
combination $D+2R$ already appears in
\citet[proof of Prop.~9, p.~62]{Yanagimoto}.  For atomless margins,
Equation~(6.1) of \citet[p.~3225]{DHS} is the kernel identity underlying the
decomposition stated explicitly in \citet[Rem.~1, p.~322]{SDH}:
\begin{equation}\label{eq:decomp1}
 \ts=12D+24R.
\end{equation}
The identity in \citet{DHS} is pointwise on configurations without coordinate
ties and can fail otherwise, even for the tied $3\times3$ probability table
given below.  Although $D$ and $R$ themselves are defined for arbitrary
laws---and $R$ is nonnegative and vanishes exactly under independence
\citep[Thm.~2]{WDM}---they therefore do not decompose $\ts$ in general.

Our primary contribution is an exact replacement that remains valid with
atoms, ties, and singular components.  We group the four arguments of the
$a$-kernel into two pairs and use the resulting symmetric pair kernel to
define tie-symmetrised components $\Da$ and $\Ra$.  Relative to this natural
pair symmetrisation, its Hoeffding projections uniquely determine $\Ra$ and
$\Da$ as the first-order and fully degenerate second-order cross-components.
For every bivariate law,
\begin{equation}\label{eq:intro-decomposition}
 \ts=12\Da+24\Ra,
 \qquad \Da,\Ra\geq0,
 \qquad \Ra=0\ \Longleftrightarrow\ X\indep Y.
\end{equation}
For atomless margins, $\Da=D$ and $\Ra=R$.  Thus the unrestricted
independence characterization follows from a stronger structural result.
The related conditional-covariance decomposition for Kendall's tau in
Subsection~\ref{subsec:monotone-association} also shows that projection
orthogonality alone does not imply componentwise positivity.

\citet{BD} proved that $\ts\geq0$, with equality if and only if
$X\indep Y$, when the joint law is discrete, absolutely continuous, or a
mixture of the two, and conjectured the conclusion for every bivariate law
\citep[Thm.~1 and the paragraph following it, p.~1010]{BD}.
\citet[Thm.~6.1]{DHS} subsequently proved it under the assumption of atomless
margins.  A recent preprint by \citet{GrunewaldHuang} gives a different proof
of the unrestricted characterization using labelled path trees and quartet
covariance, and obtains the bound $\ts\geq2R$.  The distinguishing object here
is the universal nonnegative decomposition
\eqref{eq:intro-decomposition} and the quantitative theory it supports.

In particular, we compare $\Da$ and $\Ra$ with all four strict/non-strict
boundary versions $D_{\epsilon,\eta}$ and $R_{\epsilon,\eta}$ defined in
\eqref{eq:D-boundary}--\eqref{eq:R-boundary}.  We prove the sharp inequalities
\[
 9\Da\geq\sum_{\epsilon,\eta\in\{<,\leq\}}D_{\epsilon,\eta},
 \qquad
 9\Ra\geq\sum_{\epsilon,\eta\in\{<,\leq\}}R_{\epsilon,\eta}.
\]
Since $R_{\leq,\leq}=R$, they yield the stronger universal comparison
\[
 \ts\geq24\Ra
 \geq\frac83\sum_{\epsilon,\eta\in\{<,\leq\}}R_{\epsilon,\eta}
 \geq\frac83R.
\]
The factor $9$ in each boundary comparison and the coefficient $24$ relative
to $\Ra$ are best possible; optimality of the displayed coefficient $8/3$
relative to $R$ is not asserted.  Finite-table arguments, including an exact
sum-of-squares certificate, establish the inequalities, which are then
transferred to arbitrary laws by weak-order quantisation.  We also obtain sharp
quadratic bounds involving Kendall's tau and Spearman's rho, tie-aware range
results, and a projection-averaged independence characterization for random
vectors.

The universal zero-set conclusion also makes the usual permutation test based
on $t_n^*$ consistent against every fixed dependent alternative, without
continuity or moment assumptions.  Finite-sample validity and consistency are
stated after Theorem~\ref{thm:main} and proved in Section~2 of the
Supplementary Material~\citep{Supplement}.

The sign covariance belongs to the general framework of symmetric rank
covariances developed in \citet{WDM}.  In the atomless setting, its
eight-pattern characterization has also been studied through permutons and
quasirandom permutations \citep{Chan}.  A distinct quasirandomness-forcing
linear combination of six patterns of lengths three and four is given by
\citet{CrudeleDukesNoel}; it likewise determines independence for atomless
bivariate laws.  Neither permuton approach treats ties.  Pattern-based tests
for two-dimensional copulas are developed by \citet{BaringhausGrubel2025},
and recent efficiency results are given by \citet{BaringhausGrubel}; see
\citet{SDH} for a comparison of $D$, $R$ and $\ts$ that discusses ties.  The
present paper instead develops an exact tie-aware decomposition of the
original sign covariance, together with positivity, sharp bounds, and its
zero set.

\medskip
\noindent\emph{Normalisation.}
The coefficients $12$ and $24$ use the unnormalised convention for $D$ and
$R$ of \citet{DHS} and \citet{SDH}.  In the source-kernel normalisation of
\citet{DHS}, the kernels $h_D$, $h_R$ and $h_{\tau^*}$ satisfy
$\E h_D=30D$, $\E h_R=90R$ and $\E h_{\tau^*}=\tfrac32\ts$.  Taking
expectations in their identity gives
\[
 \E h_{\tau^*}=\frac35\E h_D+\frac25\E h_R,
\]
which is equivalent to~\eqref{eq:decomp1}.  Unit-maximum scales and the effect
of ties are considered in
Subsection~\ref{subsec:ranges-normalisation}.

\subsection{Tie-symmetrised components and decomposition}

We first define
\begin{equation}\label{eq:Ra-def}
 \boxed{\qquad
 \Ra(X,Y)=\frac19\E\{a(X_1,X_2,X_3,X_4)
          a(Y_1,Y_2,Y_5,Y_6)\}.
 \qquad}
\end{equation}
For structural purposes, let $U,V$ be independent copies of a real random
variable $Z$ and introduce the auxiliary kernel
\[
 k_Z(z,z')=-\frac13\E\{a(z,z',U,V)\}
 =\frac13\E\{a(z,U,z',V)\}.
\]
The equality follows from antisymmetry under interchange of the second and
third slots; the same antisymmetry and exchangeability show that $k_Z$ is
centred: $\E\{k_Z(Z_1,Z_2)\}=0$.  The sign and normalisation are chosen so
that, in the atomless case, $k_Z$ agrees
$\mathcal L(Z)^{\otimes2}$-almost everywhere with the centred
Cram\'er--von Mises kernel; see \eqref{eq:k-atomless}.
Conditioning \eqref{eq:Ra-def} on the first two joint observations gives, for
every bivariate law,
\[
 \Ra(X,Y)=\E\{k_X(X_1,X_2)k_Y(Y_1,Y_2)\}.
\]
The following statement shows that $\Ra$ is an $a$-kernel analogue of $R$ that
agrees with it for atomless marginals.

\begin{atomlessidentification}
If both marginal laws of $(X,Y)$ are atomless, then $\Ra(X,Y)=R(X,Y)$.
\end{atomlessidentification}

\begin{proof}
Define the centred Cram\'er--von Mises kernel
\[
 \kappa_Z(z,z')=\int_{\R}
 \{\1(z\leq t)-F_Z(t)\}\{\1(z'\leq t)-F_Z(t)\}\,dF_Z(t).
\]
If $F_Z$ is continuous, a probability-integral-transform calculation,
detailed in Section~5 of the Supplementary Material~\citep{Supplement}, gives, for
$\mathcal L(Z)^{\otimes2}$-almost every $(z,z')$,
\begin{equation}\label{eq:k-atomless}
 k_Z(z,z')=\frac{F_Z(z)^2+F_Z(z')^2}{2}
 -\max\{F_Z(z),F_Z(z')\}+\frac13=\kappa_Z(z,z').
\end{equation}
Consequently, Fubini's theorem and
independence of copies give
\begin{align*}
 \Ra(X,Y)
 &=\E\{\kappa_X(X_1,X_2)\kappa_Y(Y_1,Y_2)\}\\
 &=\int_{\R^2}\!\left[\E\{(\1(X\leq x)-F_X(x))
       (\1(Y\leq y)-F_Y(y))\}\right]^2dF_X(x)dF_Y(y)\\
 &=R(X,Y).\qedhere
\end{align*}
\end{proof}

We next construct $\Da$, the other tie-symmetrised term in the decomposition of
$\ts$.  For $Z=X$ or $Y$ and distinct indices $i,j,k,l$, write
\[
 S^Z_{ij\mid kl}=|Z_i-Z_j|+|Z_k-Z_l|,
 \qquad
 a_Z(ijkl)=\sgn(S^Z_{ij\mid kl}-S^Z_{ik\mid jl}),
\]
and define the symmetric pair contrast
\[
 \delta_Z(ijkl)=a_Z(ijkl)-a_Z(iljk)
 =\sgn(S^Z_{ij\mid kl}-S^Z_{ik\mid jl})
   +\sgn(S^Z_{ij\mid kl}-S^Z_{il\mid jk}).
\]
Subsection~\ref{subsec:projection-structure} verifies
\begin{equation}\label{eq:delta-products}
 \E\{\delta_X(1234)\delta_Y(1234)\}=3\ts,
 \qquad
 \E\{\delta_Z(1234)\mid Z_1,Z_2\}=-6k_Z(Z_1,Z_2).
\end{equation}

Define the normalised degenerate projection by
\begin{equation}\label{eq:ell-def}
 \ell_Z(1234)=\frac16\delta_Z(1234)
   +k_Z(Z_1,Z_2)+k_Z(Z_3,Z_4).
\end{equation}
For arbitrary marginal laws, define
\begin{equation}\label{eq:Da}
 \boxed{\qquad
 \Da(X,Y)=\E\{\ell_X(1234)\ell_Y(1234)\}.
 \qquad}
\end{equation}

For later use, define the four-block rectangular contrast
\begin{equation}\label{eq:K-rectangular}
 H_Z=\frac1{12}\{
   \delta_Z(1234)-\delta_Z(1256)
   -\delta_Z(3478)+\delta_Z(5678)\}.
\end{equation}
The projection calculation below proves the equivalent eight-copy
representation
\begin{equation}\label{eq:Da-rectangular}
 \Da(X,Y)=\E\{H_XH_Y\}.
\end{equation}
The same calculation gives the central identity
\begin{equation}\label{eq:tau-decomposition}
 \boxed{\qquad \ts(X,Y)=12\Da(X,Y)+24\Ra(X,Y).\qquad}
\end{equation}

\begin{atomlessidentification}
If both marginal laws of $(X,Y)$ are atomless, then $\Da(X,Y)=D(X,Y)$.
\end{atomlessidentification}

\begin{proof}
The earlier atomless identification of $\Ra$ gives $\Ra=R$.  Comparing
\eqref{eq:tau-decomposition} with the classical atomless decomposition
\eqref{eq:decomp1} therefore gives $\Da=D$.
\end{proof}

\smallskip
\noindent\emph{A tied $3\times3$ example.}
Let $(X,Y)$ have probability table
\[
 P=\frac13\begin{pmatrix}1&0&0\\0&1&0\\0&0&1\end{pmatrix}.
\]
Direct calculation gives
\[
 \ts=\frac{28}{81},\qquad D=\frac8{243},\qquad R=\frac{10}{729},
 \qquad 12D+24R=\frac{176}{243}\ne\ts,
\]
whereas
\[
 \Da=\frac{73}{6561},\qquad \Ra=\frac{58}{6561},
 \qquad 12\Da+24\Ra=\frac{28}{81}=\ts.
\]
Thus the classical atomless decomposition can fail under ties, while the
tie-symmetrised decomposition remains exact.

Exchangeability and the centring of $k_Z$ give
$\E\{a_Z(1234)\}=\E\{\ell_Z(1234)\}=0$.  Thus $\ts=\Ra=\Da=0$ under
independence, whereas nonnegativity of either $\Ra$ or $\Da$ is not immediate
from their definitions.

\subsection{Main results}

\begin{theorem}\label{thm:main}
For every bivariate law on $\R^2$:
\begin{enumerate}[label=\textup{(\roman*)},leftmargin=2.2em,itemsep=2pt,topsep=2pt]
\item $\Da(X,Y)\geq0$, $\Ra(X,Y)\geq0$, and the exact decomposition
\eqref{eq:tau-decomposition} holds.  Consequently, $\ts(X,Y)\geq0$.
\item Under independence, $\Da=\Ra=0$. Moreover, 
\[
 \boxed{\qquad
 \Ra(X,Y)=0\quad\Longleftrightarrow\quad X\indep Y
 \quad\Longleftrightarrow\quad \ts(X,Y)=0.
 \qquad}
\]
\item If both marginal laws are atomless, then $\Da=D$ and $\Ra=R$, in the
unnormalised conventions used here.
\item The equivalence $\Da(X,Y)=0\Longleftrightarrow X\indep Y$ holds when the
joint law is either purely atomic or absolutely continuous with respect to
two-dimensional Lebesgue measure.  In general it can fail, even when both
margins are uniform.
\item Every dependent purely atomic law satisfies $\ts>24\Ra$.  Nevertheless,
$24$ is the best constant in the universal bound $\ts\geq24\Ra$: there is a
sequence of dependent finite probability tables along which $\ts/\Ra\to24$.
\end{enumerate}
\end{theorem}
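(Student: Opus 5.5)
The proof of Theorem~\ref{thm:main} runs in five stages, taking the parts roughly in order but deriving (ii) from (i) together with the boundary comparisons.

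\emph{Stage 1: the exact decomposition.} I would establish the Hoeffding (ANOVA) structure of the symmetrised pair kernel $\delta_Z(1234)$, viewed as a function of the two pairs $(Z_1,Z_2)$ and $(Z_3,Z_4)$. By \eqref{eq:delta-products}, the first-order projections onto each pair are $-6k_Z(Z_1,Z_2)$ and $-6k_Z(Z_3,Z_4)$. Hence
\[
\delta_Z=-6k_Z(Z_1,Z_2)-6k_Z(Z_3,Z_4)+6\ell_Z ,
\]
where $\ell_Z$ is degenerate: its conditional mean given either pair vanishes. I would check this using antisymmetry and exchangeability, which also give the centring of $k_Z$. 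Under the joint law the pairs $(X_i,Y_i)$ are i.i.d., so the cross-products of the projection components with $Y$ are orthogonal in the usual $U$-statistic sense. This gives
\[
3\ts=\E\delta_X\delta_Y=36\bigl(2\Ra\bigr)+36\Da ,
\]
that is, $\ts=24\Ra+12\Da$, which is \eqref{eq:tau-decomposition}. Expanding $H_Z$ in \eqref{eq:K-rectangular} along the same components, the first-order pieces cancel in the rectangular alternating sum, and the degenerate pieces are orthogonal across disjoint blocks. This yields \eqref{eq:Da-rectangular}.

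\emph{Stage 2: nonnegativity, which I expect to be the main obstacle.} For $\Ra$, I would seek a representation $k_Z(z,z')=\int \phi_t(z)\phi_t(z')\,\mu(dt)$ with centred features $\phi_t$, a tie-symmetrised Cram\'er--von Mises structure (for instance mid-distribution indicators $\1(z<t)+\1(z\le t)$ averaged suitably). Then $\Ra=\int\!\int(\mathrm{Cov}\{\phi_s(X),\psi_t(Y)\})^2\,\mu(ds)\,\nu(dt)\ge0$. The paper instead announces the route through $9\Ra\ge\sum R_{\epsilon,\eta}$ and $9\Da\ge\sum D_{\epsilon,\eta}$, so I would follow it. First prove these inequalities for finite probability tables by expressing $9\Ra-\sum R_{\epsilon,\eta}$ (and the analogue for $\Da$) as quadratic forms in the table entries, and exhibiting a sum-of-squares certificate. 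Then transfer to arbitrary laws by weak-order quantisation: discretise each margin into finitely many cells that preserve tie/order relations, and use bounded convergence for the bounded kernels together with convergence of the boundary functionals. Since each $D_{\epsilon,\eta}\ge0$ and $R_{\epsilon,\eta}\ge0$, this gives (i) and $\ts\ge0$. The hard part is finding the certificate.

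\emph{Stage 3: the zero set.} Independence makes every kernel expectation factor, so $\Da=\Ra=\ts=0$. Conversely, $\Ra=0$ forces $R=R_{\le,\le}=0$, hence independence by \citet[Thm.~2]{WDM}. If $\ts=0$, then (i) gives $\Ra=0$, and so independence follows. Part (iii) is the pair of atomless identifications already proved.

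\emph{Stage 4: part (iv).} In the atomic case, $\Da\ge\frac19\sum D_{\epsilon,\eta}\ge\frac19 D$. Since $D$ integrates against the joint law, $D=0$ forces $F=F_XF_Y$ at every atom, and on a purely atomic law this yields independence. In the absolutely continuous case the margins are atomless, so $\Da=D$, and Hoeffding's theorem applies. For a counterexample I would take a singular law with uniform margins supported on a set of measure zero where $F-F_XF_Y$ vanishes, for example a suitable rearrangement of mass on segments, and check $D=0$ while the law is dependent.

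\emph{Stage 5: part (v).} For an atomic dependent law, $\Da>0$ by (iv), and therefore $\ts>24\Ra$. For sharpness I would take tables $(1/n^2+\varepsilon_n\,u_iv_j)$ approximating a smooth copula perturbation on a grid whose mesh tends to zero. In the limit this approaches an absolutely continuous near-independence law with $D/R\to0$: tables built as perturbations concentrated at fine scale make $D$ of higher order than $R$. That gives $\ts/\Ra\to24$.
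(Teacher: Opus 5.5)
Your Stage~1 and the quantisation transfer match the paper. However, the proposal has one central gap and two steps that fail as written.

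\textbf{The central gap: Stage~2.} Nonnegativity of $\Da$ and $\Ra$, and the inequality $9\Ra\ge\sum_{\epsilon,\eta}R_{\epsilon,\eta}\ge R$ on which your Stage~3 rests, are the substance of the theorem. Saying ``exhibit a sum-of-squares certificate'' leaves them unproved. The missing idea is the signed cut--gap representation of $a$. It yields $9\Ra=\operatorname{tr}\{K(r)\Xc K(c)\Xc^{\mathsf T}\}$, where $K(w)=s^2I-G(w)^{\mathsf T}G(w)\succeq s\operatorname{diag}(w_b+w_{b+1})$. This is also what makes your feature-map idea rigorous: for finite support, $k_Z(i,j)=\tfrac13\pi(i)^{\mathsf T}K(w)\pi(j)$ with $\pi(i)=(\pi^w_{\cc_b}(i))_b$, so the features are $K(w)^{1/2}\pi(i)$ rather than mid-distribution indicators. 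For $\Da$ you further need the gap--gap correction $9\Da=9\Ra+3\mathcal E$. Because $\mathcal E$ can be negative ($-1/729$ for a cyclic $3\times3$ permutation table), it cannot be bounded term by term. The paper regroups it using the row-block shift identities and the three-block opposite-shift pairing, and closes with $\ell^{-1}+m_0^{-1}+u^{-1}-6\ge3$.

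\textbf{Stage~4, atomic case.} You drop three of the four boundary terms, and the argument then breaks. $D=D_{\le,\le}=0$ gives $F=F_XF_Y$ only at atoms of the joint law, and that does not imply independence. The dependent table $\tfrac12\left(\begin{smallmatrix}0&1\\1&0\end{smallmatrix}\right)$ has $D=p_{11}\theta^2=0$. Keep all four terms. Then $D_{\epsilon,\eta}=0$ for every $\epsilon,\eta$ gives, by inclusion--exclusion, $p_{xy}=r_xc_y$ on the support $S$. Finally, $1=\sum_S r_xc_y\le\sum_{A\times B}r_xc_y=1$ forces $S=A\times B$. For the absolutely continuous case, note that Hoeffding's own theorem assumes continuous densities; arbitrary absolutely continuous laws need Yanagimoto's Prop.~3.

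\textbf{Stage~5, sharpness.} Your proposed family fails outright. For $p_{ij}=n^{-2}+\varepsilon_nu_iv_j$, the matrix $P-rc^{\mathsf T}$, and hence $\Xc$, has rank one. Then $\xi_a=W(x_a)x_{a+1}=0$, so $\mathcal E=0$, $\Da=\Ra$, and $\ts=36\Ra$ exactly for every member. More generally, perturbing the product law by $\varepsilon h$ with fixed $h$ gives $D-R=\int\Delta^2\,d(\varepsilon h)=O(\varepsilon^3)$, while $R$ is of order $\varepsilon^2$. So near-independence drives the ratio to $36$, not $24$. To approach $24$ you need a dependent law with $\Da=0$, which by (iv) is neither atomic nor absolutely continuous. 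The paper takes Yanagimoto's singular law with uniform margins and $D=0$. This is also the explicit counterexample your Stage~4 only gestures at. For that law $\ts=24\Ra$ exactly, and weak-order quantisation then gives finite tables with $\Da^{(k)}\to0$ and $\Ra^{(k)}\to\Ra>0$.
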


The general $\Da$ counterexample in Theorem~\ref{thm:main} is Yanagimoto's
law, recalled in the sharpness argument below.

\medskip
\noindent\emph{Statistical consequence.}
For every $n\geq4$, the usual nonrandomised upper-tail permutation test based
on $t_n^*$ has level at most $\alpha$ under independence.  Under every fixed
dependent alternative, its rejection probability tends to one, with no
restriction on the margins.  Finite-sample validity and consistency are
proved in Section~2 of the Supplementary Material~\citep{Supplement}.

We establish stronger quantitative comparisons with all four boundary
versions of the Hoeffding and product-margin Blum--Kiefer--Rosenblatt
functionals.  Put
\[
 \iota_{<}(u,t)=\1\{u<t\},
 \qquad
 \iota_{\leq}(u,t)=\1\{u\leq t\}.
\]
For $\epsilon,\eta\in\{<,\leq\}$, define
\[
\begin{aligned}
 F_{\epsilon,\eta}(x,y)
   &=\E\{\iota_\epsilon(X,x)\iota_\eta(Y,y)\},\quad
 F_{X,\epsilon}(x)=\E\{\iota_\epsilon(X,x)\},\qquad
 F_{Y,\eta}(y)=\E\{\iota_\eta(Y,y)\},\\
 \Delta_{\epsilon,\eta}(x,y)
   &=F_{\epsilon,\eta}(x,y)-F_{X,\epsilon}(x)F_{Y,\eta}(y),
\end{aligned}
\]
and define
\begin{align}
 D_{\epsilon,\eta}(X,Y)
 &=\E\{\Delta_{\epsilon,\eta}(X,Y)^2\},
 \label{eq:D-boundary}\\
 R_{\epsilon,\eta}(X,Y)
 &=\int_{\R^2}\Delta_{\epsilon,\eta}(x,y)^2
       \,P_X(dx)P_Y(dy).
 \label{eq:R-boundary}
\end{align}
In particular, $D_{\leq,\leq}=D$ and $R_{\leq,\leq}=R$ for every law.  All
four $D$-functionals agree with $D$, and all four $R$-functionals agree with
$R$, when both marginals are atomless.

\begin{theorem}[Four-boundary strengthening]\label{thm:four-boundary}
For every bivariate law on $\R^2$, with both sums below taken over
$\epsilon,\eta\in\{<,\leq\}$,
\begin{equation}\label{eq:four-boundary-main}
 \boxed{\quad
 9\Da(X,Y)\geq\sum_{\epsilon,\eta}D_{\epsilon,\eta}(X,Y),\qquad
 9\Ra(X,Y)\geq\sum_{\epsilon,\eta}R_{\epsilon,\eta}(X,Y).
 \quad}
\end{equation}
Consequently,
\[
 \boxed{\qquad
 \ts(X,Y)\geq24\Ra(X,Y)+\frac43
 \sum_{\epsilon,\eta\in\{<,\leq\}}D_{\epsilon,\eta}(X,Y)
 \geq24\Ra(X,Y)\geq0.
 \qquad}
\]
The constant $9$ in each inequality in \eqref{eq:four-boundary-main} is best
possible.
\end{theorem}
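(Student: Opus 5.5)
The plan is to follow the route the abstract announces: prove both inequalities in \eqref{eq:four-boundary-main} first for finitely supported laws (finite probability tables), then pass to arbitrary laws by weak-order quantisation, and finally read off the consequences from the decomposition \eqref{eq:tau-decomposition}.

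\emph{Finite tables.} Let $(X,Y)$ take values in a grid $\{1,\dots,m\}\times\{1,\dots,n\}$ with probability table $P=(p_{ij})$ and margins $p_{i+}$, $p_{+j}$. Both $\Ra$ and $\Da$ are quadratic forms in the dependence matrix $Q=P-p_{\cdot+}p_{+\cdot}^{\top}$.
\begin{itemize}
\item For $\Ra$: $\Ra=\E\{k_X(X_1,X_2)k_Y(Y_1,Y_2)\}=\sum q_{ij}q_{i'j'}K_X(i,i')K_Y(j,j')$. Here $K_X$, $K_Y$ are the matrices of $k_X$, $k_Y$ on atoms. They are computable from cumulative sums of the margins, since $a(z,U,z',V)$ depends only on the order pattern of $z,z',U,V$, with ties handled by $\sgn(0)=0$.
\item The right side $\sum_{\epsilon,\eta} R_{\epsilon,\eta}$ is also of the form $\sum q_{ij}q_{i'j'}M_X(i,i')M_Y(j,j')$. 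Here $M_X=\sum_{\epsilon}C_\epsilon^{\top}\mathrm{diag}(p_{\cdot+})C_\epsilon$, built from the strict and non-strict cumulative indicator matrices, and similarly for $M_Y$.
\item So it suffices to show that $9K_X-M_X$ is positive semidefinite on the centred subspace. A Kronecker product of two PSD matrices, one of which dominates, then gives $9K_X\otimes K_Y\succeq M_X\otimes M_Y$, since $K_Y\succeq M_Y/9\succeq0$ by the same one-dimensional fact and $(9K_X-M_X)\otimes K_Y+M_X\otimes(9K_Y-M_Y)/9$ is PSD. This factors the $\Ra$ inequality into a purely univariate matrix inequality in the weights $p_{1+},\dots,p_{m+}$.
\item I expect that inequality to be provable by writing $9K_X-M_X$ as a sum of rank-one terms built from the atom masses. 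A plausible mechanism is that tied pairs contribute $\frac19\sum_i p_i^3$-type corrections that are absent in $M_X$.
\end{itemize}

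\emph{The $\Da$ inequality.} This is the main obstacle, because $\Da$ involves the fully degenerate four-argument kernel $\ell$, and $D_{\epsilon,\eta}$ is integrated against the joint law rather than the product. It is therefore not a tensor-product quadratic form in $Q$ alone, but a form in $P$ coupled to $Q$. Using the rectangular representation \eqref{eq:Da-rectangular}, I would write $\Da=\E\{H_XH_Y\}$ as $\sum_{\text{cells}} p_{ij}\,G_{ij}(Q)$, where $G_{ij}$ is a quadratic form in $Q$ depending on the position of cell $(i,j)$. Similarly $\sum D_{\epsilon,\eta}=\sum p_{ij}\sum_{\epsilon,\eta}\Delta_{\epsilon,\eta}(i,j)^2$. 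One would then aim for a cellwise bound $9G_{ij}\ge\sum_{\epsilon,\eta}\Delta_{\epsilon,\eta}(i,j)^2$. This probably fails cellwise, in which case I would look for an exact sum-of-squares certificate. Concretely, I would express $9\Da-\sum D_{\epsilon,\eta}$ as $\sum_{\text{cells}} p_{ij}\times(\text{squares of linear functionals in cumulative sums})$ plus products of masses times squares. I would first find the certificate on small tables, say $2\times2$ and $3\times3$, by computer algebra, and then guess the general pattern in terms of the averaged boundary functions $\tfrac12(F_{<}+F_{\leq})$ and the atom masses.

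\emph{Transfer and consequences.} For a general law, I would quantise each coordinate by a finite partition into intervals whose endpoints are chosen to respect atoms, so that ties are preserved and the weak order is kept. As the partition is refined, $\Ra$, $\Da$ and the eight boundary functionals all converge, since they are expectations of bounded functions of order patterns and indicators that converge pointwise except on null sets; this uses dominated convergence. The inequalities then pass to the limit. The consequence $\ts\ge 24\Ra+\tfrac43\sum D_{\epsilon,\eta}$ is immediate from \eqref{eq:tau-decomposition}, since $12\Da\ge\tfrac{12}{9}\sum D_{\epsilon,\eta}$; nonnegativity follows because the $D_{\epsilon,\eta}$ are nonnegative. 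For sharpness of $9$, I would use a sequence of $2\times2$ tables with vanishing dependence and extreme margins, say one atom of mass $\to1$, and compute the leading-order ratios from the small-table formulas.
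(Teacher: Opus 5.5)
There are genuine gaps in both halves. In the $\Ra$ half your tensor architecture is right; it is what lies behind the paper's $9\Ra(P)=\operatorname{tr}\{K(r)\Xc K(c)\Xc^{\mathsf T}\}$. However, the constant in your reduction is wrong, and the one substantive step is left unproved. Expanding your combination gives $(9K_X-M_X)\otimes K_Y+\tfrac19M_X\otimes(9K_Y-M_Y)=9K_X\otimes K_Y-\tfrac19M_X\otimes M_Y$, so $9K_X\succeq M_X$ would yield only $81\Ra\geq\sum_{\epsilon,\eta}R_{\epsilon,\eta}$. What you need is $3K_X\succeq M_X$ on $\{u:\sum_iu_i=0\}$, after which $(3K_X-M_X)\otimes 3K_Y+M_X\otimes(3K_Y-M_Y)\succeq0$ finishes the argument. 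This inequality has no slack. For a two-point margin, $K_X=\tfrac13ww^{\mathsf T}$ with $w=(r_2,-r_1)^{\mathsf T}$, and for $u=(t,-t)^{\mathsf T}$ both $3u^{\mathsf T}K_Xu$ and $u^{\mathsf T}M_Xu$ equal $t^2$. That univariate inequality is the entire content of the $\Ra$ bound, and you only say you expect it to hold. In cumulative coordinates $x_b=\sum_{i\leq b}u_i$ one has $3u^{\mathsf T}K_Xu=x^{\mathsf T}K(r)x$ with $K(r)=I-G(r)^{\mathsf T}G(r)$, and $u^{\mathsf T}M_Xu=\sum_b(r_b+r_{b+1})x_b^2$. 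The paper obtains this form from the signed cut--gap representation of $a$ (Lemma~\ref{lem:cut-gap}) and proves the bound via the exact identity \eqref{eq:weighted-squares}. Its excess, $\sum_b[L_b(r)U_b(r)(x_b-x_{b+1})^2+r_{b+1}\{U_b(r)x_b^2+L_b(r)x_{b+1}^2\}]$, is not a $\sum_ip_i^3$-type tie correction.

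The $\Da$ half, which is the harder one, contains no argument. Searching small tables by computer algebra and guessing a pattern is a research plan, and you give no reason why a certificate with constant $9$ should exist for all $m\times n$. Your ansatz is also too narrow. With the margins fixed, $\sum_{ij}p_{ij}G_{ij}(Q)$ with $G_{ij}$ quadratic in $Q$ is at most cubic in $Q$. But for $m,n\geq3$, $9\Da=9\Ra+3\mathcal E(P)$ contains the quartic term $3\sum_a\|W(x_a)x_{a+1}\|_2^2$. It comes from gap--gap determinants whose column margins $c-p_{a+1,\boldsymbol\cdot}$ themselves depend on $Q$. Nor does that correction have a sign: $\mathcal E(P)=-1/729$ for a $3\times3$ cyclic permutation table.

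The missing ideas are the following. (i) Expand $\ts$ through the cut--gap representation into squared collapsed $2\times2$ determinants, which isolates the gap--gap correction. (ii) Introduce the adjacent-row energy $J(P)=\sum_ax_a^{\mathsf T}K(\nu^{(a)})x_a/t_a$, with $\nu^{(a)}=p_{a,\boldsymbol\cdot}+p_{a+1,\boldsymbol\cdot}$ and $t_a=r_a+r_{a+1}$. It dominates $\sum_{\epsilon,\eta}D_{\epsilon,\eta}$ by the univariate lemma applied to the pooled rows $a,a+1$, and this is how the joint weights $p_{ab}+p_{a+1,b}+p_{a,b+1}+p_{a+1,b+1}$ are absorbed. (iii) Use the row-block shift identities $G(p_{i,\boldsymbol\cdot})=r_iG(c)-W(x_i-x_{i-1})$, which make the quartic terms $\xi_a=W(x_a)x_{a+1}$ explicit. (iv) Prove $9\Da\geq J(P)$ by the three-block opposite-shift pairing (Lemma~\ref{lem:opposite-shift-pairing}) combined with AM--HM on the three row-block masses.

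Your transfer and sharpness steps are essentially the paper's, with two refinements. First, the quantities must be written as expectations of law-free weak-order kernels. For $\Da$ this means the eight-copy $\E\{H_XH_Y\}$, since $\ell_Z$ contains the law-dependent $k_Z$; for $D_{\epsilon,\eta}$ and $R_{\epsilon,\eta}$ it means five- and six-copy products. Convergence then comes from exact eventual stabilisation of each finite sample's weak order, not from continuity. Second, sharpness needs no limit: every dependent $2\times2$ table already gives $9\Ra=9\Da=\sum_{\epsilon,\eta}D_{\epsilon,\eta}=\sum_{\epsilon,\eta}R_{\epsilon,\eta}=\theta^2$, where $\theta=p_{11}p_{22}-p_{12}p_{21}$.
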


\subsection{Projection structure and proof of the decomposition}
\label{subsec:projection-structure}

\begin{proof}[Verification of \eqref{eq:delta-products}]
For a fixed quadruple $z$, write $ij\parallel kl$ when the pairs $(z_i,z_j)$
and $(z_k,z_l)$ are strictly separated, that is, when
$\max\{z_i,z_j\}<\min\{z_k,z_l\}$ or vice versa.  Direct inspection gives the
exhaustive case table
\[
\begin{array}{c|c@{\qquad}c@{\qquad}c@{\qquad}c}
 &12\parallel34&13\parallel24&14\parallel23&\text{none}\\ \hline
 a_Z(1234)&-1&1&0&0\\
 a_Z(1423)& 1&0&-1&0
\end{array}
\]
For $Z=X,Y$, let $E_1^Z$, $E_2^Z$ and $E_3^Z$ denote, respectively, the
events in the first three columns of the table.  Reading off the nonzero
products gives
\[
 \E\{a_X(1234)a_Y(1423)\}
 =-\Pr(E_1^X\cap E_1^Y)+\Pr(E_1^X\cap E_3^Y)
  +\Pr(E_2^X\cap E_1^Y)-\Pr(E_2^X\cap E_3^Y).
\]
By exchangeability, $p_d=\Pr(E_i^X\cap E_i^Y)$ does not depend on $i$, and
$p_o=\Pr(E_i^X\cap E_j^Y)$ does not depend on distinct $i,j$.  Therefore the
last display equals $p_o-p_d$, whereas the same table gives
$\ts=2(p_d-p_o)$.  Hence
\[
 \E\{a_X(1234)a_Y(1423)\}=-\frac{\ts}{2}.
\]
Expanding the two contrasts and using exchangeability once more gives
\[
 \E\{\delta_X(1234)\delta_Y(1234)\}
 =2\ts-2(-\ts/2)=3\ts.
\]
This proves the first identity in \eqref{eq:delta-products}.
The conditional identity in \eqref{eq:delta-products} follows immediately
from the two representations of $k_Z$.
\end{proof}

The decomposition now follows from an elementary pair-kernel projection
identity, which also yields the rectangular representation.

\begin{lemma}[pair-kernel projection identity]\label{lem:pair-projection}
Let $A_1,A_2,A_3,A_4$ be independent and identically distributed random
elements, and let $f$ and $g$ be square-integrable, symmetric, mean-zero
kernels of two arguments.
Define
\[
 f_1(a)=\E\{f(a,A_2)\},
 \qquad
 f_2(a,b)=f(a,b)-f_1(a)-f_1(b),
\]
and define $g_1,g_2$ analogously.  Then
\begin{equation}\label{eq:proj-ident1}
 \E\{f(A_1,A_2)g(A_1,A_2)\}
 =2\E\{f_1(A_1)g_1(A_1)\}
  +\E\{f_2(A_1,A_2)g_2(A_1,A_2)\}.
\end{equation}
\begin{samepage}
Moreover, if
\begin{equation}\label{eq:rect-contrast}
   \mathcal R_f=f(A_1,A_2)-f(A_1,A_3)-f(A_2,A_4)+f(A_3,A_4),
\end{equation}
and $\mathcal R_g$ is defined analogously, then
\[
 \E(\mathcal R_f\mathcal R_g)
 =4\E\{f_2(A_1,A_2)g_2(A_1,A_2)\}.
\]
The decompositions into first-order terms and a fully degenerate second-order
term are unique up to almost-sure equality.
\end{samepage}
\end{lemma}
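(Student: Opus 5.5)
The plan is to use the standard Hoeffding (ANOVA) decomposition of a symmetric mean-zero kernel of two i.i.d.\ arguments and read off both identities from orthogonality of the components. Write $f=f_1(a)+f_1(b)+f_2(a,b)$. Because $f$ is mean zero, we have $\E f_1(A_1)=\E f(A_1,A_2)=0$. Also $\E\{f_2(a,A_2)\}=f_1(a)-f_1(a)-\E f_1(A_2)=0$, so $f_2$ is fully degenerate. The same holds for $g$.

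\emph{First identity.} Expand
\[
\E\{f(A_1,A_2)g(A_1,A_2)\}
=\E\bigl[\{f_1(A_1)+f_1(A_2)+f_2\}\{g_1(A_1)+g_1(A_2)+g_2\}\bigr]
\]
into nine terms. The diagonal first-order terms give $2\E\{f_1(A_1)g_1(A_1)\}$. The terms $\E\{f_1(A_1)g_1(A_2)\}$ factor by independence and vanish because both factors have mean zero. Each mixed term such as $\E\{f_1(A_1)g_2(A_1,A_2)\}$ vanishes: condition on $A_1$ and use $\E\{g_2(A_1,A_2)\mid A_1\}=0$. What remains is $\E(f_2g_2)$, which gives \eqref{eq:proj-ident1}. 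Square-integrability of $f$ and $g$ gives $f_1,f_2\in L^2$ by Jensen, so all the products are integrable.

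\emph{Rectangular contrast.} Substitute the decomposition into $\mathcal R_f$. The first-order parts telescope:
\[
f_1(A_1)+f_1(A_2)-f_1(A_1)-f_1(A_3)-f_1(A_2)-f_1(A_4)+f_1(A_3)+f_1(A_4)=0.
\]
Hence
\[
\mathcal R_f=f_2(A_1,A_2)-f_2(A_1,A_3)-f_2(A_2,A_4)+f_2(A_3,A_4),
\]
and similarly for $\mathcal R_g$. Expand $\E(\mathcal R_f\mathcal R_g)$ into sixteen terms $\pm\E\{f_2(P)g_2(Q)\}$, where $P,Q$ range over the index pairs $\{12\},\{13\},\{24\},\{34\}$.
\begin{itemize}
\item If $P\neq Q$, the pairs share at most one index. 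Some index then appears only in one factor. Conditioning on all the other variables and using degeneracy (together with symmetry of $f_2,g_2$) kills the term.
\item If $P=Q$, the term is $\E(f_2g_2)$ with sign $(\pm1)^2=+1$, and there are four such terms.
\end{itemize}
This gives $4\E(f_2g_2)$.

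\emph{Uniqueness.} Suppose $f=\phi(a)+\phi(b)+\psi(a,b)$ with $\psi$ symmetric and degenerate and $\E\phi=0$. Taking $\E\{\,\cdot\mid A_1=a\}$ gives $f_1(a)=\phi(a)$ almost surely, and then $\psi=f_2$ almost surely.

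The only delicate point is the bookkeeping in the sixteen-term expansion. One has to check that each off-diagonal pair genuinely leaves a lone index, which it does because the four pairs form a $4$-cycle $1\!-\!2\!-\!4\!-\!3\!-\!1$ whose distinct edges share at most one vertex. With that check done, the argument is routine.
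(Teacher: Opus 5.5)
Your proposal is correct and takes essentially the same route as the paper: Hoeffding decomposition $f=f_1(A_1)+f_1(A_2)+f_2$, vanishing of the cross terms by conditioning and independence, cancellation of the first-order parts in $\mathcal R_f$, killing the distinct-edge products (shared vertex or disjoint edges), and uniqueness by conditioning on one argument. The differences are minor: you add the integrability check via Jensen and Cauchy--Schwarz and make the $4$-cycle edge bookkeeping explicit, both of which the paper leaves implicit.
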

\pagebreak[3]

\begin{proof}
Put
$u_i=f_1(A_i)$, $u_{ij}=f_2(A_i,A_j)$, $v_i=g_1(A_i)$, and $v_{ij}=g_2(A_i,A_j)$.
Then $f(A_1,A_2)=u_1+u_2+u_{12}$ and
$g(A_1,A_2)=v_1+v_2+v_{12}$.  Conditional centring and independence make all
terms in the expansion vanish except
$u_1v_1$, $u_2v_2$ and $u_{12}v_{12}$.  The first two have the same
expectation, proving~\eqref{eq:proj-ident1}. 
In~\eqref{eq:rect-contrast} the first-order terms cancel, leaving
\[
 \mathcal R_f=u_{12}-u_{13}-u_{24}+u_{34},
 \qquad
 \mathcal R_g=v_{12}-v_{13}-v_{24}+v_{34}.
\]
For distinct edges $\{i,j\}\ne\{k,l\}$, $\E(u_{ij}v_{kl})=0$: condition on
the common block when the edges meet, and use independence and centring when
they are disjoint.  The four matching-edge products have the same expectation,
which proves the second identity.  Finally, uniqueness follows by conditioning:
if $f(A_1,A_2)=h(A_1)+h(A_2)+w(A_1,A_2)$, with $\E h(A_1)=0$ and $w$
conditionally centred, then $h(A_1)=f_1(A_1)$ almost surely and hence
$w=f_2$; similarly for $g$.
\end{proof}

\begin{proof}[Proof of decomposition \eqref{eq:tau-decomposition} and representation
\eqref{eq:Da-rectangular}]
For $i=1,\ldots,4$, group the observations into the independent blocks
\[
 A_i=((X_{2i-1},Y_{2i-1}),(X_{2i},Y_{2i})),
\]
and set
\[
 f(A_1,A_2)=\delta_X(1234),
 \qquad
 g(A_1,A_2)=\delta_Y(1234).
\]
For $Z=X,Y$, centring of $k_Z$ and the conditional identity in
\eqref{eq:delta-products} give $\E\{\delta_Z(1234)\}=0$, while the sign-sum
representation shows that $\delta_Z(1234)$ is symmetric in the two blocks
$(Z_1,Z_2)$ and $(Z_3,Z_4)$.  Thus $f$ and $g$ are symmetric, mean-zero
kernels, so Lemma~\ref{lem:pair-projection} applies.  By
\eqref{eq:delta-products} and \eqref{eq:ell-def},
\[
\begin{aligned}
 f_1(A_1)&=-6k_X(X_1,X_2),&
 f_2(A_1,A_2)&=6\ell_X(1234),\\
 g_1(A_1)&=-6k_Y(Y_1,Y_2),&
 g_2(A_1,A_2)&=6\ell_Y(1234).
\end{aligned}
\]
Thus $\ell_Z$ is fully degenerate, and the product representation of $\Ra$
above, together with \eqref{eq:Da}, gives
\[
 \E(f_1g_1)=36\Ra,
 \qquad
 \E(f_2g_2)=36\Da.
\]
Together with the first identity in \eqref{eq:delta-products},
\eqref{eq:proj-ident1} therefore gives
\[
 3\ts=\E(fg)=2\E(f_1g_1)+\E(f_2g_2)=72\Ra+36\Da,
\]
which is \eqref{eq:tau-decomposition}.

Finally, \eqref{eq:rect-contrast} and \eqref{eq:K-rectangular} give
$\mathcal R_f=12H_X$ and $\mathcal R_g=12H_Y$.  The rectangular identity in
Lemma~\ref{lem:pair-projection} yields
\[
 \E(H_XH_Y)
 =\frac1{144}\E(\mathcal R_f\mathcal R_g)
 =\frac1{36}\E(f_2g_2)=\Da,
\]
proving \eqref{eq:Da-rectangular}.
\end{proof}

Thus the decomposition is canonical relative to the pair-symmetrised kernel
$\delta_Z$ and the pair-block structure; its special feature is the
nonnegativity of both cross-components proved below.

\medskip
\noindent\emph{Proof strategy.}
Section~\ref{sec:cut-gap} develops the common cut--gap algebra and proves the
finite-table $\Ra$ bound and its equality case.  Section~\ref{sec:finite-certificate}
then derives the $\Da$ gap correction and proves its finite-table bound through
an exact nonnegative certificate.
Section~\ref{sec:arbitrary-laws} transfers the result to arbitrary
laws by weak-order quantisation and proves sharpness.  Theorem~\ref{thm:four-boundary}
then gives nonnegativity; since $R_{\leq,\leq}=R$ and $R=0$ exactly under
independence, it also gives the zero set of $\Ra$, and
\eqref{eq:tau-decomposition} completes Theorem~\ref{thm:main}.

\section{Cut--gap algebra and the finite \texorpdfstring{$\Ra$}{R-tilde} boundary bound}
\label{sec:cut-gap}

Throughout this section, $P=(p_{ij})$ denotes a finite probability table whose
attained row and column levels are ordered as $1,\ldots,m$ and $1,\ldots,n$.
We identify $P$ with the corresponding law of $(X,Y)$ and write $\ts(P)$,
$\Ra(P)$ and $\Da(P)$ for the corresponding functional values.

The main finite-table result for $\Ra$ is the following.

\begin{proposition}[finite $\Ra$ boundary inequality]\label{prop:finite-R}
For every such $P$,
\begin{equation}\label{eq:finite-R-boundary}
\boxed{\quad
 9\Ra(P)\geq
 \sum_{\epsilon,\eta\in\{<,\leq\}}R_{\epsilon,\eta}(P)\geq0.
\quad}
\end{equation}
\end{proposition}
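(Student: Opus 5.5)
The plan is to turn both sides of \eqref{eq:finite-R-boundary} into quadratic forms in one matrix of cut covariances, and then compare the two forms one margin at a time.

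\emph{Step 1: cut representation of $k_Z$.} For a margin with attained levels $1,\ldots,m$ and masses $p_1,\ldots,p_m$, let the cuts be $c=1,\ldots,m-1$, where cut $c$ separates level $c$ from level $c+1$. Put $\chi_c(z)=\1\{z\leq c\}-F_Z(c)$. Since $a$ depends only on the order and the gaps, $\E\{a(z,U,z',V)\}$ can be written as a finite sum over configurations. I would show that $k_Z$ is a quadratic form in these centred cut indicators:
\[
 k_Z(z,z')=\sum_{c,c'}W^Z_{cc'}\,\chi_c(z)\chi_{c'}(z'),
\]
with an explicit symmetric matrix $W^Z$. Its entries should depend on the masses adjacent to the cuts, and it reduces to the diagonal weight $dF_Z$ in the atomless limit, consistent with \eqref{eq:k-atomless}. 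The computation is possible because $k_Z$ is centred and is determined by the values $\E\{a(z,U,z',V)\}$ on pairs of levels. The representation itself would be found by writing the strict-separation events of the case table in terms of cut indicators.

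\emph{Step 2: matrix form of both sides.} Let $\Delta$ be the $(m-1)\times(n-1)$ matrix with entries $\Delta_{cd}=F(c,d)-F_X(c)F_Y(d)$, which are the joint cut covariances. Independence of copies gives $\E\{\chi_c(X)\chi_d(Y)\}=\Delta_{cd}$, so
\[
 \Ra(P)=\operatorname{tr}(W^X\Delta W^Y\Delta^{\top}).
\]
On the other side, $\Delta_{<,\eta}(i,\cdot)$ is $\Delta$ at cut $i-1$ and $\Delta_{\leq,\eta}(i,\cdot)$ is $\Delta$ at cut $i$; the boundary cuts contribute zero. Hence
\[
 \sum_{\epsilon,\eta}R_{\epsilon,\eta}(P)=\operatorname{tr}(M^X\Delta M^Y\Delta^{\top}),
\]
where $M^Z$ is diagonal with $M^Z_{cc}=p_c+p_{c+1}$. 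This sum is manifestly nonnegative, which gives the right-hand inequality.

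\emph{Step 3: Loewner comparison.} Suppose $W^Z\succeq0$ and $3W^Z\succeq M^Z$ for each margin. Then
\[
 9\,W^X\otimes W^Y-M^X\otimes M^Y=(3W^X-M^X)\otimes 3W^Y+M^X\otimes(3W^Y-M^Y)\succeq0.
\]
Evaluating this operator on $\operatorname{vec}(\Delta)$ gives exactly the desired inequality. The problem is thereby reduced to a one-dimensional matrix inequality, $3W^Z\succeq M^Z$ for every finite mass vector.

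\emph{Main obstacle.} I expect this to be proving $3W^Z-M^Z\succeq0$, together with obtaining a usable closed form for $W^Z$ in Step 1. I would first aim to write $W^Z$ as a diagonal part built from cut/gap masses plus a sum of positive rank-one terms, one for each level, coming from ties within that level. I would then try to exhibit $3W^Z-M^Z$ as a Gram matrix, that is, a sum of squares of linear forms in adjacent cut coordinates; equivalently, show it is a diagonally dominant tridiagonal matrix. The two-atom and single-interior-atom cases should be checked directly, both to calibrate the constant $3$ and to locate the equality cases.
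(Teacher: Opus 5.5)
Your reduction is sound and is the paper's own proof in different notation. Because $k_Z$ is conditionally centred and the $\chi_c$ form a basis of the centred functions on the attained levels, $W^Z$ is uniquely determined, and it equals $K(w)/3$ with $K(w)=I-G(w)^{\mathsf T}G(w)$. Your Step~2 is then the paper's identity $9\Ra(P)=\operatorname{tr}\{K(r)\Xc K(c)\Xc^{\mathsf T}\}$ together with the index-shifted boundary sum \eqref{eq:finite-R-boundary-sum}. Your Kronecker telescoping is exactly the paper's two-step bound: first replace $K(r)$ by its diagonal minorant, then do the same for $K(c)$ in each row. However, the proposal stops exactly where the content lies. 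You neither produce $W^Z$ nor prove $3W^Z\succeq M^Z$; both are deferred as the ``main obstacle''. As written, this is a correct reduction, not a proof.

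The missing idea for Step~1 is the signed cut--gap representation \eqref{eq:signed-cut-gap} of $a$ (Lemma~\ref{lem:cut-gap}); it gives $W^Z$ without any configuration-by-configuration enumeration. Substitute $(z_1,z_2,z_3,z_4)=(z,U,z',V)$ and put $\pi_\alpha(i)=\E\rho_\alpha(i,V)$. Each product $\rho_\alpha(z,V)\rho_\alpha(U,z')$ has independent factors, so by antisymmetry of $\rho_\alpha$ it integrates to $-\pi_\alpha(z)\pi_\alpha(z')$. The cut scores are $\pi_{\cc_b}=-\chi_b$, and the gap scores are $G(w)$ times the cut scores by Lemma~\ref{lem:cut-gap-transform}. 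Hence $3k_Z(z,z')=\chi(z)^{\mathsf T}\{I-G(w)^{\mathsf T}G(w)\}\chi(z')$.

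This matrix is tridiagonal, with diagonal entries $1-U_b^2-L_{b-1}^2$ and off-diagonal entries $-L_bU_b$ (where $L_0=U_{d-1}=0$). So, contrary to your expectation, its entries involve the cumulative masses $L_b,U_b$, not only the masses adjacent to a cut; only the lower bound $M^Z$ is local.

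Once you have this form, your diagonal-dominance idea does close the obstacle. Using $L_{b-1}+w_b+w_{b+1}+U_b=1$, the diagonal of $K(w)-M^Z$ is $L_{b-1}(1-L_{b-1})+U_b(1-U_b)$. Subtracting the off-diagonal row sum $L_{b-1}U_{b-1}+L_bU_b$ leaves $L_{b-1}w_b+U_bw_{b+1}\geq0$, so $K(w)\succeq M^Z$ by Gershgorin. Your extra hypothesis $W^Z\succeq0$ then follows automatically. The paper's one-dimensional energy identity (Lemma~\ref{lem:one-dimensional}) writes the same slack as an explicit sum of squares, which also yields the equality cases.
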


We prove the proposition by first establishing the signed cut--gap
representation and the cut-to-gap transform used in both finite-table
arguments, and then deriving an exact quadratic form and a one-dimensional
energy bound.  The $\Da$ argument is deferred to
Section~\ref{sec:finite-certificate}.

For $u,v\in\R$, let
$I(u,v)=\{t\in\R:\min(u,v)<t<\max(u,v)\}$ be the open interval between
$u$ and $v$; thus $I(3,2)=(2,3)$ and $I(u,u)=\varnothing$.  On the ordered
set $\{1,\ldots,d\}$, let $\cc_k$ denote the cut between $k$ and $k+1$
and let $\ggap_k$ denote the gap formed by the adjacent cuts $\cc_k$ and
$\cc_{k+1}$.  Thus $1\leq k\leq d-1$ for cuts and $1\leq k\leq d-2$ for gaps.
The cut $\cc_k$ separates $\{1,\ldots,k\}$ from $\{k+1,\ldots,d\}$,
whereas the gap $\ggap_k$ separates $\{1,\ldots,k\}$ from
$\{k+2,\ldots,d\}$, leaving level $k+1$ in neither block.
Say that $\cc_k$ lies inside $I(u,v)$ if
$u\wedge v\leq k<u\vee v$, and that $\ggap_k$ lies inside it if
$u\wedge v\leq k<k+1<u\vee v$.  For any cut or gap $\alpha$, put
\[
 \rho_\alpha(u,v)=\sgn(u-v)\1\{\alpha\text{ lies inside }I(u,v)\}.
\]

\begin{lemma}[signed cut--gap representation]\label{lem:cut-gap}
For every $z\in\R^4$,
\begin{equation}\label{eq:interval-form}
 a(z)=\sgn\{(z_1-z_4)(z_3-z_2)\}
 \1\{I(z_1,z_4)\cap I(z_2,z_3)\ne\varnothing\}.
\end{equation}
Consequently, when $z_1,\ldots,z_4\in\{1,\ldots,d\}$,
\begin{equation}\label{eq:signed-cut-gap}
 a(z)=\sum_{k=1}^{d-2}
 \rho_{\ggap_k}(z_1,z_4)\rho_{\ggap_k}(z_2,z_3)
 -\sum_{k=1}^{d-1}
 \rho_{\cc_k}(z_1,z_4)\rho_{\cc_k}(z_2,z_3).
\end{equation}
In particular, $a$ depends only on the complete weak order of its arguments,
including their ties.
\end{lemma}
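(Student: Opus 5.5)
The plan is to reduce everything to one-dimensional integrals of indicator functions, so that the difference of matching sums factorises. Fix $z\in\R^4$ and, for $t\in\R$, put $h_i(t)=\1\{z_i<t\}$. For every $t$ outside the finite set $\{z_1,\ldots,z_4\}$, and for $u,v$ among the $z_i$, we have $\1\{t\in I(u,v)\}=(h_u(t)-h_v(t))^2$. Integrating over $t$ gives $|u-v|=\int(h_u-h_v)^2\,dt$. Substituting this into $S_{12\mid34}-S_{13\mid24}$ and expanding the squares, the diagonal terms $h_i^2$ cancel. This leaves
\[
 (h_1-h_2)^2+(h_3-h_4)^2-(h_1-h_3)^2-(h_2-h_4)^2
 =2(h_1-h_4)(h_3-h_2),
\]
and hence
\[
 S_{12\mid34}(z)-S_{13\mid24}(z)=2\int_{\R}\{h_1(t)-h_4(t)\}\{h_3(t)-h_2(t)\}\,dt .
\]

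Next I will evaluate the integrand. Off the finite set $\{z_1,\ldots,z_4\}$ we have $h_u(t)-h_v(t)=-\sgn(u-v)\1\{t\in I(u,v)\}$. The integrand is therefore $\sgn\{(z_1-z_4)(z_3-z_2)\}\1\{t\in I(z_1,z_4)\cap I(z_2,z_3)\}$, and the integral equals this sign times the length of $I(z_1,z_4)\cap I(z_2,z_3)$. An intersection of two open intervals is either empty or an open interval of positive length. Taking $\sgn$ therefore yields \eqref{eq:interval-form}, with the convention $\sgn(0)=0$ covering both a zero prefactor and an empty intersection.

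For \eqref{eq:signed-cut-gap}, take $z_i\in\{1,\ldots,d\}$ and write $s=\sgn\{(z_1-z_4)(z_3-z_2)\}$. Since the endpoints are integers, the intersection $I(z_1,z_4)\cap I(z_2,z_3)$ is nonempty exactly when it contains some unit interval $(k,k+1)$, that is, when some cut $\cc_k$ lies inside both intervals. Because the intersection is an interval, the common cuts form a consecutive run $\cc_j,\ldots,\cc_{j+L-1}$ for some $L\geq0$. A gap $\ggap_k$ lies inside both intervals exactly when the cuts $\cc_k$ and $\cc_{k+1}$ both do, so there are $\max(L-1,0)$ common gaps.

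For each common cut or gap $\alpha$, the product $\rho_\alpha(z_1,z_4)\rho_\alpha(z_2,z_3)$ equals $\sgn(z_1-z_4)\sgn(z_2-z_3)=-s$; every other $\alpha$ contributes $0$. The right-hand side of \eqref{eq:signed-cut-gap} is therefore $-s(L-1)+sL=s$ when $L\geq1$, and $0$ when $L=0$. This agrees with \eqref{eq:interval-form}.

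Finally, for the weak-order claim: replacing the $z_i$ by the ranks of their distinct values is a strictly increasing map. Such a map preserves the signs $\sgn(z_i-z_j)$ and preserves whether the two open intervals intersect. So $a$ is determined by the weak order of its arguments, and \eqref{eq:signed-cut-gap} applies to any real $z$ after ranking.

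I expect the only real obstacle to be the sign bookkeeping: the orientation of $h_u-h_v$, the ordering $(z_3-z_2)$ versus $(z_2-z_3)$, and the minus sign in front of the cut sum must all line up. I will check the final formula against the case table in the verification of \eqref{eq:delta-products}. For $12\parallel34$, say $z_1<z_2<z_3<z_4$, the intervals $(z_1,z_4)$ and $(z_2,z_3)$ overlap and $s=-1$, which matches $a_Z(1234)=-1$.
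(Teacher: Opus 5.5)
Your proof is correct. For \eqref{eq:signed-cut-gap} it coincides with the paper's argument: $L\geq1$ consecutive common cuts give $L-1$ common gaps, and each common index contributes $-s$. For \eqref{eq:interval-form}, however, you take a different route.

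The paper does not derive the interval form analytically. It takes the exhaustive separation-pattern table ($12\parallel34$, $13\parallel24$, $14\parallel23$, none), obtained by direct inspection in the verification of \eqref{eq:delta-products}. It then checks that the right-hand side of \eqref{eq:interval-form} reproduces the values $-1,1,0,0$. You instead prove the exact identity
\[
 S_{12\mid34}(z)-S_{13\mid24}(z)
 =2\sgn\{(z_1-z_4)(z_3-z_2)\}\,\operatorname{Leb}\{I(z_1,z_4)\cap I(z_2,z_3)\},
\]
starting from $|u-v|=\int\{\1(u<t)-\1(v<t)\}^2\,dt$. What your route buys:
\begin{itemize}
\item It is self-contained.
\item It handles tied configurations without a separate check of the table's residual column, where one must see that a nonempty intersection forces $12\parallel34$ or $13\parallel24$.
\item It gives the magnitude of the matching-sum difference, not just its sign.
\end{itemize}

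It also explains the shape of \eqref{eq:signed-cut-gap}. For integer data your integrand is constant on each $(k,k+1)$, so
\[
 \int(h_1-h_4)(h_3-h_2)\,dt=-\sum_k\rho_{\cc_k}(z_1,z_4)\rho_{\cc_k}(z_2,z_3).
\]
Hence the cut term $-\sum_k\rho_{\cc_k}(z_1,z_4)\rho_{\cc_k}(z_2,z_3)$ equals $\tfrac12\{S_{12\mid34}(z)-S_{13\mid24}(z)\}=sL$. The gap sum is then exactly the correction that turns this signed length into its sign.

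The paper's route is shorter only because it reuses the case table, which it needs anyway for \eqref{eq:delta-products} and for the upper bound $\ts\leq2/3$.
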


\begin{proof}
 The $a_Z(1234)$ row of the case table in
 Subsection~\ref{subsec:projection-structure} lists the values of $a(z)$ for
 the four possible separation patterns.  Direct inspection shows that the
 right-hand side of \eqref{eq:interval-form} has the same values, proving the
 identity.

 If the two intervals contain $N\geq1$ common cuts, they contain $N-1$ common
 gaps.  By \eqref{eq:interval-form}, each corresponding product equals
 $-a(z)$, so the right-hand side of \eqref{eq:signed-cut-gap} evaluates to
 $-(N-1)a(z)+Na(z)=a(z)$; if $N=0$, both sides vanish.
\end{proof}

For $d\geq2$ and a nonnegative vector $w=(w_1,\ldots,w_d)$ of total mass
$s>0$, put
\[
 L_a(w)=\sum_{k\leq a}w_k,\qquad
 U_a(w)=\sum_{k\geq a+2}w_k.
\]
Define the $(d-2)\times(d-1)$ bidiagonal matrix
\[
 G(w)=
 \begin{pmatrix}
  U_1(w)&L_1(w)&&&0\\
  &\ddots&\ddots&&\\
  0&&&U_{d-2}(w)&L_{d-2}(w)
 \end{pmatrix},
\]
which will convert quantities indexed by cuts into quantities indexed by gaps.

\begin{lemma}[cut-to-gap transform]\label{lem:cut-gap-transform}
Let $d\geq2$, let $w\in\R_+^d$ have total mass $s>0$, and let $Z_w$ have
probability mass function $w/s$.  For a cut or gap $\alpha$, put
\(\pi_\alpha^w(i)=\E\rho_\alpha(i,Z_w)\), $1\leq i\leq d$.  Then, for every
$i$,
\[
 \begin{aligned}
 s(\pi_{\ggap_a}^w(i))_{1\leq a\leq d-2}
 &=G(w)(\pi_{\cc_b}^w(i))_{1\leq b\leq d-1},\\
 \text{that is,}\qquad
 s\pi_{\ggap_a}^w(i)
 &=U_a(w)\pi_{\cc_a}^w(i)+L_a(w)\pi_{\cc_{a+1}}^w(i).
 \end{aligned}
\]

Moreover, let a nonnegative $2\times d$ table of total mass $s$ have columns
$q_1,\ldots,q_d$ and column margins $w_k=q_{1k}+q_{2k}$.  Define its
column-cut and column-gap determinants by
\[
 \begin{gathered}
 d_b=\det\left(\sum_{k\leq b}q_k,\sum_{k>b}q_k\right),\qquad
 e_a=\det\left(\sum_{k\leq a}q_k,\sum_{k\geq a+2}q_k\right),\\
 s(e_a)_{1\leq a\leq d-2}
 =G(w)(d_b)_{1\leq b\leq d-1}.
 \end{gathered}
\]
\end{lemma}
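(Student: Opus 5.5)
The plan is to prove both parts by direct evaluation: compute $\pi^w_\alpha(i)$ in closed form for cuts and gaps, check the first identity case by case in $i$, and then reduce the determinant identity to the first part using bilinearity and antisymmetry.

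\textbf{Closed forms.} Fix $i$ and write $L_a=L_a(w)$, $U_a=U_a(w)$.

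For a cut $\cc_b$, the cut lies inside $I(i,Z_w)$ exactly when $i$ and $Z_w$ fall on opposite sides of $b$. Reading off the sign of $i-Z_w$ gives
\[
 s\,\pi^w_{\cc_b}(i)=\begin{cases}-(s-L_b), & i\leq b,\\ L_b, & i\geq b+1.\end{cases}
\]

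For a gap $\ggap_a$, the gap lies inside exactly when one of $i,Z_w$ is $\leq a$ and the other is $\geq a+2$. Hence
\[
 s\,\pi^w_{\ggap_a}(i)=\begin{cases}-U_a, & i\leq a,\\ 0, & i=a+1,\\ L_a, & i\geq a+2.\end{cases}
\]
The only bookkeeping identity needed is $s-L_{a+1}=U_a$.

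\textbf{First identity.} I will verify $s\pi^w_{\ggap_a}(i)=U_a\pi^w_{\cc_a}(i)+L_a\pi^w_{\cc_{a+1}}(i)$ in the three cases for $i$.
\begin{itemize}
\item If $i\leq a$, the right side is $-\{U_a(s-L_a)+L_aU_a\}/s=-U_a$.
\item If $i=a+1$, it is $(U_aL_a-L_aU_a)/s=0$.
\item If $i\geq a+2$, it is $L_a(U_a+L_{a+1})/s=L_a$.
\end{itemize}
Each value matches the gap formula, so the matrix form $s\,\pi_{\ggap}=G(w)\pi_{\cc}$ follows row by row.

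\textbf{Determinant identity.} Write $\det(A,B)=A_1B_2-A_2B_1$ and expand $d_b$ by bilinearity. Collecting terms gives
\[
 d_b=\sum_{k,l}q_{1k}q_{2l}\bigl[\1\{k\leq b<l\}-\1\{l\leq b<k\}\bigr]
 =-\sum_{k,l}q_{1k}q_{2l}\,\rho_{\cc_b}(k,l).
\]
The analogous expansion gives $e_a=-\sum_{k,l}q_{1k}q_{2l}\rho_{\ggap_a}(k,l)$.

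Next, replace $q_{2l}$ by $w_l-q_{1l}$. The term $\sum_{k,l}q_{1k}q_{1l}\rho_\alpha(k,l)$ vanishes because $\rho_\alpha$ is antisymmetric. Therefore
\[
 d_b=-s\sum_k q_{1k}\,\pi^w_{\cc_b}(k),\qquad e_a=-s\sum_k q_{1k}\,\pi^w_{\ggap_a}(k).
\]
Multiplying the second expression by $s$ and applying the first part inside the sum for each $k$ gives
\[
 s\,e_a=U_a d_a+L_a d_{a+1}.
\]
This is the claimed relation $s(e_a)=G(w)(d_b)$.

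The main obstacle is the antisymmetry step. The weights $q_{2l}$ are not the column margins $w_l$, so the first part does not apply to $d_b$ and $e_a$ directly. Substituting $q_2=w-q_1$ and cancelling the symmetric $q_1\otimes q_1$ contribution is what makes it apply. The remaining steps are routine bookkeeping.
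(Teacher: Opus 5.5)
Your proof is correct. For the score identity it follows the paper: the same closed forms ($s\pi^w_{\cc_b}(i)=L_b(w)-s\1\{i\leq b\}$ and the three-valued gap formula), checked using $L_{a+1}(w)=L_a(w)+w_{a+1}$ and $s=L_a(w)+w_{a+1}+U_a(w)$.

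For the determinant identity you take a genuinely different route. The paper proves this part without using the score identity. It puts $A=\sum_{k\leq a}q_k$, $V=q_{a+1}$ and $C=\sum_{k\geq a+2}q_k$, and notes that the $3\times3$ matrix with rows $(L_a(w),w_{a+1},U_a(w))$, $(A_1,V_1,C_1)$, $(A_2,V_2,C_2)$ is singular, since its first row is the sum of the other two. Cofactor expansion then gives $U_a(w)\det(A,V)+L_a(w)\det(V,C)=w_{a+1}\det(A,C)$, and bilinearity applied to $d_a=\det(A,V+C)$ and $d_{a+1}=\det(A+V,C)$ finishes the proof.

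You instead write each determinant as the bilinear form $-\sum_{k,l}q_{1k}q_{2l}\rho_\alpha(k,l)$. You then substitute $q_2=w-q_1$ and use antisymmetry of $\rho_\alpha$ to show that $d_b$ and $e_a$ are $-s$ times $q_1$-weighted sums of the scores. This makes the determinant identity a linear image of the score identity.

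What your route buys is a structural explanation of why the same operator $G(w)/s$ governs both representations; the paper only remarks on this after the proof. Your bilinear formula is also, up to normalisation, the $2\times d$ case of the identity $\E\{\rho_\alpha(X,X')\rho_\beta(Y,Y')\}=2\det P^{\alpha\beta}$ that the paper uses later in the gap--gap correction formula.

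The paper's route is shorter and purely local, involving one gap and three column blocks. It also needs neither the first part nor $s>0$. Your use of $\pi^w$ tacitly requires $s>0$, but the case $s=0$ is trivial because all the determinants vanish.
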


\begin{proof}
Directly, $\pi_{\cc_a}^w(i)=L_a(w)/s-\1\{i\leq a\}$ and
$s\pi_{\ggap_a}^w(i)=L_a(w)\1\{i\geq a+2\}
-U_a(w)\1\{i\leq a\}$.
Since $L_{a+1}(w)=L_a(w)+w_{a+1}$ and
$s=L_a(w)+w_{a+1}+U_a(w)$, these formulas give the score identity.

For the determinant form, fix $a$ and put
$A=\sum_{k\leq a}q_k$, $V=q_{a+1}$, and
$C=\sum_{k\geq a+2}q_k$.  The total masses of $A,V,C$ are
$L_a(w),w_{a+1},U_a(w)$, respectively.  Put
$M=\left(\begin{smallmatrix}L_a(w)&w_{a+1}&U_a(w)\\
A_1&V_1&C_1\\ A_2&V_2&C_2\end{smallmatrix}\right)$.
Its first row is the sum of the other two, so expanding $\det M=0$ along that
row gives
$U_a(w)\det(A,V)+L_a(w)\det(V,C)=w_{a+1}\det(A,C)$.
Since $d_a=\det(A,V+C)$, $d_{a+1}=\det(A+V,C)$ and
$e_a=\det(A,C)$,
bilinearity gives
\[
 U_a(w)d_a+L_a(w)d_{a+1}
 =\{w_{a+1}+U_a(w)+L_a(w)\}e_a=se_a,
\]
which proves the determinant identity.
\end{proof}

Thus $G(w)/s$ is the common cut-to-gap operator in the score and determinant
representations.  Define the associated difference-of-energies matrix by
\[
 K(w)=s^2I_{d-1}-G(w)^{\mathsf T}G(w).
\]
When $d=2$, $G(w)$ has no rows and $K(w)=s^2I_1$.

For the proof, write
\[
 r_i=\sum_jp_{ij},\qquad c_j=\sum_ip_{ij},
\]
and, for $1\leq a\leq m-1$ and $1\leq b\leq n-1$, put
\[
 x_{ab}=P(X\leq a,Y\leq b)-P(X\leq a)P(Y\leq b).
\]
Let $\Xc=(x_{ab})_{1\leq a\leq m-1,\,1\leq b\leq n-1}$.
Extend \(x_{ab}\) to the boundary by setting $x_{0b}=x_{mb}=x_{a0}=x_{an}=0$.  At a threshold pair $(i,j)$, the four
boundary residuals are then
\[
 (\Delta_{\leq,\leq},\Delta_{<,\leq},
 \Delta_{\leq,<},\Delta_{<,<})(i,j)
 =(x_{ij},x_{i-1,j},x_{i,j-1},x_{i-1,j-1}).
\]

Combining the six-copy definition \eqref{eq:Ra-def} with the signed cut--gap
representation \eqref{eq:signed-cut-gap} gives the following finite-table
quadratic form.

\begin{corollary}[finite $\Ra$ quadratic form]
For the finite probability table $P$ above, assume $m,n\geq2$.  Then
\begin{equation}\label{eq:master-Ra}
 9\Ra(P)
 =\|\Xc\|_F^2-\|\Xc G(c)^{\mathsf T}\|_F^2-\|G(r)\Xc\|_F^2
   +\|G(r)\Xc G(c)^{\mathsf T}\|_F^2
   =\operatorname{tr}\{K(r)\Xc K(c)\Xc^{\mathsf T}\}.
\end{equation}
\end{corollary}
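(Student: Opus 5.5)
Start from the six-copy definition \eqref{eq:Ra-def}, $9\Ra=\E\{a(X_1,X_2,X_3,X_4)\,a(Y_1,Y_2,Y_5,Y_6)\}$, and substitute the signed cut--gap representation \eqref{eq:signed-cut-gap} into both factors. Use $d=m$ for the $X$ factor and $d=n$ for the $Y$ factor.

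Each factor is then a signed sum over cuts and gaps $\alpha$ of $\rho_\alpha(z_1,z_4)\rho_\alpha(z_2,z_3)$. In the product, the pairs $(X_1,Y_1)$ and $(X_2,Y_2)$ are joint observations that appear in both factors. The remaining coordinates $X_3,X_4,Y_5,Y_6$ are independent of everything else and of each other, since they come from distinct copies.

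Take conditional expectations over $X_4$, $X_3$, $Y_6$ and $Y_5$ first. The products $\rho_\alpha(X_1,X_4)\rho_\alpha(X_2,X_3)$ then become $\pi^r_\alpha(X_1)\pi^r_\alpha(X_2)$, and similarly for $Y$ with $\pi^c_\beta(Y_1)\pi^c_\beta(Y_2)$. Here $\pi^w_\alpha$ is the score of Lemma~\ref{lem:cut-gap-transform}, and $\rho_\alpha$ is antisymmetric in its arguments, so no sign issues arise.

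Since $(X_1,Y_1)$ and $(X_2,Y_2)$ are independent copies, this yields
\[
9\Ra=\sum_{\alpha,\beta}\epsilon_\alpha\epsilon_\beta\bigl(\E\{\pi^r_\alpha(X)\pi^c_\beta(Y)\}\bigr)^2,
\]
with signs $\epsilon=-1$ for cuts and $+1$ for gaps.

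Next compute the cut--cut term. From the proof of Lemma~\ref{lem:cut-gap-transform}, $\pi^r_{\cc_a}(i)=L_a(r)-\1\{i\leq a\}$, because $s=1$ here. Hence
\[
\E\{\pi^r_{\cc_a}(X)\pi^c_{\cc_b}(Y)\}=P(X\leq a,Y\leq b)-L_a(r)L_b(c)=x_{ab},
\]
since the centring terms cancel. By the score identity of the lemma, $\pi_{\ggap}=G\pi_{\cc}$ linearly, and the expectation is bilinear. So the gap--cut cross-moments are the entries of $G(r)\Xc$, the cut--gap ones are those of $\Xc G(c)^{\mathsf T}$, and the gap--gap ones are those of $G(r)\Xc G(c)^{\mathsf T}$.

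Summing squared entries with signs $(+,-,-,+)$ gives the first equality in \eqref{eq:master-Ra}. The cut--cut term carries sign $(-1)(-1)=+$, the mixed terms carry $-$, and the gap--gap term carries $+$.

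For the trace form, write each Frobenius norm as a trace: $\|A\Xc B^{\mathsf T}\|_F^2=\operatorname{tr}(A^{\mathsf T}A\,\Xc\,B^{\mathsf T}B\,\Xc^{\mathsf T})$. Then expand
\[
\operatorname{tr}\{(I-G(r)^{\mathsf T}G(r))\,\Xc\,(I-G(c)^{\mathsf T}G(c))\,\Xc^{\mathsf T}\}
\]
and use $s=1$, so that $K(w)=I-G^{\mathsf T}G$. The degenerate cases $m=2$ or $n=2$ are covered by the convention that $G$ has no rows.

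The main obstacle is bookkeeping in the conditioning step. One must check that the $X$-factor's free slots $X_3,X_4$ and the $Y$-factor's free slots $Y_5,Y_6$ attach to the correct shared observations. In $a(z)$ the relevant pairs are $(z_1,z_4)$ and $(z_2,z_3)$. In the $Y$ factor, $Y_6$ pairs with $Y_1$ and $Y_5$ pairs with $Y_2$, which matches the $X$ pairing $X_4$ with $X_1$ and $X_3$ with $X_2$. The factorisation into a squared cross-moment therefore holds.
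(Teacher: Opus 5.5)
Your proposal is correct and follows the paper's proof essentially step for step. You substitute the signed cut--gap representation into the six-copy definition, factor over the independent groups $\{1,4,6\}$ and $\{2,3,5\}$ to obtain $9\Ra=\sum\sigma_\alpha\sigma_\beta C_{\alpha\beta}^2$, identify the cut--cut moments as $x_{ab}$, pass to the other three blocks through the score form of the cut-to-gap transform, and expand the trace using $K(w)=I-G(w)^{\mathsf T}G(w)$ for unit mass; your explicit conditioning on $(X_1,Y_1),(X_2,Y_2)$ simply spells out the paper's factorisation step.
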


\begin{proof}
For independent marginal draws $X'\sim r$ and
$Y'\sim c$, define
\[
 \pi^X_\alpha(i)=\E\rho_\alpha(i,X'),\qquad
 \pi^Y_\beta(j)=\E\rho_\beta(j,Y').
\]
Let $\mathcal I_r$ comprise the row cuts and gaps, and $\mathcal I_c$ the
column cuts and gaps.  Put $\sigma_\alpha=-1$ for a cut and
$\sigma_\alpha=1$ for a gap, and define
$C_{\alpha\beta}=\sum_{i,j}p_{ij}\pi^X_\alpha(i)\pi^Y_\beta(j)$.
Apply \eqref{eq:signed-cut-gap} to the two sign kernels in
\eqref{eq:Ra-def}.  The factors indexed by $\{1,4,6\}$ and $\{2,3,5\}$ are
independent and identically distributed, so the expansion factors as
\[
 9\Ra(P)=\sum_{\alpha\in\mathcal I_r}\sum_{\beta\in\mathcal I_c}
 \sigma_\alpha\sigma_\beta C_{\alpha\beta}^2.
\]
For two cuts, the coefficient is
\[
 \sum_{i,j}p_{ij}\pi^X_{\cc_a}(i)\pi^Y_{\cc_b}(j)
 =\E[(L_a(r)-\1\{X\leq a\})(L_b(c)-\1\{Y\leq b\})]
 =x_{ab}.
\]
Since $r$ and $c$ have mass one, the component identity in
Lemma~\ref{lem:cut-gap-transform} gives, for example,
\[
 C_{\cc_a\ggap_b}
 =U_b(c)x_{ab}+L_b(c)x_{a,b+1}
 =(\Xc G(c)^{\mathsf T})_{ab}.
\]
Applying the same score transform in the row coordinate and then in both
coordinates, the cut--gap, gap--cut and gap--gap coefficient matrices are
therefore
\[
 \Xc G(c)^{\mathsf T},\qquad G(r)\Xc,\qquad
 G(r)\Xc G(c)^{\mathsf T}.
\]
Substituting these four coefficient blocks into the signed sum obtained from
\eqref{eq:signed-cut-gap} gives the first equality in
\eqref{eq:master-Ra}.  Since $r$ and $c$ have mass one,
$K(r)=I-G(r)^{\mathsf T}G(r)$ and
$K(c)=I-G(c)^{\mathsf T}G(c)$.  Expanding the trace gives the second equality.
\end{proof}

The remaining ingredient is the following one-dimensional energy identity.

\begin{lemma}[one-dimensional energy]\label{lem:one-dimensional}
Let $d\geq2$, and let $w\in\R_+^d$ have mass $s>0$.  For every
$u=(u_1,\ldots,u_{d-1})^{\mathsf T}$,
\begin{equation}\label{eq:weighted-squares}
\begin{split}
 \frac1s u^{\mathsf T}K(w)u
 ={}&\sum_{b=1}^{d-1}(w_b+w_{b+1})u_b^2\\
 &+\frac1s\sum_{b=1}^{d-2}\left[
 L_b(w)U_b(w)(u_b-u_{b+1})^2
 +w_{b+1}\{U_b(w)u_b^2+L_b(w)u_{b+1}^2\}\right].
\end{split}
\end{equation}
In particular,
\[
 K(w)\succeq s\operatorname{diag}(w_b+w_{b+1})_{b=1}^{d-1}.
\]
\end{lemma}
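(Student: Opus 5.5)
We prove \eqref{eq:weighted-squares} by expanding both sides as quadratic forms in $u$ and matching coefficients. Since $G(w)$ is bidiagonal, $\|G(w)u\|^2=\sum_{b=1}^{d-2}\{U_b u_b+L_b u_{b+1}\}^2$, writing $U_b=U_b(w)$, $L_b=L_b(w)$. Hence
\[
 u^{\mathsf T}K(w)u=s^2\sum_{b=1}^{d-1}u_b^2-\sum_{b=1}^{d-2}\bigl(U_b^2u_b^2+2U_bL_bu_bu_{b+1}+L_b^2u_{b+1}^2\bigr).
\]
The right-hand side of \eqref{eq:weighted-squares}, multiplied by $s$, has the same cross terms, $-2L_bU_bu_bu_{b+1}$ from $L_bU_b(u_b-u_{b+1})^2$. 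It therefore suffices to match the diagonal coefficient of each $u_b^2$.

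For this we use the identities $s=L_b+w_{b+1}+U_b$ and $L_{b+1}=L_b+w_{b+1}$, which give $U_{b-1}=U_b+w_{b+1}$. On the left, the coefficient of $u_b^2$ is $s^2-U_b^2-L_{b-1}^2$, with the conventions that $U_b$ is absent when $b=d-1$ and $L_{b-1}$ is absent when $b=1$. On the right, times $s$, it is
\[
 s^2(w_b+w_{b+1})+L_bU_b+w_{b+1}U_b+L_{b-1}U_{b-1}+w_bL_{b-1},
\]
where again the term $L_bU_b+w_{b+1}U_b$ is present only for $b\le d-2$, and the term $L_{b-1}U_{b-1}+w_bL_{b-1}$ only for $b\ge2$. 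Before comparing we divide the whole identity by $s$, since the statement has a factor $1/s$ on the left. The first sum on the right then carries $s(w_b+w_{b+1})$ after clearing; the correct target is
\[
 s^2-U_b^2-L_{b-1}^2=s(w_b+w_{b+1})+L_bU_b+w_{b+1}U_b+L_{b-1}U_{b-1}+w_bL_{b-1}.
\]
To verify it, note that $L_bU_b+w_{b+1}U_b=U_b(L_b+w_{b+1})=U_b(s-U_b)$. Likewise $L_{b-1}U_{b-1}+w_bL_{b-1}=L_{b-1}(U_{b-1}+w_b)=L_{b-1}(s-L_{b-1})$, because $L_{b-1}+w_b+U_{b-1}=s$. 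The right side is therefore
\[
 s(w_b+w_{b+1})+sU_b+sL_{b-1}-U_b^2-L_{b-1}^2=s(L_{b-1}+w_b+w_{b+1}+U_b)-U_b^2-L_{b-1}^2=s^2-U_b^2-L_{b-1}^2,
\]
which matches the left. The boundary cases need checking. For $b=1$ we have $L_0=0$, and for $b=d-1$ we set $U_{d-1}=0$; the total-mass identity $L_{b-1}+w_b+w_{b+1}+U_b=s$ still holds with these conventions. In the case $d=2$, we have $K=s^2$, $w_1+w_2=s$, and there are no gap terms, so the identity holds trivially.

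The matrix inequality follows at once. Every bracketed term in \eqref{eq:weighted-squares} is nonnegative, since $w,L_b,U_b\ge0$. Hence $u^{\mathsf T}K(w)u\ge s\sum_b(w_b+w_{b+1})u_b^2$ for all $u$, which is the stated Loewner bound. The only real obstacle is the bookkeeping: keeping the index shifts $U_{b-1}$ versus $U_b$ straight and handling the boundary conventions consistently.
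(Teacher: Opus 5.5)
Your proof is correct and is essentially the paper's argument. Both are direct verifications of the quadratic-form identity resting on the mass identities $s=L_b+w_{b+1}+U_b$ and $s-U_b-L_{b-1}=w_b+w_{b+1}$ (with $L_0=U_{d-1}=0$). The paper organises the computation by splitting $s^2u_b^2$ and applying $(U+L)(Uu^2+Lv^2)-(Uu+Lv)^2=UL(u-v)^2$ gap by gap, rather than matching coefficients. In a write-up, delete the stray intermediate expression $s^2(w_b+w_{b+1})$, which you yourself correct to $s(w_b+w_{b+1})$.
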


\begin{proof}
Set $L_0(w)=U_{d-1}(w)=0$.  The definitions give
$s-U_b(w)-L_{b-1}(w)=w_b+w_{b+1}$ for $1\leq b\leq d-1$.
Using this identity to regroup the definition of $K(w)$ gives
\begin{align*}
 u^{\mathsf T}K(w)u
 ={}&s\sum_{b=1}^{d-1}(w_b+w_{b+1})u_b^2\\
 &+\sum_{b=1}^{d-2}\left[
 s\{U_b(w)u_b^2+L_b(w)u_{b+1}^2\}
 -\{U_b(w)u_b+L_b(w)u_{b+1}\}^2\right].
\end{align*}
For $U,L\geq0$, the identity
$(U+L)(Uu^2+Lv^2)-(Uu+Lv)^2=UL(u-v)^2$ holds.
Since $s=U_b(w)+L_b(w)+w_{b+1}$, applying this identity to each bracket and
dividing by $s$ gives \eqref{eq:weighted-squares}.  Positive
semidefiniteness follows, and for $d=2$ the second sum is empty.
\end{proof}

For $1\leq a\leq m-1$, write
$x_a=(x_{a1},\ldots,x_{a,n-1})^{\mathsf T}$ for the $a$th row of $\Xc$;
under the boundary convention, $x_0=x_m=0$.

\begin{proof}[Proof of Proposition~\ref{prop:finite-R}]
The assertion is trivial if only one row or column is attained.  Hence assume
$m,n\geq2$; by hypothesis all margins are positive.  Since
$R_{\epsilon,\eta}$ averages over the product margins, shifting indices
gives
\begin{equation}\label{eq:finite-R-boundary-sum}
 \sum_{\epsilon,\eta\in\{<,\leq\}}R_{\epsilon,\eta}(P)
 =\sum_{a=1}^{m-1}\sum_{b=1}^{n-1}
 (r_a+r_{a+1})(c_b+c_{b+1})x_{ab}^2.
\end{equation}

Lemma~\ref{lem:one-dimensional} gives $K(c)\succeq0$.  Apply its lower bound
first to the columns of $\Xc K(c)^{1/2}$ with $w=r$, and then to each row
$x_a$ with $w=c$.  Equations \eqref{eq:master-Ra} and
\eqref{eq:finite-R-boundary-sum} give
\begin{align*}
 9\Ra(P)
 &=\operatorname{tr}\{K(r)\Xc K(c)\Xc^{\mathsf T}\}
 \geq\sum_{a=1}^{m-1}(r_a+r_{a+1})x_a^{\mathsf T}K(c)x_a\\
 &\geq\sum_{a=1}^{m-1}\sum_{b=1}^{n-1}
 (r_a+r_{a+1})(c_b+c_{b+1})x_{ab}^2
 =\sum_{\epsilon,\eta\in\{<,\leq\}}R_{\epsilon,\eta}(P)\geq0.\qedhere
\end{align*}
\end{proof}

\begin{remark}[Equality in the finite $\Ra$ bound]
\label{rem:finite-R-equality}
Equality in the first inequality of \eqref{eq:finite-R-boundary} holds if and only if
$P=rc^{\mathsf T}$ or $m=n=2$.  If $m=1$ or $n=1$, then
$P=rc^{\mathsf T}$, so assume $m,n\geq2$.  For a positive probability vector
$w\in\R^d$, \eqref{eq:weighted-squares} shows that equality in
\[
 u^{\mathsf T}K(w)u\geq
 \sum_{b=1}^{d-1}(w_b+w_{b+1})u_b^2
\]
holds for every $u$ when $d=2$, whereas for $d\geq3$ it forces $u=0$;
the same bound shows that $K(w)$ is positive definite.  Thus both steps in
the proof above are equalities for every $\Xc$ when $m=n=2$.  If $m\geq3$,
equality in the first step forces $\Xc K(c)^{1/2}=0$, hence $\Xc=0$; if
$m=2<n$, equality in the second step forces $\Xc=0$.  Finally,
$\Xc=0$ is equivalent to $P=rc^{\mathsf T}$.
\end{remark}

\section{Finite tables: the \texorpdfstring{$\Da$}{D-tilde} correction and certificate}
\label{sec:finite-certificate}

Throughout this section, $P,r,c,\Xc,x_a,G,K,L_a$ and $U_a$ retain the
notation of Section~\ref{sec:cut-gap}.

Together with Proposition~\ref{prop:finite-R}, the following proposition is
the finite-table form of Theorem~\ref{thm:four-boundary}.  The argument first
derives the exact gap--gap correction, then reduces the boundary sum to
adjacent-row energies, rewrites the correction through adjacent-determinant
residuals, and assembles the resulting local terms into a three-block
nonnegative certificate.

\begin{proposition}[finite $\Da$ boundary inequality]\label{prop:finite-D}
For every such $P$,
\begin{equation}\label{eq:finite-D-boundary}
\boxed{\quad
 9\Da(P)\geq
 \sum_{\epsilon,\eta\in\{<,\leq\}}D_{\epsilon,\eta}(P).
\quad}
\end{equation}
Consequently,
\[
 \ts(P)\geq24\Ra(P)
 +\frac43\sum_{\epsilon,\eta\in\{<,\leq\}}D_{\epsilon,\eta}(P)
 \geq24\Ra(P)\geq0.
\]
\end{proposition}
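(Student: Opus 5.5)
The plan is to imitate the proof of Proposition~\ref{prop:finite-R}. We work with an exact finite-table quadratic expression for $\ts(P)$ and subtract the one for $\Ra(P)$ from \eqref{eq:master-Ra}. Since $\Da=(\ts-24\Ra)/12$ by \eqref{eq:tau-decomposition}, we get $9\Da(P)=\tfrac34\ts(P)-18\Ra(P)$. It therefore suffices to write $\ts(P)$ in cut--gap coordinates and compare it with the same four blocks. We may assume $m,n\geq2$ and all margins positive.

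\emph{Step 1: exact formula for $\ts(P)$.} Apply \eqref{eq:signed-cut-gap} to both factors in \eqref{eq:tau-def}. The factors involve the pairs $\{1,4\}$ and $\{2,3\}$, which are now independent joint draws. The expansion therefore factors as $\ts(P)=\sum_{\alpha,\beta}\sigma_\alpha\sigma_\beta T_{\alpha\beta}^2$, where
\[
T_{\alpha\beta}=\E\{\rho_\alpha(X_1,X_4)\rho_\beta(Y_1,Y_4)\}.
\]
For two cuts, collapsing $P$ to a $2\times2$ table gives $T_{\cc_a\cc_b}=2x_{ab}$. For a cut and a gap, collapsing the cut coordinate gives a $2\times d$ table. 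The determinant part of Lemma~\ref{lem:cut-gap-transform} then identifies the gap determinant with $G(c)$ (respectively $G(r)$) applied to the cut determinants. This reproduces the blocks $2\Xc G(c)^{\mathsf T}$ and $2G(r)\Xc$.

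For two gaps, the relevant quantity is $2\mathcal E_{ab}$. Here $\mathcal E_{ab}$ is the $2\times2$ determinant of the table obtained by deleting row $a+1$ and column $b+1$. This is \emph{not} $(G(r)\Xc G(c)^{\mathsf T})_{ab}$, because deleting a row changes the weights used for the column transform. We will expand $\mathcal E_{ab}$ through the $3\times3$ local block of $P$ around $(a+1,b+1)$, obtaining
\[
\mathcal E_{ab}=(G(r)\Xc G(c)^{\mathsf T})_{ab}+\text{(correction)},
\]
where the correction is bilinear in the middle row/column entries and in adjacent-determinant residuals.

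\emph{Step 2: reduction.} Combining Step 1 with \eqref{eq:master-Ra}, we aim to write $9\Da(P)$ as
\[
\operatorname{tr}\{K(r)\Xc K(c)\Xc^{\mathsf T}\}+\text{(explicit gap--gap correction quadratic)}.
\]
The right-hand side of \eqref{eq:finite-D-boundary} is expanded as
\[
\sum_{i,j}p_{ij}\bigl(x_{ij}^2+x_{i-1,j}^2+x_{i,j-1}^2+x_{i-1,j-1}^2\bigr).
\]
This is a sum of local energies weighted by cell masses rather than product masses. We group it by adjacent rows $(a,a+1)$, as in the proof of Proposition~\ref{prop:finite-R}. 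The trace term is again lower-bounded by Lemma~\ref{lem:one-dimensional}, but we now keep the full identity \eqref{eq:weighted-squares} rather than only the diagonal bound. Its discarded terms $L_bU_b(u_b-u_{b+1})^2$ and $w_{b+1}\{\cdots\}$ are needed to absorb the discrepancy between $p_{ij}$ and $r_ic_j$.

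\emph{Step 3: certificate (main obstacle).} What remains is to show that the difference is nonnegative. This difference consists of the trace identity, the gap correction and the weighted boundary sum, and it is a polynomial in the entries of $P$. We will write it as a sum over adjacent row pairs and column pairs of three blocks of nonnegative terms. Each term is a nonnegative weight (products of $p_{ij}$, $L$ and $U$) times a square of a combination of $x_{ab}$ and adjacent $2\times2$ determinants $p_{ij}p_{kl}-p_{il}p_{kj}$. The first thing to try is to verify the certificate on $3\times3$ tables by computer algebra, then exploit locality to pass to general $m,n$. Finding the correct grouping of cross terms is where the difficulty lies.

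The second displayed chain then follows by combining \eqref{eq:finite-D-boundary} with \eqref{eq:tau-decomposition} and Proposition~\ref{prop:finite-R}.
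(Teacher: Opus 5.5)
\textbf{Verdict: genuine gap.} Steps~1 and~2 are correct and match the paper's setup, but Step~3, which carries the whole content of the proposition, is not supplied.

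\emph{What is right.} Your Steps~1 and~2 coincide with the paper's setup:
\begin{itemize}
\item the signed cut--gap expansion of $\ts(P)$;
\item the identification of the cut--cut, cut--gap and gap--cut blocks with those in \eqref{eq:master-Ra};
\item the observation that only the gap--gap block differs, giving $9\Da(P)=9\Ra(P)+3\mathcal E(P)$.
\end{itemize}
The gap--gap correction is simpler than you expect. The gap--gap determinant equals $(G(r)\Xc G(c)^{\mathsf T})_{ab}-(\xi_a)_b$, where $(\xi_a)_b=x_{a,b+1}x_{a+1,b}-x_{ab}x_{a+1,b+1}$ is an adjacent $2\times2$ minor of the residual matrix $\Xc$, not of $P$.

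\emph{What is missing.} The proposition \emph{is} the nonnegativity statement, and your Step~3, which you call the main obstacle, gives no argument. ``Verify on $3\times3$ tables by computer algebra, then exploit locality'' is a search strategy, not a certificate. Several things block a naive local approach.
\begin{itemize}
\item The remainder $\mathcal E(P)$ can be negative: the cyclic $3\times3$ permutation table gives $\mathcal E=-1/729$. So it must be mixed with the trace term, not bounded separately.
\item The local terms in the paper's reduction are not local in a $3\times3$ sense. They involve the aggregated row blocks below and above row $a+1$ (masses $L_a(r)$, $U_a(r)$ and their conditional column laws) and full residual rows of arbitrary length $n-1$. A $3\times3$ check covers neither general $n$ nor, without a prior reduction, general $m$.
\item Keeping the discarded terms of \eqref{eq:weighted-squares} with the product margins is the right move in the row direction. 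By itself, though, it does not turn the product weights $(r_a+r_{a+1})(c_b+c_{b+1})$ into the joint weights $\nu^{(a)}_b+\nu^{(a)}_{b+1}$ that appear in $\sum_{\epsilon,\eta}D_{\epsilon,\eta}$.
\end{itemize}

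\emph{The three ideas the paper uses to close this.}
\begin{enumerate}
\item It changes the comparison object to the adjacent-row energy $J(P)=\sum_a x_a^{\mathsf T}K(\nu^{(a)})x_a/t_a$. Here $\nu^{(a)}=p_{a,\boldsymbol\cdot}+p_{a+1,\boldsymbol\cdot}$ is the \emph{joint} column law of an adjacent row pair. Lemma~\ref{lem:one-dimensional} with $w=\nu^{(a)}$ gives $J(P)\geq\sum_{\epsilon,\eta}D_{\epsilon,\eta}(P)$ at once, so it remains only to show $9\Da(P)\geq J(P)$.
\item It uses the row-shift identity $G(p_{i,\boldsymbol\cdot})=r_iG(c)-W(x_i-x_{i-1})$, with $[W(v)u]_b=v_{b+1}u_b-v_bu_{b+1}$. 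This writes $G(q)v$, for every relevant column law $q$ (adjacent row pairs, the table with row $a+1$ removed, and the three row blocks), as a multiple of $G(c)v$ plus the minors $\xi_a=W(x_a)x_{a+1}$. As a result, $9\Da(P)-J(P)$ becomes two manifestly nonnegative sums in the $\xi_a$ plus local terms $\mathcal B_a+\mathcal C_a$. Here $\mathcal B_a$ is exactly the row-direction excess from \eqref{eq:weighted-squares}.
\item It gives a three-block certificate. Pairing the conditional column laws $c_L,c_M,c_U$ of the lower, middle and upper row blocks via Lemma~\ref{lem:opposite-shift-pairing} yields $\mathcal B_a+\mathcal C_a=\mathcal N_a+(1/\ell+1/m_0+1/u-6)\|\xi_a\|_2^2$. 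Since $K(c_L),K(c_M),K(c_U)\succeq0$, we have $\mathcal N_a\geq0$, and AM--HM makes the coefficient at least $3$.
\end{enumerate}

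Without an argument of this kind valid for all $m,n$, the boxed inequality remains unproved. Your deduction of the final chain for $\ts$ from it is fine.
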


We prove Proposition~\ref{prop:finite-D} using the determinant form of the
cut-to-gap transform, Lemma~\ref{lem:cut-gap-transform}, and three further
results.  Proposition~\ref{prop:master} isolates the correction term for
$\Da$, and
Lemma~\ref{lem:opposite-shift-pairing} supplies the local nonnegative pairing
used in the final certificate.  Lemma~\ref{lem:row-block-shifts} records the
sign-sensitive row-block identities that connect the table to that pairing.

\begin{proposition}[gap--gap correction formula]\label{prop:master}
Suppose $m,n\geq2$ and all row and column margins of $P$ are positive.  Put
$s_a=L_a(r)+U_a(r)=1-r_{a+1}$ and, for
$1\leq a\leq m-2$,
\[
 z_a=U_a(r)x_a+L_a(r)x_{a+1},
 \qquad
 c^{(a)}=c-(p_{a+1,1},\ldots,p_{a+1,n}).
\]
Then
\begin{equation}\label{eq:master-Da}
 9\Da(P)=9\Ra(P)+3\mathcal E(P),\qquad
 \mathcal E(P)=\sum_{a=1}^{m-2}\left\{
 z_a^{\mathsf T}K(c)z_a
 -s_a^{-2}z_a^{\mathsf T}K(c^{(a)})z_a\right\}.
\end{equation}
If $m=2$, the sum defining $\mathcal E(P)$ is empty; if $n=2$, every summand
vanishes.  Hence $\mathcal E(P)=0$ in either case.
\end{proposition}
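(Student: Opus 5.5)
Since $\ts=12\Da+24\Ra$, the claim $9\Da=9\Ra+3\mathcal E$ is equivalent to $3\ts=108\Ra+9\mathcal E$. I would therefore compute $\ts(P)$ directly through the signed cut--gap representation, as in the finite $\Ra$ quadratic form, and subtract the $\Ra$ part. By \eqref{eq:signed-cut-gap},
\[
 \ts(P)=\sum_{\alpha,\beta}\sigma_\alpha\sigma_\beta
 \E\{\rho_\alpha(X_1,X_4)\rho_\alpha(X_2,X_3)\rho_\beta(Y_1,Y_4)\rho_\beta(Y_2,Y_3)\}.
\]
Here the pairs $\{1,4\}$ and $\{2,3\}$ are now joint observations, not independent margins. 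Each summand equals $\E\{T_{\alpha\beta}(X_1,Y_1,X_4,Y_4)\,T_{\alpha\beta}(X_2,Y_2,X_3,Y_3)\}$ with $T_{\alpha\beta}=\rho_\alpha\rho_\beta$, which is the square of the expectation $M_{\alpha\beta}=\E\{\rho_\alpha(X_1,X_4)\rho_\beta(Y_1,Y_4)\}$. So $\ts=\sum\sigma_\alpha\sigma_\beta M_{\alpha\beta}^2$, mirroring $9\Ra=\sum\sigma_\alpha\sigma_\beta C_{\alpha\beta}^2$.

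The first step is to compute $M_{\alpha\beta}$ for the four blocks. For two cuts, $\rho_{\cc_a}(X_1,X_4)\rho_{\cc_b}(Y_1,Y_4)$ is a product of separation indicators. Its expectation is $2\{F(a,b)(1-F_X(a)-F_Y(b)+F(a,b))-(F_X(a)-F(a,b))(F_Y(b)-F(a,b))\}=2x_{ab}$, which is the usual $2\times2$ determinant identity. Similarly, for a cut--gap pair, a row-gap $\ggap_a$ splits rows into the three blocks $\le a$, $a+1$, $\ge a+2$, and only the two outer blocks count. One gets $2$ times the determinant $e$ of the $2\times d$ table built from the row blocks, which the determinant half of Lemma~\ref{lem:cut-gap-transform} expresses through $G$.

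This is where $c^{(a)}$ and $s_a$ enter. Deleting row $a+1$ leaves a table of mass $s_a$ with column margin $c^{(a)}$. For gap--gap pairs, the column transform must therefore be applied with weights $c^{(a)}$ and total mass $s_a$, rather than with $c$ as in the $\Ra$ computation. Cut--cut, cut--gap and gap--cut blocks then yield $4\times$ the corresponding $\Ra$-type terms $\|\Xc\|^2$, $\|\Xc G(c)^{\mathsf T}\|^2$ and $\|G(r)\Xc\|^2$, because the row-gap determinant equals $(G(r)\Xc)_a=z_a$ by the transform. The gap--gap block gives $4\sum_a s_a^{-2}\|G(c^{(a)})z_a\|^2$ in place of $\|G(c)z_a\|^2$.

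The second step is bookkeeping. I expect $\ts=4\cdot(\text{the }\Ra\text{ quadratic form with modified gap--gap block})$ up to the normalisation constants fixed by $3\ts=108\Ra+9\mathcal E$. Subtracting $4\cdot9\Ra$ then leaves
\[
 \sum_a z_a^{\mathsf T}\{G(c)^{\mathsf T}G(c)-s_a^{-2}G(c^{(a)})^{\mathsf T}G(c^{(a)})\}z_a .
\]
Using $K(c)=I-G(c)^{\mathsf T}G(c)$ and $K(c^{(a)})=s_a^2I-G(c^{(a)})^{\mathsf T}G(c^{(a)})$, the identity terms cancel exactly. This reproduces $\mathcal E$, up to sign and constant, which I will fix by checking against the $3\times3$ example.

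The main obstacle is the gap--gap block. I must justify that the column gap determinant, taken over the sub-table with the middle row removed, transforms with $G(c^{(a)})/s_a$. This should follow by applying Lemma~\ref{lem:cut-gap-transform} to the $2\times n$ table formed by the two outer row blocks, whose total mass is $s_a$ and whose column margins are $c^{(a)}$. The degenerate cases are immediate. For $m=2$ there are no row gaps. For $n=2$ there are no column gaps, so $G$ is empty, $K(c)=I_1$ and $K(c^{(a)})=s_a^2I_1$, and each summand vanishes.
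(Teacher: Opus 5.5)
Your proposal is essentially the paper's proof. You write $\ts/4$ as the signed sum of squared collapsed $2\times2$ determinants, note that the cut--cut, cut--gap and gap--cut blocks coincide with the coefficient blocks of $9\Ra$, and obtain the gap--gap block as $s_a^{-1}G(c^{(a)})z_a$ by applying the determinant form of the cut-to-gap lemma to the outer-row-block $2\times n$ table.

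Two bookkeeping slips should be fixed. Neither needs to be, nor should be, calibrated on the $3\times3$ example, because your block computation already pins the constant and the sign.

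\begin{itemize}
\item \emph{The constant.} Since $\ts/4=3\Da+6\Ra$, the target is $\ts/4-9\Ra=\mathcal E$, equivalently $3\ts=108\Ra+12\mathcal E$. Your stated equivalent, $3\ts=108\Ra+9\mathcal E$, is wrong.
\item \emph{The sign.} The gap--gap block is same-type, so it enters $\ts$ with a plus sign. The residual is therefore $\ts/4-9\Ra=\sum_a z_a^{\mathsf T}\{s_a^{-2}G(c^{(a)})^{\mathsf T}G(c^{(a)})-G(c)^{\mathsf T}G(c)\}z_a$, the opposite of the sign you wrote. Substituting $K(c)$ and $K(c^{(a)})$ into this expression gives exactly $\mathcal E$.
\end{itemize}
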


\begin{proof}
For $\alpha=\cc_a$ or $\ggap_a$, set $a^+=a+1$ or $a+2$, respectively;
for $\beta=\cc_b$ or $\ggap_b$, define $b^+$ analogously.  The corresponding
collapsed $2\times2$ table is
\[
 P^{\alpha\beta}=
 \begin{pmatrix}
  P(X\leq a,Y\leq b)&P(X\leq a,Y\geq b^+)\\
  P(X\geq a^+,Y\leq b)&P(X\geq a^+,Y\geq b^+)
 \end{pmatrix}.
\]

\noindent\emph{The $\ts$ expansion.}
If $Z=(X,Y)$ and $Z'=(X',Y')$ are independent draws from $P$, then
\[
 \E\{\rho_\alpha(X,X')\rho_\beta(Y,Y')\}=2\det P^{\alpha\beta}.
\]
Apply \eqref{eq:signed-cut-gap} in both coordinates of \eqref{eq:tau-def}.
The pairs $(Z_1,Z_4)$ and $(Z_2,Z_3)$ are independent.  Grouping cut--cut
and gap--gap as same-type, and cut--gap and gap--cut as mixed-type, each
product of the two pair factors contributes $\{2\det P^{\alpha\beta}\}^2$.
Hence
\begin{equation}\label{eq:tau-collapse}
 \frac{\ts(P)}4=
 \sum_{\alpha,\beta\,\mathrm{same}}
 (\det P^{\alpha\beta})^2
 -\sum_{\alpha,\beta\,\mathrm{mixed}}
 (\det P^{\alpha\beta})^2.
\end{equation}

The four blocks of determinants in \eqref{eq:tau-collapse}, with their index
ranges displayed in the row and column headings, are
\begin{equation}\label{eq:actual-blocks}
\begin{array}{c|cc}
 &\cc_b,\ 1\leq b\leq n-1&\ggap_b,\ 1\leq b\leq n-2\\ \hline
 \cc_a,\ 1\leq a\leq m-1
   &x_{ab}&(\Xc G(c)^{\mathsf T})_{ab}\\
 \ggap_a,\ 1\leq a\leq m-2
   &(G(r)\Xc)_{ab}&s_a^{-1}\{G(c^{(a)})z_a\}_b
\end{array}
\end{equation}
The first is the determinant of the ordinary $2\times2$ cut table.  For
fixed $a$, collapsing the rows of $P$ into $\{i\leq a\}$ and $\{i>a\}$
gives a $2\times n$ table with margins $c$ and cut vector $x_a$;
Lemma~\ref{lem:cut-gap-transform} gives cut--gap.  The same construction for
$P^{\mathsf T}$ gives gap--cut.  Replacing the row blocks by $\{i\leq a\}$ and
$\{i\geq a+2\}$ gives a $2\times n$ table of mass $s_a$, with margins
$c^{(a)}$.  Its $b$th cut determinant is
\[
 U_a(r)P(X\leq a,Y\leq b)-L_a(r)P(X\geq a+2,Y\leq b)
 =U_a(r)x_{ab}+L_a(r)x_{a+1,b}=(z_a)_b,
\]
so its cut vector is $z_a$ and Lemma~\ref{lem:cut-gap-transform} gives gap--gap.

\medskip
\noindent\emph{Comparison.}
The first three determinant blocks in \eqref{eq:actual-blocks} agree with the
coefficient blocks used to derive \eqref{eq:master-Ra}; only the gap--gap
blocks differ.  Thus
\begin{equation}\label{eq:tau-four-blocks}
\begin{aligned}
 \frac{\ts(P)}4
 &=\|\Xc\|_F^2-\|\Xc G(c)^{\mathsf T}\|_F^2-\|G(r)\Xc\|_F^2
 +\sum_{a=1}^{m-2}s_a^{-2}\|G(c^{(a)})z_a\|_2^2.
\end{aligned}
\end{equation}
The rows of $G(r)\Xc$ are $z_a^{\mathsf T}$; in particular,
\[
 \|G(r)\Xc G(c)^{\mathsf T}\|_F^2
 =\sum_{a=1}^{m-2}\|G(c)z_a\|_2^2.
\]
Since $c^{(a)}$ has mass $s_a$,
$K(c^{(a)})=s_a^2I-G(c^{(a)})^{\mathsf T}G(c^{(a)})$.
Comparing
\eqref{eq:tau-four-blocks} with the norm form of \eqref{eq:master-Ra} and
using the definition of $K$ gives directly
\begin{equation}\label{eq:tau-R-correction}
\begin{aligned}
 \frac{\ts(P)}4-9\Ra(P)
 &=\sum_{a=1}^{m-2}\left\{
 s_a^{-2}\|G(c^{(a)})z_a\|_2^2-\|G(c)z_a\|_2^2\right\}\\
 &=\sum_{a=1}^{m-2}\left\{
 z_a^{\mathsf T}K(c)z_a
 -s_a^{-2}z_a^{\mathsf T}K(c^{(a)})z_a\right\}
 =\mathcal E(P).
\end{aligned}
\end{equation}
By \eqref{eq:tau-decomposition},
$\ts/4-9\Ra=3(\Da-\Ra)$.  Thus \eqref{eq:tau-R-correction} gives
\eqref{eq:master-Da}.  If $m=2$, the sum defining $\mathcal E$ is empty.  If
$n=2$, then $K(c)=I_1$ and
$K(c^{(a)})=s_a^2I_1$, so again $\mathcal E=0$.
\end{proof}

The remainder $\mathcal E$ need not be nonnegative: for
$P=\tfrac13\Bigl(\begin{smallmatrix}0&0&1\\1&0&0\\0&1&0\end{smallmatrix}\Bigr)$,
direct substitution gives $\mathcal E(P)=-1/729$.  Thus nonnegativity cannot
follow by proving the remainder nonnegative; its terms must instead be
reorganised into a separate certificate.

\begin{lemma}[opposite-shift pairing identity]
\label{lem:opposite-shift-pairing}
Let $n\geq2$, let $q_1,q_2\in\R_+^n$ be probability vectors, and let
$G_0\in\R^{(n-2)\times(n-1)}$.  Put
\[
 \Psi(v)=\|v\|_2^2-\|G_0v\|_2^2.
\]
Suppose that $v\in\R^{n-1}$, $\xi\in\R^{n-2}$ and
$\lambda_1,\lambda_2>0$ satisfy
\[
 G(q_1)v=G_0v+\frac{\xi}{\lambda_1},
 \qquad
 G(q_2)v=G_0v-\frac{\xi}{\lambda_2}.
\]
Then
\[
 \frac{\lambda_1\lambda_2}{\lambda_1+\lambda_2}\,
 v^{\mathsf T}\{\lambda_2K(q_1)+\lambda_1K(q_2)\}v
 =\lambda_1\lambda_2\Psi(v)
 +2(\lambda_1-\lambda_2)\langle G_0v,\xi\rangle
 -\left(\frac{\lambda_1}{\lambda_2}
 +\frac{\lambda_2}{\lambda_1}-1\right)\|\xi\|_2^2.
\]
In particular, the left-hand side is nonnegative.
\end{lemma}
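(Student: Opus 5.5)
The plan is a direct algebraic verification, resting on one observation. Since $q_1$ and $q_2$ are probability vectors, each has total mass $s=1$, so the definition of $K$ gives
\[
 K(q_i)=I_{n-1}-G(q_i)^{\mathsf T}G(q_i),
 \qquad
 v^{\mathsf T}K(q_i)v=\|v\|_2^2-\|G(q_i)v\|_2^2 .
\]
I will substitute the two hypotheses $G(q_1)v=G_0v+\xi/\lambda_1$ and $G(q_2)v=G_0v-\xi/\lambda_2$ into these energies. This expresses each quadratic form through $\Psi(v)$, the inner product $\langle G_0v,\xi\rangle$, and $\|\xi\|_2^2$.

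First, expanding the squared norms gives
\[
 v^{\mathsf T}K(q_1)v=\Psi(v)-\frac{2}{\lambda_1}\langle G_0v,\xi\rangle-\frac{1}{\lambda_1^2}\|\xi\|_2^2,
 \qquad
 v^{\mathsf T}K(q_2)v=\Psi(v)+\frac{2}{\lambda_2}\langle G_0v,\xi\rangle-\frac{1}{\lambda_2^2}\|\xi\|_2^2 .
\]
Second, I form the combination $\lambda_2 v^{\mathsf T}K(q_1)v+\lambda_1 v^{\mathsf T}K(q_2)v$ and collect coefficients:
\begin{itemize}[leftmargin=2em]
\item the coefficient of $\Psi(v)$ is $\lambda_1+\lambda_2$;
\item the coefficient of $\langle G_0v,\xi\rangle$ is $2(\lambda_1/\lambda_2-\lambda_2/\lambda_1)$;
\item the coefficient of $\|\xi\|_2^2$ is $-(\lambda_2/\lambda_1^2+\lambda_1/\lambda_2^2)$.
\end{itemize}

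Third, I multiply by $\lambda_1\lambda_2/(\lambda_1+\lambda_2)$ and simplify each term:
\begin{itemize}[leftmargin=2em]
\item the $\Psi$ term becomes $\lambda_1\lambda_2\Psi(v)$;
\item the cross term becomes $2(\lambda_1^2-\lambda_2^2)/(\lambda_1+\lambda_2)\,\langle G_0v,\xi\rangle=2(\lambda_1-\lambda_2)\langle G_0v,\xi\rangle$;
\item the $\xi$ coefficient becomes $-(\lambda_1^3+\lambda_2^3)/\{\lambda_1\lambda_2(\lambda_1+\lambda_2)\}$.
\end{itemize}
Factoring $\lambda_1^3+\lambda_2^3=(\lambda_1+\lambda_2)(\lambda_1^2-\lambda_1\lambda_2+\lambda_2^2)$ turns the last coefficient into $-(\lambda_1/\lambda_2+\lambda_2/\lambda_1-1)$. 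This is exactly the claimed identity.

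For the final sentence, Lemma~\ref{lem:one-dimensional} with $s=1$ gives $K(q_i)\succeq\operatorname{diag}(q_{i,b}+q_{i,b+1})\succeq0$. For $n=2$, $K(q_i)=I_1$ directly. Since $\lambda_1,\lambda_2>0$, the left-hand side is a nonnegative combination of nonnegative quadratic forms, and hence is nonnegative.

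There is no real obstacle; the only points needing care are bookkeeping. One is the sign of $\xi$ in the two hypotheses, which makes the cross terms combine into the antisymmetric coefficient $\lambda_1-\lambda_2$. The other is the cubic factorisation in the $\|\xi\|_2^2$ coefficient. Note also that the identity needs no property of $G_0$ beyond its dimensions.
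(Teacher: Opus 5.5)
Your proposal is correct and matches the paper's proof: both expand $v^{\mathsf T}K(q_i)v=\|v\|_2^2-\|G(q_i)v\|_2^2$ using the two shift hypotheses and take the weights $\lambda_1\lambda_2^2/(\lambda_1+\lambda_2)$ and $\lambda_1^2\lambda_2/(\lambda_1+\lambda_2)$, which are your two steps combined. Both then simplify the $\|\xi\|_2^2$ coefficient via $\lambda_1^3+\lambda_2^3=(\lambda_1+\lambda_2)(\lambda_1^2-\lambda_1\lambda_2+\lambda_2^2)$ and obtain nonnegativity from $K(q_1),K(q_2)\succeq0$ by Lemma~\ref{lem:one-dimensional}.
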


\begin{proof}
Because $q_1$ and $q_2$ have mass one,
$K(q_i)=I-G(q_i)^{\mathsf T}G(q_i)$.  The two quadratic forms are
\[
\begin{aligned}
 v^{\mathsf T}K(q_1)v
 &=\Psi(v)-\frac{2}{\lambda_1}\langle G_0v,\xi\rangle
   -\frac{1}{\lambda_1^2}\|\xi\|_2^2,\\
 v^{\mathsf T}K(q_2)v
 &=\Psi(v)+\frac{2}{\lambda_2}\langle G_0v,\xi\rangle
   -\frac{1}{\lambda_2^2}\|\xi\|_2^2.
\end{aligned}
\]
Multiply the $q_1$ and $q_2$ identities by
$\lambda_1\lambda_2^2/(\lambda_1+\lambda_2)$ and
$\lambda_1^2\lambda_2/(\lambda_1+\lambda_2)$, respectively, and add.  The
stated coefficient of $\|\xi\|_2^2$ follows from
$(\lambda_1^3+\lambda_2^3)/\{\lambda_1\lambda_2(\lambda_1+\lambda_2)\}
=\lambda_1/\lambda_2+\lambda_2/\lambda_1-1$.  Finally,
$K(q_1),K(q_2)\succeq0$ by Lemma~\ref{lem:one-dimensional}.
\end{proof}

\begin{lemma}[row-block shift identities]
\label{lem:row-block-shifts}
Let $P$ be a finite probability table with $m,n\geq3$, row and column margins
$r,c$, and residual rows $x_a\in\R^{n-1}$, where $x_0=x_m=0$.  Put
$G_0=G(c)$ and define
\[
 [W(v)u]_b=v_{b+1}u_b-v_bu_{b+1}\quad(1\leq b\leq n-2),
 \qquad \xi_a=W(x_a)x_{a+1}.
\]
Here $0\leq a\leq m-1$ for $\xi_a$.  Also write
$\nu^{(a)}=p_{a,\boldsymbol\cdot}+p_{a+1,\boldsymbol\cdot}$ and
$t_a=r_a+r_{a+1}$ for $1\leq a\leq m-1$; for $1\leq a\leq m-2$, write
$z_a=U_a(r)x_a+L_a(r)x_{a+1}$, $s_a=1-r_{a+1}$, and
$c^{(a)}=c-p_{a+1,\boldsymbol\cdot}$.  Then
\begin{align}
 G(p_{i,\boldsymbol\cdot})
 &=r_iG_0-W(x_i-x_{i-1}), &&1\leq i\leq m,
 \label{eq:single-row-shift}\\
 G(\nu^{(a)})x_a
 &=t_aG_0x_a+\xi_{a-1}+\xi_a, &&1\leq a\leq m-1,
 \label{eq:adjacent-pair-shift}\\
 G(c^{(a)})z_a
 &=s_a(G_0z_a-\xi_a), &&1\leq a\leq m-2.
 \label{eq:omitted-row-shift}
\end{align}
\end{lemma}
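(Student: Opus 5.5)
The plan is to use one structural fact: the map $w\mapsto G(w)$ is linear in $w$, because each entry $U_b(w)$ or $L_b(w)$ is a partial sum of the coordinates of $w$. The first identity \eqref{eq:single-row-shift} will follow by expressing the partial sums of a single row $p_{i,\boldsymbol\cdot}$ through the residual rows. The other two identities will then follow from linearity of $G$ and an antisymmetry property of $W$.

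\emph{Step 1: proof of \eqref{eq:single-row-shift}.} Fix $i$ and put $\delta=x_i-x_{i-1}$, using $x_0=x_m=0$ and the boundary convention $x_{a0}=x_{an}=0$. From $x_{ab}=P(X\le a,Y\le b)-L_a(r)L_b(c)$ and $L_i(r)-L_{i-1}(r)=r_i$, I expect
\[
 L_b(p_{i,\boldsymbol\cdot})=\sum_{k\le b}p_{ik}=r_iL_b(c)+\delta_b .
\]
Replacing $b$ by $b+1$ and subtracting from the row total $r_i$ should give
\[
 U_b(p_{i,\boldsymbol\cdot})=r_iU_b(c)-\delta_{b+1}.
\]
The boundary convention makes this valid also at $b+1=n$. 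Applying the $b$th row of $G(p_{i,\boldsymbol\cdot})$ to an arbitrary vector $u$ then gives
\[
 r_i(G_0u)_b-\bigl(\delta_{b+1}u_b-\delta_bu_{b+1}\bigr)=r_i(G_0u)_b-[W(\delta)u]_b ,
\]
which is \eqref{eq:single-row-shift} as an identity of matrices. The main thing to check carefully here is the index bookkeeping at the endpoints $b=1$ and $b=n-2$. I do not expect any genuine obstacle in this step.

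\emph{Step 2: properties of $W$.} The bracket $[W(v)u]_b=v_{b+1}u_b-v_bu_{b+1}$ is bilinear in $(v,u)$ and antisymmetric, so $W(v)u=-W(u)v$ and in particular $W(v)v=0$.

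\emph{Step 3: proof of \eqref{eq:adjacent-pair-shift}.} Since $\nu^{(a)}=p_{a,\boldsymbol\cdot}+p_{a+1,\boldsymbol\cdot}$, linearity of $G$ and Step 1 give
\[
 G(\nu^{(a)})=t_aG_0-W(x_{a+1}-x_{a-1}),
\]
because the two $W(x_a)$ terms cancel. Applying this to $x_a$ and using antisymmetry, $-W(x_{a+1})x_a=W(x_a)x_{a+1}=\xi_a$ and $W(x_{a-1})x_a=\xi_{a-1}$, which is \eqref{eq:adjacent-pair-shift}.

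\emph{Step 4: proof of \eqref{eq:omitted-row-shift}.} Since $c^{(a)}=c-p_{a+1,\boldsymbol\cdot}$, linearity and Step 1 give
\[
 G(c^{(a)})=(1-r_{a+1})G_0+W(x_{a+1}-x_a)=s_aG_0+W(x_{a+1}-x_a).
\]
I then apply this to $z_a=U_a(r)x_a+L_a(r)x_{a+1}$ and use $W(v)v=0$:
\[
 W(x_{a+1}-x_a)z_a=U_a(r)\,W(x_{a+1})x_a-L_a(r)\,W(x_a)x_{a+1}=-\{U_a(r)+L_a(r)\}\xi_a=-s_a\xi_a .
\]
This gives \eqref{eq:omitted-row-shift}. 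The hypothesis $m,n\ge3$ only ensures that the gap matrices and the index ranges involved are nonempty.
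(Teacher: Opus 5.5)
Your proposal is correct and follows essentially the same route as the paper: both derive \eqref{eq:single-row-shift} from $L_b(p_{i,\boldsymbol\cdot})=r_iL_b(c)+(x_i-x_{i-1})_b$ by matching the two nonzero entries in each row, then obtain the other two identities from linearity of $G$, the antisymmetry $W(u)v=-W(v)u$, and $W(v)v=0$. Your index bookkeeping, including the case $i=m$ with $x_m=0$ and the sign computation $W(x_{a+1}-x_a)z_a=-s_a\xi_a$, checks out.
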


\begin{proof}
Put $d_i=x_i-x_{i-1}$.  By the definition of the residuals,
$(d_i)_b=L_b(p_{i,\boldsymbol\cdot})-r_iL_b(c)$.  Thus the two nonzero
entries in row $b$ of $r_iG_0-W(d_i)$ are
\[
 r_iU_b(c)-(d_i)_{b+1}=U_b(p_{i,\boldsymbol\cdot}),
 \qquad
 r_iL_b(c)+(d_i)_b=L_b(p_{i,\boldsymbol\cdot}),
\]
proving \eqref{eq:single-row-shift}.  Summing it for rows $a,a+1$ and applying
the result to $x_a$ gives, by $W(u)v=-W(v)u$,
\[
 G(\nu^{(a)})x_a
 =t_aG_0x_a-W(x_{a+1}-x_{a-1})x_a
 =t_aG_0x_a+\xi_{a-1}+\xi_a,
\]
which is \eqref{eq:adjacent-pair-shift}.  Finally,
$G(c^{(a)})=s_aG_0+W(x_{a+1}-x_a)$ by
\eqref{eq:single-row-shift}, while $W(v)v=0$ gives
\[
 W(x_{a+1}-x_a)z_a
 =-\{U_a(r)+L_a(r)\}\xi_a=-s_a\xi_a,
\]
and hence \eqref{eq:omitted-row-shift}.
\end{proof}

\begin{proof}[Proof of Proposition~\ref{prop:finite-D}]
The assertion is trivial if only one row or column is attained.  Hence assume
$m,n\geq2$; by hypothesis all margins are positive.

\medskip
\noindent\emph{Boundary reduction.}
At $(i,j)$ the four boundary residuals are
$x_{ij},x_{i-1,j},x_{i,j-1},x_{i-1,j-1}$.  Averaging their squares with
respect to the joint mass $p_{ij}$ and shifting indices gives
\begin{equation}\label{eq:finite-D-boundary-sum}
 \sum_{\epsilon,\eta\in\{<,\leq\}}D_{\epsilon,\eta}(P)
 =\sum_{a=1}^{m-1}\sum_{b=1}^{n-1}
 (p_{ab}+p_{a+1,b}+p_{a,b+1}+p_{a+1,b+1})x_{ab}^2.
\end{equation}

All quantities in \eqref{eq:finite-D-boundary} are unchanged by transposing
$P$.  Hence, if $\min(m,n)=2$, we may assume without loss of generality that
$n=2$.
Proposition~\ref{prop:master} gives $\Da(P)=\Ra(P)$, while the preceding
display reduces to
$\sum_{a=1}^{m-1}(r_a+r_{a+1})x_{a1}^2$,
which is also $\sum_{\epsilon,\eta}R_{\epsilon,\eta}(P)$ by
\eqref{eq:finite-R-boundary-sum}.  The assertion therefore follows from
Proposition~\ref{prop:finite-R}.  Henceforth assume $m,n\geq3$.

\medskip
\noindent\emph{Adjacent-row comparison.}
For $1\leq a\leq m-1$, retain the notation
$\nu^{(a)}=p_{a,\boldsymbol\cdot}+p_{a+1,\boldsymbol\cdot}$ and
$t_a=r_a+r_{a+1}$ from Lemma~\ref{lem:row-block-shifts}.
Thus $t_a$ is the mass of the adjacent row pair, whereas
$s_a=1-r_{a+1}$ in Proposition~\ref{prop:master} is the mass outside the
row $a+1$.
Define the adjacent-row energy
\[
 J(P)=\sum_{a=1}^{m-1}
 \frac{x_a^{\mathsf T}K(\nu^{(a)})x_a}{t_a}.
\]
By Lemma~\ref{lem:one-dimensional} and \eqref{eq:finite-D-boundary-sum},
\[
 J(P)\geq\sum_{a=1}^{m-1}\sum_{b=1}^{n-1}
 (\nu_b^{(a)}+\nu_{b+1}^{(a)})x_{ab}^2
 =\sum_{\epsilon,\eta\in\{<,\leq\}}D_{\epsilon,\eta}(P).
\]
It remains to prove $9\Da(P)\geq J(P)$.  We shall express both quantities
relative to the same quadratic baseline.  Their difference will split into
two manifestly nonnegative sums and local terms that will be certified by a
three-block argument.

\medskip
\noindent\emph{Adjacent-determinant reduction.}
Retain $G_0$, $W$ and $\xi_a$ from
Lemma~\ref{lem:row-block-shifts}.  Since $c$ has mass one, define
\[
 \Psi(v)=v^{\mathsf T}K(c)v=\|v\|_2^2-\|G_0v\|_2^2.
\]
The common baseline just described is
$\sum_{a=1}^{m-1}t_a\Psi(x_a)$.
We shall repeatedly use the following shift identity.  If $q\in\R_+^n$ has
mass $t>0$ and $G(q)v=tG_0v+\eta$, then, since
$K(q)=t^2I-G(q)^{\mathsf T}G(q)$,
\begin{equation}\label{eq:mass-shift}
 v^{\mathsf T}K(q)v
 =t^2\|v\|_2^2-\|tG_0v+\eta\|_2^2
 =t^2\Psi(v)-2t\langle G_0v,\eta\rangle-\|\eta\|_2^2.
\end{equation}

Applying \eqref{eq:mass-shift} to \eqref{eq:adjacent-pair-shift}, dividing by
$t_a$, and
summing over $a$ gives
\[
 J(P)=\sum_{a=1}^{m-1}t_a\Psi(x_a)
 -2\sum_{a=1}^{m-1}\langle G_0x_a,\xi_{a-1}+\xi_a\rangle
 -\sum_{a=1}^{m-1}\frac{\|\xi_{a-1}+\xi_a\|_2^2}{t_a}.
\]

For the rest of the proof, abbreviate $L_a=L_a(r)$ and $U_a=U_a(r)$.
For $1\leq a\leq m-2$, recall from
Lemma~\ref{lem:row-block-shifts} that
$z_a=U_ax_a+L_ax_{a+1}$ and $s_a=L_a+U_a=1-r_{a+1}$.
Equation~\eqref{eq:omitted-row-shift} gives
\[
 G(c^{(a)})z_a
 =s_a(G_0z_a-\xi_a).
\]
Applying \eqref{eq:mass-shift} with $q=c^{(a)}$, $t=s_a$, $v=z_a$, and
$\eta=-s_a\xi_a$ first yields the auditable intermediate identity
\[
 s_a^{-2}z_a^{\mathsf T}K(c^{(a)})z_a
 =\Psi(z_a)+2\langle G_0z_a,\xi_a\rangle-\|\xi_a\|_2^2.
\]
Using $z_a^{\mathsf T}K(c)z_a=\Psi(z_a)$ in
Proposition~\ref{prop:master} now gives
\begin{equation}\label{eq:E-adjacent-determinant}
 \mathcal E(P)=\sum_{a=1}^{m-2}
 \left\{\|\xi_a\|_2^2-2\langle G_0z_a,\xi_a\rangle\right\}.
\end{equation}

\medskip
\noindent\emph{Organizing decomposition.}
We now name the local contributions that will remain in $9\Da(P)-J(P)$.
For $1\leq a\leq m-2$, put
\[
\begin{aligned}
 \mathcal B_a
 &={L_aU_a}\Psi(x_a-x_{a+1})
 +r_{a+1}\{U_a\Psi(x_a)+L_a\Psi(x_{a+1})\},
 \\
 \mathcal C_a&=2\langle G_0(x_a+x_{a+1}-3z_a),\xi_a\rangle.
\end{aligned}
\]
Thus $\mathcal B_a$ will be the row-energy excess, $\mathcal C_a$ the combined
mixed term.  The three-block argument below will prove
$\mathcal B_a+\mathcal C_a\geq0$.

Lemma~\ref{lem:one-dimensional} gives $K(c)\succeq0$.  Put
$Y=\Xc K(c)^{1/2}$, and let $y^{(j)}$ denote its $j$th column.  The squared
norms of the $a$th row of $Y$ and of the difference between its $a$th and
$(a+1)$st rows are, respectively, $\Psi(x_a)$ and
$\Psi(x_a-x_{a+1})$.  Equations~\eqref{eq:master-Ra} and
\eqref{eq:weighted-squares}, with $w=r$ and $s=1$, now give the exact
row-energy decomposition
\begin{equation}\label{eq:Ra-row-energy}
 9\Ra(P)=\operatorname{tr}\{K(r)\Xc K(c)\Xc^{\mathsf T}\}
 =\sum_{j=1}^{n-1}(y^{(j)})^{\mathsf T}K(r)y^{(j)}
 =\sum_{a=1}^{m-1}t_a\Psi(x_a)
  +\sum_{a=1}^{m-2}\mathcal B_a.
\end{equation}

For a coefficient ledger, write
$\mathcal A_0=\sum_{a=1}^{m-1}t_a\Psi(x_a)$.  The three sources in
$9\Da(P)-J(P)$ are
\[
\begin{aligned}
 9\Ra(P)
 &=\mathcal A_0+\sum_{a=1}^{m-2}\mathcal B_a,\\
 3\mathcal E(P)
 &=-6\sum_{a=1}^{m-2}\langle G_0z_a,\xi_a\rangle
   +3\sum_{a=1}^{m-2}\|\xi_a\|_2^2,\\
 -J(P)
 &=-\mathcal A_0
   +2\sum_{a=1}^{m-1}\langle G_0x_a,\xi_{a-1}+\xi_a\rangle
   +\sum_{a=1}^{m-1}\frac{\|\xi_{a-1}+\xi_a\|_2^2}{t_a}.
\end{aligned}
\]
Because $\xi_0=\xi_{m-1}=0$, an index shift in the cross terms gives
\[
 2\sum_{a=1}^{m-1}\langle G_0x_a,\xi_{a-1}+\xi_a\rangle
 -6\sum_{a=1}^{m-2}\langle G_0z_a,\xi_a\rangle
 =2\sum_{a=1}^{m-2}
 \langle G_0(x_a+x_{a+1}-3z_a),\xi_a\rangle
 =\sum_{a=1}^{m-2}\mathcal C_a.
\]
Consequently, \eqref{eq:master-Da}, \eqref{eq:E-adjacent-determinant},
\eqref{eq:Ra-row-energy}, and the earlier expansion of $J(P)$ give
\begin{equation}\label{eq:pre-certificate}
\begin{aligned}
 9\Da(P)&=\sum_{a=1}^{m-1}t_a\Psi(x_a)
 -2\sum_{a=1}^{m-1}\langle G_0x_a,\xi_{a-1}+\xi_a\rangle
 +\sum_{a=1}^{m-2}(\mathcal B_a+\mathcal C_a)
 +3\sum_{a=1}^{m-2}\|\xi_a\|_2^2\\
 &=J(P)+\sum_{a=1}^{m-2}(\mathcal B_a+\mathcal C_a)
 +3\sum_{a=1}^{m-2}\|\xi_a\|_2^2
 +\sum_{a=1}^{m-1}\frac{\|\xi_{a-1}+\xi_a\|_2^2}{t_a}.
\end{aligned}
\end{equation}

The last two sums are manifestly nonnegative.  It remains to prove
$\mathcal B_a+\mathcal C_a\geq0$ for every $a$.

\medskip
\noindent\emph{Three-block setup.}
For $1\leq a\leq m-2$, abbreviate
\[
 \ell=L_a,\qquad m_0=r_{a+1},\qquad u=U_a,
 \qquad x=x_a,\quad y=x_{a+1}.
\]
Since all row levels are attained, $\ell,m_0,u>0$.
The conditional laws of $Y$ given the lower, middle and upper row blocks are
\[
 c_L=\frac1\ell\sum_{i\leq a}p_{i,\boldsymbol\cdot},\qquad
 c_M=\frac1{m_0}p_{a+1,\boldsymbol\cdot},\qquad
 c_U=\frac1u\sum_{i\geq a+2}p_{i,\boldsymbol\cdot}.
\]
Thus $c=\ell c_L+m_0c_M+u c_U$.  Summing
\eqref{eq:single-row-shift} over the three row blocks, using $x_0=x_m=0$, and
dividing by their masses gives
\[
 G(c_L)=G_0-\frac1\ell W(x),\qquad
 G(c_M)=G_0-\frac1{m_0}W(y-x),\qquad
 G(c_U)=G_0+\frac1uW(y).
\]
The matrices $K(c_L),K(c_M),K(c_U)$ are positive semidefinite by
Lemma~\ref{lem:one-dimensional}.  Antisymmetry of $W$ and $W(v)v=0$ now give
the following array of products $G(q)v$:
\begin{equation}\label{eq:three-block-shifts}
\begin{array}{c|ccc}
 G(q)v&v=x&v=y&v=x-y\\ \hline
 q=c_L&G_0x&G_0y-\xi_a/\ell&G_0(x-y)+\xi_a/\ell\\
 q=c_M&G_0x+\xi_a/m_0&G_0y+\xi_a/m_0&G_0(x-y)\\
 q=c_U&G_0x-\xi_a/u&G_0y&G_0(x-y)-\xi_a/u.
\end{array}
\end{equation}

\medskip
\noindent\emph{Pairing the three blocks.}
Apply Lemma~\ref{lem:opposite-shift-pairing} to the two nonzero shifts in
each column of \eqref{eq:three-block-shifts}, always with $\xi=\xi_a$.
For $v=x$, take
$(q_1,q_2;\lambda_1,\lambda_2)=(c_M,c_U;m_0,u)$; for $v=y$ and $v=x-y$,
take $(c_M,c_L;m_0,\ell)$ and $(c_L,c_U;\ell,u)$, respectively.
The three nonnegative left sides sum to
\[
\begin{aligned}
 \mathcal N_a={}&
 \frac{m_0u}{m_0+u}\,
 x^{\mathsf T}\{uK(c_M)+m_0K(c_U)\}x
 +\frac{\ell m_0}{\ell+m_0}\,
 y^{\mathsf T}\{\ell K(c_M)+m_0K(c_L)\}y\\
 &+\frac{\ell u}{\ell+u}\,
 (x-y)^{\mathsf T}\{uK(c_L)+\ell K(c_U)\}(x-y).
\end{aligned}
\]
Thus $\mathcal N_a\geq0$.  The $\Psi$-terms on the three right sides satisfy
\[
 m_0u\Psi(x)+m_0\ell\Psi(y)+\ell u\Psi(x-y)=\mathcal B_a.
\]
Their cross terms satisfy
\[
\begin{aligned}
 &2\left\langle
 (m_0-u)G_0x+(m_0-\ell)G_0y+(\ell-u)G_0(x-y),\xi_a
 \right\rangle\\
 &\qquad=2\langle G_0(x+y-3z_a),\xi_a\rangle=\mathcal C_a,
\end{aligned}
\]
where the first equality uses $\ell+m_0+u=1$ and
$z_a=ux+\ell y$.
Finally, since $\ell+m_0+u=1$, the coefficients multiplying
$-\|\xi_a\|_2^2$ add to
\[
 \left(\frac{m_0}{u}+\frac{u}{m_0}-1\right)
 +\left(\frac{m_0}{\ell}+\frac{\ell}{m_0}-1\right)
 +\left(\frac{\ell}{u}+\frac{u}{\ell}-1\right)
 =\frac1\ell+\frac1{m_0}+\frac1u-6.
\]
Therefore
\begin{equation}\label{eq:local-three-block-certificate}
 \mathcal B_a+\mathcal C_a
 =\mathcal N_a+
 \left(\frac1\ell+\frac1{m_0}+\frac1u-6\right)\|\xi_a\|_2^2.
\end{equation}

Since $\ell+m_0+u=1$, AM--HM gives
$\ell^{-1}+m_0^{-1}+u^{-1}\geq9$, so the coefficient in
\eqref{eq:local-three-block-certificate} is at least $3$.  Hence
\eqref{eq:local-three-block-certificate} and $\mathcal N_a\geq0$ imply
$\mathcal B_a+\mathcal C_a\geq0$.  Equation~\eqref{eq:pre-certificate}
therefore gives $9\Da(P)\geq J(P)$.  Together with the adjacent-row comparison
above, this proves
\eqref{eq:finite-D-boundary}.  The displayed consequence follows from
\eqref{eq:tau-decomposition} and Proposition~\ref{prop:finite-R}.
\end{proof}

\section{Passage to arbitrary laws, zero sets, and sharpness}
\label{sec:arbitrary-laws}

Weak-order quantisation transfers the finite inequalities without losing
information about ties and preserves the boundary functionals needed for the
equality case.  The final subsection applies these limits and proves
sharpness.

\subsection{Weak-order quantisation}

For integers $k\geq1$, define the finite-range nondecreasing quantiser
\[
 q_k(x)=2^{-k}
 \left\lfloor2^k\bigl((-k)\vee(x\wedge k)\bigr)\right\rfloor,
 \qquad X^{(k)}=q_k(X),\quad Y^{(k)}=q_k(Y).
\]
Thus $X^{(k)}$ and $Y^{(k)}$ are obtained by clipping $X$ and $Y$ to
$[-k,k]$ and rounding down to the dyadic grid $2^{-k}\mathbb Z$; in
particular, they have finite range and converge pointwise to $X$ and $Y$.

\begin{lemma}[weak-order quantisation]\label{lem:quantization}
For every bivariate law,
\[
 \begin{gathered}
 \ts(X^{(k)},Y^{(k)})\longrightarrow\ts(X,Y),\qquad
 \Ra(X^{(k)},Y^{(k)})\longrightarrow\Ra(X,Y),\\
 \Da(X^{(k)},Y^{(k)})\longrightarrow\Da(X,Y),\\
 D_{\epsilon,\eta}(X^{(k)},Y^{(k)})\longrightarrow D_{\epsilon,\eta}(X,Y),\qquad
 R_{\epsilon,\eta}(X^{(k)},Y^{(k)})\longrightarrow R_{\epsilon,\eta}(X,Y),\\
 \epsilon,\eta\in\{<,\leq\}.
 \end{gathered}
\]
\end{lemma}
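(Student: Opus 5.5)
The plan is to use one principle throughout: the quantiser preserves the weak order of any finite collection of points, in the limit and almost surely. All functionals are expectations of bounded functions of the weak order of finitely many independent copies, and the convergence then follows by dominated convergence.

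Fix independent copies $(X_i,Y_i)$, $i\le 8$. For fixed reals $u,v$, the map $q_k$ is nondecreasing. If $u<v$, then eventually $q_k(u)<q_k(v)$, because clipping becomes inactive once $k>|u|\vee|v|$ and the dyadic rounding error is at most $2^{-k}<v-u$. If $u=v$, then trivially $q_k(u)=q_k(v)$. Hence, pointwise on the sample space, the complete weak order of $(X_1,\dots,X_8)$ and of $(Y_1,\dots,Y_8)$ is eventually reproduced exactly by the quantised variables.

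By Lemma~\ref{lem:cut-gap}, $a$ depends only on the weak order of its arguments. The same therefore holds for $a_Z$, $\delta_Z$, and hence for $H_Z$ in \eqref{eq:K-rectangular}. All of these are bounded, with $|a|\le1$, $|\delta_Z|\le2$ and $|H_Z|\le 2/3$.

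Dominated convergence then applies directly to \eqref{eq:tau-def}, to the six-copy definition \eqref{eq:Ra-def}, and to the eight-copy representation \eqref{eq:Da-rectangular}. This gives the convergence of $\ts$, $\Ra$ and $\Da$. It is precisely to avoid the inner conditional expectations in $k_Z$ and $\ell_Z$ that I use \eqref{eq:Da-rectangular} rather than \eqref{eq:Da}: the rectangular form contains only indicator-type kernels of the observed copies.

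For the boundary functionals, I first rewrite them as expectations over independent copies of products of indicators centred by other independent copies. Expanding the square in \eqref{eq:D-boundary} gives
\[
 D_{\epsilon,\eta}=\E\bigl[\{\iota_\epsilon(X_2,X_1)-\iota_\epsilon(X_3,X_1)\}\{\iota_\eta(Y_2,Y_1)-\iota_\eta(Y_4,Y_1)\}\times\{\iota_\epsilon(X_5,X_1)-\iota_\epsilon(X_6,X_1)\}\{\iota_\eta(Y_5,Y_1)-\iota_\eta(Y_7,Y_1)\}\bigr].
\]
In this expression the evaluation point $(X_1,Y_1)$ is a joint draw. The indices are chosen so that $X_2,X_3,X_5,X_6$ together with $Y_4,Y_7$ enter with the appropriate independence structure, and the product-of-margins term is realised through copies whose $X$ and $Y$ come from different observations. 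This is routine bookkeeping: the identity holds because $\E[\iota(X_2,x)\iota(Y_2,y)]=F_{\epsilon,\eta}$, while $\E[\iota(X_3,x)\iota(Y_4,y)]=F_{X,\epsilon}F_{Y,\eta}$.

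$R_{\epsilon,\eta}$ has the same form, except that the evaluation point is $(X_1,Y_8)$, with $X_1$ and $Y_8$ taken from different copies. Each integrand is a bounded function of the weak orders of the $X$'s and of the $Y$'s. The strictness type of each indicator matters here, but for a fixed pair $u,v$ both $\1\{q_k(u)<q_k(v)\}$ and $\1\{q_k(u)\le q_k(v)\}$ eventually equal their unquantised values by the order-preservation step. Dominated convergence finishes the argument.

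I expect the main obstacle to be the first step: checking carefully that strict inequalities are eventually preserved despite clipping at $\pm k$, and that ties are never broken. Everything else is bookkeeping of copies. I would also state explicitly that the quantised laws are finite tables, so that the finite results of Propositions~\ref{prop:finite-R} and \ref{prop:finite-D} apply to them.
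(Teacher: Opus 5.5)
Your proposal is correct and follows the paper's proof: quantise one fixed iid sample pathwise, note that the complete weak orders are eventually preserved exactly, observe that the bounded kernels for $\ts$, $\Ra$ (six-copy definition) and $\Da$ (rectangular eight-copy representation) depend only on those weak orders, and conclude by bounded convergence. The only difference is cosmetic. For the boundary functionals, the paper uses the singly centred factors $\iota_\epsilon(X_2,x)\{\iota_\eta(Y_2,y)-\iota_\eta(Y_4,y)\}$ and $\iota_\epsilon(X_3,x)\{\iota_\eta(Y_3,y)-\iota_\eta(Y_5,y)\}$, giving five- and six-copy kernels, whereas your doubly centred factors need seven and eight copies but are equally valid.
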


\begin{proof}
Fix one iid sequence $(X_i,Y_i)_{i\geq1}$ with the law of $(X,Y)$, and put
$X_i^{(k)}=q_k(X_i)$ and $Y_i^{(k)}=q_k(Y_i)$.  Thus all comparisons may be
made pathwise on a common probability space, while, for each fixed $k$,
$(X_i^{(k)},Y_i^{(k)})_{i\geq1}$ is iid with common law
$\mathcal L(X^{(k)},Y^{(k)})$.

\medskip
\noindent\emph{Weak-order stabilisation.}
Fix a finite realised coordinate list.  If it has more than one distinct
value, their minimum positive separation is positive; if not, preservation is
immediate.  Hence, for all sufficiently large $k$, depending on the realised
list, clipping is inactive and the mesh $2^{-k}$ is smaller than every positive
separation.  The quantiser then preserves every strict comparison, while ties
are preserved identically.
Applying this pathwise argument separately to the two coordinate lists shows
that, almost surely, the quantised lists
$(X_i^{(k)})_{1\leq i\leq p}$ and $(Y_i^{(k)})_{1\leq i\leq p}$ have exactly
the same complete weak orders as their unquantised counterparts for all
sufficiently large $k$.  Consequently, any bounded kernel determined by these
weak orders is eventually unchanged almost surely, and bounded convergence
applies.

By Lemma~\ref{lem:cut-gap} and \eqref{eq:Da-rectangular}, the bounded kernels
defining $\ts$, $\Ra$ and $\Da$ depend only on the complete weak orders in
the two coordinates.  The stabilisation argument above and bounded
convergence therefore prove all three limits.

It remains to treat the boundary functionals.  Fix
$\epsilon,\eta\in\{<,\leq\}$.  For $(x,y)\in\R^2$, put
\[
\begin{aligned}
 C_1(x,y)
 &=\iota_\epsilon(X_2,x)
   \{\iota_\eta(Y_2,y)-\iota_\eta(Y_4,y)\},\\
 C_2(x,y)
 &=\iota_\epsilon(X_3,x)
   \{\iota_\eta(Y_3,y)-\iota_\eta(Y_5,y)\}.
\end{aligned}
\]
For fixed $(x,y)$, the variables $C_1(x,y)$ and $C_2(x,y)$ are independent
and both have mean $\Delta_{\epsilon,\eta}(x,y)$.  Since $(X_1,Y_1)$ and
$(X_1,Y_6)$ are independent of the copies used in $C_1$ and $C_2$, while the
components of $(X_1,Y_6)$ are independent, it follows that
\[
 D_{\epsilon,\eta}=\E\{C_1(X_1,Y_1)C_2(X_1,Y_1)\},\qquad
 R_{\epsilon,\eta}=\E\{C_1(X_1,Y_6)C_2(X_1,Y_6)\}.
\]
The first product is a bounded five-copy kernel and the second a bounded
six-copy kernel.  Both depend only on complete weak orders, so the stabilisation
argument above proves all eight boundary limits.
\end{proof}

\begin{remark}[Why convergence alone is insufficient]
The limits above do not follow merely from $X^{(k)}\to X$ and $Y^{(k)}\to Y$
almost surely, because the kernels are discontinuous at ties.  For example,
let $Z$ be uniform on $\{1,2\}$, let $U$ be uniform on $[-1,1]$ independently
of $Z$, and set $Z_n=Z+\varepsilon_nU$ for $\varepsilon_n\downarrow0$.
Then $Z_n\to Z$ almost surely, whereas a direct calculation gives
$\ts(Z,Z)=1/4$ and $\ts(Z_n,Z_n)=2/3$ for every $n$.
The proof works because, on each finite sample, quantisation eventually
preserves the weak orders exactly, making the relevant kernels identical
rather than merely close.
\end{remark}

\subsection{Completion and sharpness}

\begin{proof}[Proof of Theorems~\ref{thm:main} and~\ref{thm:four-boundary}]
\noindent\emph{Boundary inequalities and nonnegativity.}
The atomless identifications and the decomposition
\eqref{eq:tau-decomposition} were established above.
For each $k$, discard unattained grid levels, order the attained row and
column values, and relabel them increasingly and consecutively as
$1,\ldots,m_k$ and $1,\ldots,n_k$.  All the kernel and boundary functionals
involved are unchanged by this strictly increasing relabelling, so the law of
$(X^{(k)},Y^{(k)})$ becomes a finite probability table to which
Propositions~\ref{prop:finite-R} and~\ref{prop:finite-D} apply.  Letting
$k\to\infty$ and using Lemma~\ref{lem:quantization} proves
\eqref{eq:four-boundary-main}.  Since every boundary functional is
nonnegative, it also proves $\Da(X,Y)\geq0$ and $\Ra(X,Y)\geq0$.  The
strengthened lower bound for $\ts$ follows from
\eqref{eq:tau-decomposition}.

\medskip
\noindent\emph{The zero sets of $\Ra$ and $\ts$.}
Under independence, the defining expectations factor into products of the
centred marginal expectations, so $\ts=\Da=\Ra=0$.  Conversely, if $\Ra=0$, then
\eqref{eq:four-boundary-main} gives $R_{\leq,\leq}=0$, hence independence
because $R_{\leq,\leq}=R$ has zero set independence
\citep[Thm.~2]{WDM}.  Thus $\Ra=0$ if and only if $X\indep Y$.  Since
$\Da,\Ra\geq0$, \eqref{eq:tau-decomposition} gives $\ts\geq0$, with equality
if and only if $X\indep Y$.

\medskip
\noindent\emph{The $\Da$ zero set for absolutely continuous laws.}
If the joint law of $(X,Y)$ is absolutely continuous, then its margins are
atomless and hence $\Da=D$.  Assuming only this joint absolute continuity,
with no positivity or continuity requirement on the density,
\citet[Prop.~3 and proof, pp.~58--59]{Yanagimoto} proves that his
$M_1(F)=D$ vanishes if and only if $F(x,y)=F_X(x)F_Y(y)$ for every
$(x,y)\in\R^2$, which is independence.  Together with the converse established
above, this proves the stated zero-set characterisation in the absolutely
continuous case.

\medskip
\noindent\emph{The $\Da$ zero set for purely atomic laws.}
It remains to prove the stated refinement for purely atomic laws.  Suppose
that the law of $(X,Y)$ is purely atomic and $\Da=0$.  Since each
$D_{\epsilon,\eta}$ is nonnegative, \eqref{eq:four-boundary-main} gives
$D_{\epsilon,\eta}=0$ for every $\epsilon,\eta\in\{<,\leq\}$.
Let
\[
 A=\{x:\Pr(X=x)>0\},\qquad B=\{y:\Pr(Y=y)>0\},
\]
and write $r_x=\Pr(X=x)$, $c_y=\Pr(Y=y)$ and
$p_{xy}=\Pr(X=x,Y=y)$, and put
$S=\{(x,y)\in A\times B:p_{xy}>0\}$.  The sets $A$ and $B$ are at most
countable, and
\[
 D_{\epsilon,\eta}
 =\sum_{(x,y)\in S}p_{xy}\Delta_{\epsilon,\eta}(x,y)^2.
\]
Hence all four discrepancies vanish at every $(x,y)\in S$.
Inclusion--exclusion then gives
\[ p_{xy}-r_xc_y=\Delta_{\leq,\leq}(x,y)-\Delta_{<,\leq}(x,y)-\Delta_{\leq,<}(x,y)+\Delta_{<,<}(x,y)=0. \]
Consequently,
\[
 1=\sum_{(x,y)\in S}p_{xy}
  =\sum_{(x,y)\in S}r_xc_y
  \leq\sum_{(x,y)\in A\times B}r_xc_y=1.
\]
Since $r_xc_y>0$ throughout $A\times B$, equality forces $S=A\times B$.
Thus $p_{xy}=r_xc_y$ throughout $A\times B$, proving independence.  The
converse was established above.  This completes the qualitative assertions
of Theorem~\ref{thm:main}.

\medskip
\noindent\emph{Sharp constants.}
It remains to verify sharpness.  For a $2\times2$ table
$P=(p_{ij})_{i,j=1}^2$, put
$\theta=p_{11}p_{22}-p_{12}p_{21}$.  The finite formulas give
\[
 \ts=4\theta^2,\qquad \Ra=\Da=\frac{\theta^2}{9},
\]
and
\[
 D_{\leq,\leq}=p_{11}\theta^2,\quad
 D_{\leq,<}=p_{12}\theta^2,\quad
 D_{<,\leq}=p_{21}\theta^2,\quad
 D_{<,<}=p_{22}\theta^2.
\]
Likewise,
\[
 R_{\leq,\leq}=r_1c_1\theta^2,\quad
 R_{\leq,<}=r_1c_2\theta^2,\quad
 R_{<,\leq}=r_2c_1\theta^2,\quad
 R_{<,<}=r_2c_2\theta^2.
\]
Both boundary sums therefore equal $\theta^2=9\Ra=9\Da$.  Thus equality
holds in both inequalities in \eqref{eq:four-boundary-main} for every
dependent $2\times2$ table, proving that both constants $9$ are best possible.

For the coefficient $24$, \citet[Remark~1, p.~59]{Yanagimoto}
constructs a continuous bivariate cdf $F_0$ with both margins uniform on
$(0,1)$ and with Hoeffding functional $D(F_0)=0$, but
\[
 F_0\!\left(\frac12,\frac14\right)\ne\frac18
 =F_{0,X}\!\left(\frac12\right)F_{0,Y}\!\left(\frac14\right),
\]
so the law is dependent.  Since its margins are atomless, $\Da=D=0$; by the
absolutely continuous zero-set result just proved, its joint law cannot be
absolutely continuous.  Its uniform margins also rule out a purely atomic
joint law, so the example lies outside the two subclasses for which $\Da$
characterises independence.  Dependence and the equivalence
$\Ra=0\Longleftrightarrow X\indep Y$ give $\Ra>0$.  The identity
\eqref{eq:tau-decomposition} therefore becomes
$\ts=24\Ra$, proving
sharpness of $24$, even with the four-boundary term retained.
Moreover, apply the quantisers above to this Yanagimoto law.  By
Lemma~\ref{lem:quantization},
\[
 \Da(X^{(k)},Y^{(k)})\longrightarrow0,
 \qquad
 \Ra(X^{(k)},Y^{(k)})\longrightarrow\Ra(X,Y)>0.
\]
Since their $\Ra$-values are eventually positive, the quantised laws are
finite and dependent for all sufficiently large $k$, and
\[
 \frac{\ts(X^{(k)},Y^{(k)})}{\Ra(X^{(k)},Y^{(k)})}
 =24+12\frac{\Da(X^{(k)},Y^{(k)})}{\Ra(X^{(k)},Y^{(k)})}
 \longrightarrow24.
\]
Thus $24$ is best possible even over finite probability tables.  On the other
hand, the purely atomic zero-set result and \eqref{eq:tau-decomposition} show
that the inequality is strict for every dependent purely atomic law.
\end{proof}

\section{Further consequences and comparison}\label{sec:further-consequences}

We conclude with a monotone-association analogue and its quadratic bounds, the
effect of ties on normalisation, further structural and multivariate
consequences, and an outlook.

\subsection{A monotone-association analogue}
\label{subsec:monotone-association}

Write $s_Z(ij)=\sgn(Z_i-Z_j)$.  Kendall's tau and the sign version of
Spearman's rho are
\[
 \tau(X,Y)=\E\{s_X(12)s_Y(12)\},\qquad
 \rho_S(X,Y)=3\E\{s_X(12)s_Y(13)\}.
\]
Define
\[
 \upsilon(X,Y)=\frac34\E\bigl[
   \{s_X(12)-s_X(13)\}\{s_Y(12)-s_Y(13)\}\bigr].
\]
Exchangeability and expansion of the product give, for every bivariate law,
\begin{equation}\label{eq:linear-tau-decomposition}
 \boxed{\qquad \tau(X,Y)=\frac13\rho_S(X,Y)+\frac23\upsilon(X,Y).\qquad}
\end{equation}
This is a conditional-covariance analogue of
Lemma~\ref{lem:pair-projection}.
By replicate independence, conditioning $U=s_X(12)$ and $V=s_Y(12)$ on
$(X_1,Y_1)$ identifies the product-of-conditional-means term as
$\E\{s_X(12)s_Y(13)\}=\rho_S/3$ and the conditional-covariance term
$2\upsilon/3$.
All three functionals vanish under independence.  Writing
$p_z=\Pr(Z=z)$, with sums over the atoms of $Z$, direct conditioning gives
$\tau(Z,Z)=1-\sum_zp_z^2$, $\rho_S(Z,Z)=1-\sum_zp_z^3$, and
$\upsilon(Z,Z)=1-\tfrac12\sum_zp_z^2(3-p_z)$.  Their self-values therefore lie
in $[0,1]$.  Cauchy--Schwarz in the three product representations (conditioning
first on $(X_1,Y_1)$ for $\rho_S$) yields
$|T(X,Y)|\leq\{T(X,X)T(Y,Y)\}^{1/2}\leq1$ for
$T\in\{\tau,\rho_S,\upsilon\}$.
When the marginals are atomless and $Y$ is a strictly increasing or strictly
decreasing function of $X$, all three equal $1$ or $-1$, respectively.

For atomless marginals, the four strict/non-strict cdf discrepancies defined
above coincide; write
$\Delta(x,y)=\Delta_{\leq,\leq}(x,y)=F(x,y)-F_X(x)F_Y(y)$.
The two components and their sum have
the integral representations
\[
 \begin{gathered}
  \frac23\upsilon=4\int_{\R^2}\Delta\,dF,
  \qquad
  \frac13\rho_S=4\int_{\R^2}\Delta\,dF_X\,dF_Y,\\
  \tau=4\int_{\R^2}\Delta\,dF
       +4\int_{\R^2}\Delta\,dF_X\,dF_Y.
 \end{gathered}
\]
The displayed identity for $\tau$ is the linear monotone-association analogue
of \eqref{eq:tau-decomposition}, and both identities arise from the same
projection principle.  For $\tau$, however, the projected cross-components
are signed and may cancel.  Thus the nonnegative, cancellation-free form of
the $\ts$ decomposition is kernel-specific rather than a generic consequence
of orthogonality.  With ties, all three are unnormalised sign functionals:
$\tau$ is Kendall's $\tau_a$, while $\rho_S$ and $\upsilon$ are defined by the
displays above without marginal tie normalisation; see \citet{Neslehova} for
rank-correlation measures for non-continuous variables.

\subsection{Quadratic bounds from monotone association}

\begin{proposition}[Quadratic bounds from monotone association]
\label{prop:monotone-square-bounds}
\textup{(i)} If both margins are atomless, then
\[
 \Da\geq\frac{\upsilon^2}{36},
 \qquad
 \Ra\geq\frac{\rho_S^2}{144}.
\]
In fact, with $\Delta$ as above, the following exact remainder identity holds:
\begin{equation}\label{eq:atomless-tau-square-decomposition}
 \ts=\frac12\tau^2+\frac14(\rho_S-\tau)^2
 +12\int_{\R^2}\left(\Delta-\frac{\upsilon}{6}\right)^2\,dF
 +24\int_{\R^2}\left(\Delta-\frac{\rho_S}{12}\right)^2
       \,dF_X\,dF_Y.
\end{equation}
Consequently,
\begin{equation}\label{eq:atomless-tau-square}
 \boxed{\qquad
 \ts\geq\frac12\tau^2+\frac14(\rho_S-\tau)^2
 \geq\frac12\tau^2.
 \qquad}
\end{equation}

\textup{(ii)} For every bivariate law on $\R^2$,
\begin{equation}\label{eq:component-square-bounds}
 \boxed{\qquad
  \Da\geq\frac{\upsilon^2}{81},
  \qquad
  \Ra\geq\frac{\rho_S^2}{324}.
 \qquad}
\end{equation}
Consequently,
\begin{equation}\label{eq:general-tau-square}
 \boxed{\qquad
  \ts\geq
  \frac{2\tau^2+(\rho_S-\tau)^2}{9}
  \geq\frac29\tau^2.
 \qquad}
\end{equation}
The two constants in \eqref{eq:component-square-bounds}, and the first bound
in \eqref{eq:general-tau-square}, are sharp.  The stronger atomless bound
\eqref{eq:atomless-tau-square} does not extend to arbitrary laws.  If $c_*$ is
the best universal constant in $\ts\geq c_*\tau^2$, the present bounds show
$2/9\leq c_*\leq1/4$ but do not determine its exact value.
\end{proposition}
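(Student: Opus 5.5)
For part~(i) I will work from the atomless integral representations displayed just before the proposition. In that case $\Delta_{\epsilon,\eta}=\Delta$ for all four boundary choices, $\Da=D=\int\Delta^2\,dF$ and $\Ra=R=\int\Delta^2\,dF_X\,dF_Y$ (Theorem~\ref{thm:main}(iii)). The same display gives
\[
 \int\Delta\,dF=\upsilon/6,
 \qquad
 \int\Delta\,dF_X\,dF_Y=\rho_S/12 .
\]
Cauchy--Schwarz (Jensen) against the probability measures $dF$ and $dF_X\,dF_Y$ then gives $\Da\geq\upsilon^2/36$ and $\Ra\geq\rho_S^2/144$. For the exact identity \eqref{eq:atomless-tau-square-decomposition}, I expand the two squares, which gives
\[
 12\int(\Delta-\upsilon/6)^2\,dF=12D-\upsilon^2/3,
 \qquad
 24\int(\Delta-\rho_S/12)^2\,dF_X\,dF_Y=24R-\rho_S^2/6 .
\]
I then use $\ts=12D+24R$ together with \eqref{eq:linear-tau-decomposition}, written as $\upsilon=(3\tau-\rho_S)/2$. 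The algebraic check is
\[
 \frac{(3\tau-\rho_S)^2}{12}+\frac{\rho_S^2}{6}
 =\frac34\tau^2-\frac12\tau\rho_S+\frac14\rho_S^2
 =\frac12\tau^2+\frac14(\rho_S-\tau)^2 .
\]
Dropping the two integrals gives \eqref{eq:atomless-tau-square}.

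For part~(ii) I replace the atomless identifications by Theorem~\ref{thm:four-boundary}. The key step is a tie-aware version of the linear identities, obtained from $\sgn(X_1-x)=1-\sum_\epsilon\iota_\epsilon(X_1,x)$ (so $s_X(12)=\sum_\epsilon\iota_\epsilon(X_2,X_1)-1$, and similarly for $Y$).
\begin{itemize}[leftmargin=2em]
\item[] \emph{Identity for $\upsilon$.} Condition $\{s_X(12)-s_X(13)\}\{s_Y(12)-s_Y(13)\}$ on $(X_1,Y_1)$. The constants cancel, and the independent copies $2,3$ give $2\sum_{\epsilon,\eta}\Delta_{\epsilon,\eta}(X_1,Y_1)$. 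Hence $\sum_{\epsilon,\eta}\E\,\Delta_{\epsilon,\eta}(X,Y)=2\upsilon/3$.
\item[] \emph{Identity for $\rho_S$.} Write $\rho_S/3=\E\{\sgn(X_1-X_2)\sgn(Y_1-Y_3)\}$ and condition on $(x,y)=(X_2,Y_3)$, which is drawn from the product margins. The cross term factorises into $\E\,s_X(12)\cdot\E\,s_Y(13)=0$. Hence $\sum_{\epsilon,\eta}\int\Delta_{\epsilon,\eta}\,dP_X\,dP_Y=\rho_S/3$.
\end{itemize}
Then Theorem~\ref{thm:four-boundary}, Jensen for each $\Delta_{\epsilon,\eta}$, and Cauchy--Schwarz over the four terms give
\[
 9\Da\geq\sum_{\epsilon,\eta} D_{\epsilon,\eta}
 \geq\sum_{\epsilon,\eta}\bigl(\E\,\Delta_{\epsilon,\eta}\bigr)^2
 \geq\frac14\Bigl(\frac{2\upsilon}3\Bigr)^2 ,
\]
so $\Da\geq\upsilon^2/81$. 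In the same way $9\Ra\geq\frac14(\rho_S/3)^2$, so $\Ra\geq\rho_S^2/324$. Finally, $\ts=12\Da+24\Ra\geq\frac4{27}\upsilon^2+\frac2{27}\rho_S^2$, and substituting $\upsilon=(3\tau-\rho_S)/2$ yields $\{2\tau^2+(\rho_S-\tau)^2\}/9$.

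For sharpness I will use dependent $2\times2$ tables. There the four boundary bounds of Theorem~\ref{thm:four-boundary} are equalities, as shown in the completion of its proof. Each $\Delta_{\epsilon,\eta}$ should equal the same constant $\pm\theta$ on the support of $P$ (respectively of $P_X\otimes P_Y$); I will verify this directly from the table. If so, the Jensen and four-term Cauchy--Schwarz steps are also tight, which gives equality in \eqref{eq:component-square-bounds} and hence in the first bound of \eqref{eq:general-tau-square}.

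The main obstacle is the two negative assertions: that \eqref{eq:atomless-tau-square} fails for some law with ties, and that $c_*\leq1/4$. Neither $2\times2$ tables ($\ts=\tau^2$) nor the diagonal $3\times3$ table violates \eqref{eq:atomless-tau-square}, so a targeted example is needed. I would first search small tied tables, for instance $3\times3$ or $2\times3$ tables with a heavy middle atom, or Bernoulli–uniform mixtures. In such tables the finite formulas \eqref{eq:master-Ra}--\eqref{eq:master-Da} make $\ts$ explicit, and one can look for a member with $\ts<\tfrac12\tau^2$. Driving $\ts/\tau^2$ down to $1/4$ along such a family would simultaneously give $c_*\leq1/4$.
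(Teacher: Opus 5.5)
Your part~(i) and the lower bounds in part~(ii) are correct and follow the paper's route. Your two linear identities are the paper's integrals of $\operatorname{Cov}\{\sgn(x-X),\sgn(y-Y)\}$. Your Jensen-plus-four-term Cauchy--Schwarz chain is the paper's orthogonal projection of $(\Delta_{\epsilon,\eta})_{\epsilon,\eta}$ onto common constants.

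The gap is in the sharpness part, and your $2\times2$ family cannot close it. The four discrepancies are not constant on the support. With the boundary convention $x_{0b}=x_{2b}=x_{a0}=x_{a2}=0$, the residuals at cell $(i,j)$ are $(x_{ij},x_{i-1,j},x_{i,j-1},x_{i-1,j-1})$, and only $x_{11}=\theta$ is nonzero. So each $\Delta_{\epsilon,\eta}$ equals $\theta$ at exactly one cell and $0$ at the other three. Your Jensen step and your four-term Cauchy--Schwarz step are therefore both strict. Explicitly, $\tau=2\theta$, $\rho_S=3\theta$, $\upsilon=3\theta/2$, $\Da=\Ra=\theta^2/9$ and $\ts=4\theta^2$, so
\[
 \frac{81\Da}{\upsilon^2}=\frac{324\Ra}{\rho_S^2}
 =\frac{9\ts}{2\tau^2+(\rho_S-\tau)^2}=4
\]
for every dependent $2\times2$ table. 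Equality in the four-boundary inequalities is not enough. You also need all four discrepancies to be nearly one common constant on almost all of the relevant mass.

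The two negative assertions are left as a search plan, so they are not proved either: that $\ts\geq\frac12\tau^2$ fails under ties, and that $c_*\leq1/4$. The paper settles all sharpness claims with one family, the three-point perturbation of independence
\[
 P_{\varepsilon,t}=r_\varepsilon r_\varepsilon^{\mathsf T}
 +t\begin{pmatrix}-1&0&1\\0&0&0\\1&0&-1\end{pmatrix},
 \qquad r_\varepsilon=(\varepsilon,1-2\varepsilon,\varepsilon)^{\mathsf T},
 \qquad t=\varepsilon^3 .
\]
Here $\Xc=-t\,\mathbf 1\mathbf 1^{\mathsf T}$, so at the dominant middle atom all four residuals equal $-t$; your heavy-middle-atom instinct was the right one.

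The single gap correction in Proposition~\ref{prop:master} vanishes because $K(c^{(1)})=4\varepsilon^2K(r_\varepsilon)$. This gives $\Da=\Ra=4t^2(1-2\varepsilon^2)^2/9$, while $\tau=-8t(1-\varepsilon)^2$, $\rho_S=-12t(1-\varepsilon)^2$ and $\upsilon=-6t(1-\varepsilon)^2$. All three ratios above then equal $(1-2\varepsilon^2)^2/(1-\varepsilon)^4\to1$. Moreover $\ts/\tau^2\to1/4<1/2$, which gives both negative assertions at once.
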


\begin{proof}
For part~(i), the preceding integral representations and the atomless
identities $\Da=D$, $\Ra=R$ give the exact constant-projection identities
\begin{equation}\label{eq:constant-projections}
 \Da=D=\frac{\upsilon^2}{36}
   +\int_{\R^2}\left(\Delta-\frac{\upsilon}{6}\right)^2\,dF,
 \qquad
 \Ra=R=\frac{\rho_S^2}{144}
   +\int_{\R^2}\left(\Delta-\frac{\rho_S}{12}\right)^2\,dF_X\,dF_Y.
\end{equation}
The identities in \eqref{eq:constant-projections} give the two component
bounds.  Substitution in
\eqref{eq:tau-decomposition}, followed by
$\upsilon=(3\tau-\rho_S)/2$ from
\eqref{eq:linear-tau-decomposition}, gives
\eqref{eq:atomless-tau-square-decomposition}.  Dropping its two nonnegative
remainders proves \eqref{eq:atomless-tau-square}.

For part~(ii), summing the four boundary discrepancies gives
$\operatorname{Cov}\{\sgn(x-X),\sgn(y-Y)\}$.  Its integrals under the joint
law and the product of the margins are $2\upsilon/3$ and $\rho_S/3$,
respectively.  Projection onto the one-dimensional subspace of common
constants gives, by Theorem~\ref{thm:four-boundary},
\[
 9\Da\geq\sum_{\epsilon,\eta}D_{\epsilon,\eta}
       \geq\frac{\upsilon^2}{9},
 \qquad
 9\Ra\geq\sum_{\epsilon,\eta}R_{\epsilon,\eta}
       \geq\frac{\rho_S^2}{36}.
\]
This proves \eqref{eq:component-square-bounds}; combining those bounds with
\eqref{eq:tau-decomposition} and
$\upsilon=(3\tau-\rho_S)/2$ proves \eqref{eq:general-tau-square}.  A three-point
perturbation of independence makes the two component bounds and the first
inequality in \eqref{eq:general-tau-square} asymptotically equalities and has
$\ts/\tau^2\to1/4$.  This establishes sharpness and shows why the stronger
atomless coefficient cannot hold for arbitrary laws.  The exact projection
identities and the calculation for this family are given in Section~3 of the
supplement.
\end{proof}

\subsection{Ranges and normalisation under ties}
\label{subsec:ranges-normalisation}

The constants used in the classical normalisation have different statuses for
the three functionals under ties.  The scales for $\ts$ and $\Da$ remain their
global sharp scales.  For $\Ra$, the classical atomless scale $1/90$ is not a
global upper bound; we give a universal upper bound below, but do not claim
that it is sharp.

\begin{proposition}[Ranges and scales under ties]\label{prop:sharp-scales}
For every bivariate law on $\R^2$,
\begin{equation}\label{eq:range-bounds}
 \begin{gathered}
  0\leq\ts(X,Y)\leq\frac23,
  \qquad 0\leq\Da(X,Y)\leq\frac1{30},\\
  0\leq\Ra(X,Y)
  \leq\{\Ra(X,X)\Ra(Y,Y)\}^{1/2}
  \leq\frac1{36}.
 \end{gathered}
\end{equation}
The first two upper bounds are attained when the marginals are atomless and one
variable is a strictly monotone function of the other; $1/36$ is not asserted
to be sharp.  Nevertheless, the
classical atomless value $1/90$ is not an upper bound for $\Ra$.  If $Z$ is uniform on
$\{1,\ldots,m\}$ for a positive integer $m$, then
\begin{equation}\label{eq:Ra-uniform-range-counterexample}
 \Ra(Z,Z)
 =\frac1{90}
  +\frac{m^4-15m^3+25m^2-18m+4}{270m^5}.
\end{equation}
 Consequently, $\Ra(Z,Z)>1/90$ for every integer $m\geq14$, and $14$ is
 minimal.  At $m=14$, $\Ra(Z,Z)=1/90+53/4033680>1/90$.
\end{proposition}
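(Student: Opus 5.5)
The plan is to take the three lines of \eqref{eq:range-bounds} in turn, then the explicit uniform-law formula.

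\emph{Bounds on $\ts$ and $\Da$.} Lower bounds are Theorem~\ref{thm:main}(i). For the upper bounds I use Cauchy--Schwarz in the product representations. For $\ts$, the definition \eqref{eq:tau-def} gives $\ts(X,Y)\leq\{\ts(X,X)\ts(Y,Y)\}^{1/2}$. Also $\ts(Z,Z)=\E\{a(Z_1,\dots,Z_4)^2\}=\Pr(a\neq0)$. By the case table, $a\neq0$ exactly when $12\parallel34$ or $13\parallel24$. Since $a$ depends only on the weak order (Lemma~\ref{lem:cut-gap}), ties can only destroy strict separations. So this probability is at most its atomless value, which is $16/24=2/3$ by counting permutations of four distinct points (eight orders separate $\{1,2\}$ from $\{3,4\}$ and eight separate $\{1,3\}$ from $\{2,4\}$). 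The argument for $\Da$ via \eqref{eq:Da} is similar: $\Da(X,Y)\leq\{\Da(X,X)\Da(Y,Y)\}^{1/2}$. I would then bound $\Da(Z,Z)=\E\ell_Z^2=\tfrac1{36}\E\delta_Z^2-2\E k_Z^2$, using $3\ts(Z,Z)=\E\delta_Z^2$ and the decomposition $\ts(Z,Z)=12\Da(Z,Z)+24\Ra(Z,Z)$.

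The direct route $\Da(Z,Z)\leq\ts(Z,Z)/12\leq1/18$ is too weak, so I need $\ts(Z,Z)-24\Ra(Z,Z)\leq 2/5$. This is the main obstacle. My first attempt is to quantise $Z$ and express $\Da(Z,Z)$ for a finite law $w$ through Proposition~\ref{prop:master} with $P=\operatorname{diag}(w)$. The goal is to show it is maximised, as $w$ is refined, by the atomless limit $D(U,U)=1/30$. Concretely, I would show that merging two adjacent atoms does not increase $\Da(Z,Z)$: write $\Da(Z,Z)$ as an explicit symmetric function of $w$ and check monotonicity under splitting. The limit $1/30$ is then reached via Lemma~\ref{lem:quantization}. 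Attainment under strict monotone dependence with atomless margins follows from $\Da=D$, $\Ra=R$ and the classical values $D=1/30$, $R=1/90$, which give $\ts=12/30+24/90=2/3$.

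\emph{Bounds on $\Ra$.} Cauchy--Schwarz in $\Ra(X,Y)=\E\{k_X(X_1,X_2)k_Y(Y_1,Y_2)\}$ gives the middle inequality, with $\Ra(Z,Z)=\E k_Z^2$. From its definition $|k_Z(z,z')|\leq\frac13\Pr\{a(z,U,z',V)\neq0\}$. Conditioning on $Z_1,Z_2$ and applying Jensen gives $\E k_Z^2\leq\frac19\E[\Pr(a\neq0\mid Z_1,Z_2)^2]$. A cruder route is $\E k_Z^2=\frac1{36}\E[\E(\delta_Z\mid Z_1,Z_2)^2]\leq\frac1{36}\E\delta_Z^2/\ldots$, which I would sharpen to $1/36$ by noting that $|\E(\delta_Z\mid Z_1,Z_2)|\leq 6|k_Z|$ and $|k_Z|\leq 1/6$. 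The last bound holds because $\Pr\{a(z,U,z',V)\neq0\}\leq\frac12$: for fixed $z<z'$ one needs both $U$ and $V$ on suitable sides, and a short case check should give this. That yields $1/36$.

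\emph{Uniform law.} For $Z$ uniform on $\{1,\dots,m\}$, I use the corollary formula \eqref{eq:master-Ra} with $P=\frac1m I_m$, $r=c=\frac1m\mathbf 1$, and $x_{ab}=\frac{\min(a,b)}m-\frac{ab}{m^2}$. Then $9\Ra=\operatorname{tr}\{K\Xc K\Xc\}$ with explicit bidiagonal $G$, and the sums are polynomial in $m$; evaluating symbolically gives \eqref{eq:Ra-uniform-range-counterexample}. Finally, I examine the quartic $m^4-15m^3+25m^2-18m+4$: it is negative for $m\leq13$ by direct evaluation, and positive for $m\geq14$ because $m^4-15m^3\geq -m^3$ there and the remaining terms dominate. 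At $m=14$ it equals $2744-2940+\dots$; I would evaluate this carefully to get the stated $53/4033680$.
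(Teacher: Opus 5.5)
\textbf{Gap in the $1/36$ bound.} Your derivation of $\Ra(Z,Z)\leq 1/36$ does not work. The pointwise bound $|k_Z|\leq 1/6$ is false, and so is the claim behind it, $\Pr\{a(z,U,z',V)\neq0\}\leq\frac12$. By the case table, $a(z,U,z',V)=1$ whenever $U$ and $V$ both lie strictly above $\max(z,z')$. That event has probability close to $1$ when $z,z'$ sit near the bottom of the support. Concretely:
\begin{itemize}
\item For atomless $Z$, \eqref{eq:k-atomless} gives $k_Z=\frac{u^2+v^2}{2}-\max(u,v)+\frac13$, which tends to $\frac13$ as $u,v\to0$. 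For example, $u=0.01$ and $v=0.02$ give $k_Z\approx0.314$.
\item For a finite law, the bottom atom gives $3k_W(w_1,w_1)=(1-p_1)^2$.
\end{itemize}
No sup-norm argument can succeed. In the atomless case $\E k_Z^2=R(Z,Z)=1/90$, while $k_Z$ comes arbitrarily close to $1/3$. Your Jensen route only yields $\E k_Z^2\leq\frac19\Pr(a\neq0)\leq\frac2{27}$. The bound must come from the $L^2$ structure, and you already have the ingredients. Theorem~\ref{thm:main}(i) gives $\Da(Z,Z)\geq0$, and the decomposition then gives $24\Ra(Z,Z)\leq\ts(Z,Z)\leq\frac23$. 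This is the paper's one-line proof.

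\textbf{Unproved crux in the $1/30$ bound.} For $\Da\leq1/30$ your plan is the paper's strategy: show $d(Z)=\Da(Z,Z)$ does not decrease when an atom is split, then pass to a limit. However, you leave the crux, the monotonicity itself, as something to ``check''. That is where all the work lies, because the inequality must hold for arbitrary outer atoms. The paper proves it in a separate supplementary lemma, in three steps:
\begin{itemize}
\item Block averages of $k_Z$ over the lower and upper blocks equal those of the three-point law with masses $(l,c,r)$.
\item The local formula $\ts(W,W)=4\sum_i(F_i^2-F_{i-1}^2)(1-F_i)^2$ localises the change in $\ts$. Hence $d(\widetilde Z)-d(Z)$ depends on the outer atoms only through $l$ and $r$.
\item A configuration sum over $\{L,A,B,R\}^8$ then gives $d(\widetilde Z)-d(Z)=ab\,Q(l,a,b,r)$, where $Q$ has nonnegative coefficients.
\end{itemize}
Nothing in your sketch replaces this.

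\textbf{The limit step.} The limit $d(Z_N)\to1/30$ along equal splittings is not Lemma~\ref{lem:quantization} itself. You need that the maximal atom mass tends to $0$, so ties among eight draws become negligible, while untied orders are equiprobable; together these give $d(Z_N)\to d(U)$. Lemma~\ref{lem:quantization} is then needed only to pass from finite support to arbitrary $Z$.

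\textbf{Smaller points.}
\begin{itemize}
\item Your positivity argument for $m\geq14$ fails as written, because $-m^3+25m^2-18m+4<0$ once $m\geq25$. Treat $m=14$ directly, where $p(14)=1908$, and use $m^3(m-15)\geq0$ for $m\geq15$. Alternatively, substitute $m=14+n$.
\item Your tie-breaking argument for $\ts(Z,Z)\leq\frac23$ is valid once formalised by an independent tie-breaking coupling. The paper's route is shorter: the three separation events are disjoint and equiprobable, so $\Pr(E_1\cup E_2)\leq\frac23$.
\item Computing \eqref{eq:Ra-uniform-range-counterexample} via \eqref{eq:master-Ra} is a legitimate alternative to summing $k_Z(i,j)^2$ directly.
\end{itemize}
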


\begin{proof}
The lower bounds in \eqref{eq:range-bounds} follow from
Theorem~\ref{thm:main}.  For a real random variable $Z$, put
$A_Z=a_Z(1234)$ and let $E_1^Z,E_2^Z,E_3^Z$ denote the three separation events
in the case table of Subsection~\ref{subsec:projection-structure}.  Its first
row gives
\[
 A_Z^2=\1(E_1^Z)+\1(E_2^Z).
\]
By exchangeability, the three events have equal probabilities, and they are
mutually exclusive.  Hence, by Cauchy--Schwarz,
\[
 3\E A_Z^2=2\sum_{i=1}^3\Pr(E_i^Z)\leq2,
 \qquad
 \ts(X,Y)\leq\{\E A_X^2\,\E A_Y^2\}^{1/2}\leq\frac23.
\]

The product representation of $\Ra$ and Cauchy--Schwarz give the middle
inequality in \eqref{eq:range-bounds}.  For any real $Z$,
Theorem~\ref{thm:main} gives $\Da(Z,Z)\geq0$, so
\eqref{eq:tau-decomposition} and the just-proved bound for $\ts$ give
$24\Ra(Z,Z)\leq\ts(Z,Z)\leq2/3$, giving the last
inequality.

For the second upper bound, with $H_Z$ as in \eqref{eq:K-rectangular}, set
$d(Z)=\E\{\ell_Z(1234)^2\}=\E H_Z^2=\Da(Z,Z)$.  The product representation
\eqref{eq:Da-rectangular}, Cauchy--Schwarz and Theorem~\ref{thm:main} give
$0\leq\Da(X,Y)\leq\{d(X)d(Y)\}^{1/2}$.  For finitely supported $Z$, repeatedly
split each atom into two equal consecutive subatoms.  The lemma in Section~7
of the supplement gives $d(Z)\leq d(Z_N)$, while
the largest mass of $Z_N$ is $m_N=2^{-N}\max_z\Pr(Z=z)\to0$.  If $T_N$ is the
event of a tie among eight iid draws, then
$\Pr(T_N)\leq\binom82m_N\to0$.  Off $T_N$, $H_{Z_N}$ depends only on the
relative ranks, whose orders are equiprobable.  Hence, for $U$ uniform on
$(0,1)$, the bound $|H_Z|\leq2/3$ gives
$|d(Z_N)-d(U)|\leq(8/9)\Pr(T_N)\to0$.  Therefore
$d(Z)\leq d(U)=D(U,U)=\int_0^1u^2(1-u)^2\,du=1/30$.
For arbitrary $Z$, Lemma~\ref{lem:quantization} gives finite-support
$Z^{(k)}=q_k(Z)$ with $d(Z^{(k)})\to d(Z)$, so the bound passes to the limit.

For atomless $Z$, exactly one of $E_1^Z,E_2^Z,E_3^Z$ occurs almost surely, so
$\E A_Z^2=2/3$.  Moreover,
$\Da(Z,Z)=D(Z,Z)=1/30$.  A strictly increasing
transformation preserves every
weak order, while a strictly decreasing one reverses all orders and leaves
\eqref{eq:interval-form} unchanged.  Thus, in the equality case stated in the
proposition, $A_Y=A_X$ and $H_Y=H_X$, so both upper bounds are attained.
Finally, the calculation and sign check in Section~6 of the supplement
give \eqref{eq:Ra-uniform-range-counterexample} and show that its numerator is
positive exactly for integers $m\geq14$.
\end{proof}

Consequently, for arbitrary laws one may still write
\[
 \ts_{\rm N}=\frac32\ts,\qquad
 \Da_{\rm N}=30\Da,\qquad
 \Ra_{\rm c}=90\Ra,
\]
and the decomposition becomes
\begin{equation}\label{eq:normalised-tie-aware}
 \boxed{\qquad
 \ts_{\rm N}=\frac35\Da_{\rm N}+\frac25\Ra_{\rm c}.
 \qquad}
\end{equation}
Here $\ts_{\rm N}$ and $\Da_{\rm N}$ are genuinely maximum-normalised on the
class of all laws, whereas $\Ra_{\rm c}$ is normalised on the classical
atomless scale, satisfies $0\leq\Ra_{\rm c}\leq5/2$, and can exceed one; the
upper bound is not claimed sharp.  Thus
\eqref{eq:normalised-tie-aware} is a convex decomposition of three unit-range
coefficients only in the atomless case.

\subsection{Further structural consequences}

The component $\Da$ does not characterise independence over all laws: the
dependent Yanagimoto law used above has uniform margins and $\Da=0$, so its
dependence is carried entirely by $\Ra$.  The projection-averaged multivariate
$\ts$ of \citet[Def.~7.1]{KBW} inherits the decomposition, nonnegativity and
zero-set results; the proof is given in Section~1 of the supplement.  Section~4
details
two further consequences: each bound
$\Da\geq D_{\epsilon,\eta}/9$, $\epsilon,\eta\in\{<,\leq\}$, is sharp, and
every finite table with a binary margin satisfies $\ts=36\Ra$ and $\Da=\Ra$.

\subsection{Outlook}

We leave open the determination of the sharp global supremum of $\Ra$ over
arbitrary laws with ties.  By Cauchy--Schwarz and the choice $X=Y=Z$, it equals
$\sup_{\mathcal L(Z)}\Ra(Z,Z)$; Prop.~\ref{prop:sharp-scales} places it
in $(1/90,1/36]$.

The projection structure in Lemma~\ref{lem:pair-projection} is generic, but the
nonnegativity of both cross-components is special to the $\ts$ kernel.  A
further direction is to identify other symmetric rank covariances whose
components are nonnegative and interpretable, determine independence, and
admit explicit tie-symmetrised representations and sharp normalisations.

\section*{Acknowledgments}

The authors used OpenAI's Codex and Anthropic's Claude during proof development
and manuscript preparation.  All AI-assisted material was critically reviewed,
substantially revised, and independently verified by the authors, who take full
responsibility for the manuscript.

\section*{Supplementary material}
The supplementary results and calculations follow the references in this
preprint. Exact Python/SymPy verification scripts are supplied as arXiv
ancillary files in \texttt{anc/exact\_algebra/}.

\bibliographystyle{plainnat}
\bibliography{taustar2_refs}

\clearpage
\setcounter{section}{0}
\setcounter{equation}{0}
\setcounter{theorem}{0}
\renewcommand{\theHsection}{supp.\arabic{section}}
\renewcommand{\theHequation}{supp.\arabic{equation}}
\renewcommand{\theHtheorem}{supp.\arabic{theorem}}
\begin{center}
{\Large\bfseries Supplementary results and calculations\par}
\vspace{6pt}
{\large Wicher Bergsma and Angelos Dassios\par}
\end{center}

\begingroup\small
\begin{center}\bfseries Supplement overview\end{center}
\begin{quotation}\noindent
We extend the tie-symmetrised decomposition and its zero-set characterization
to projection-averaged functionals of random vectors.  We also prove
consistency of the permutation test, give the general quadratic
monotone-association bounds and their sharpness, record individual-boundary
sharpness and binary-margin consequences, detail the atomless kernel
identification, give the discrete-uniform calculation used for the range
counterexample, and prove the atom-refinement lemma used to obtain a sharp
global scale in the main paper.  The arXiv ancillary files contain exact-algebra verification scripts.
\end{quotation}\endgroup

The supplement is organised as follows.  Sections~\ref{sec:supp-multivariate}
and~\ref{sec:supp-permutation} give the multivariate extension and permutation
consistency.  Sections~\ref{sec:supp-quadratic} and~\ref{sec:supp-boundary}
establish the additional sharpness and binary-margin results.  The atomless,
discrete-uniform and atom-refinement calculations are given in
Sections~\ref{sec:supp-atomless}--\ref{sec:supp-refinement}, and
Section~\ref{sec:supp-exact-checks} records the exact-algebra checks.

\section{Projection-averaged multivariate extension}
\label{sec:supp-multivariate}

Let $p,q\geq1$, let $X\in\R^p$ and $Y\in\R^q$, and let $\lambda_d$ denote
uniform probability measure on $\mathbb S^{d-1}$.  Following the
projection-averaged multivariate $\ts$ of \citet[Def.~7.1]{KBW}, for
$T\in\{\ts,\Da,\Ra\}$ define
\[
 T_{p,q}(X,Y)
 =\int_{\mathbb S^{p-1}}\int_{\mathbb S^{q-1}}
 T(\alpha^{\mathsf T}X,\beta^{\mathsf T}Y)\,
 d\lambda_q(\beta)d\lambda_p(\alpha).
\]

\begin{proposition}[Projection-averaged extension]
\label{prop:supp-multivariate}
The three projection-averaged functionals are well defined and satisfy
\[
 \ts_{p,q}=12\Da_{p,q}+24\Ra_{p,q},
 \qquad \Da_{p,q},\Ra_{p,q}\geq0.
\]
Moreover,
\[
 \Ra_{p,q}=0\quad\Longleftrightarrow\quad
 \ts_{p,q}=0\quad\Longleftrightarrow\quad X\indep Y.
\]
\end{proposition}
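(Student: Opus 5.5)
The plan has four steps: well-definedness, linearity, nonnegativity, and the zero set.

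\emph{Well-definedness.} For fixed $(\alpha,\beta)$, $T(\alpha^{\mathsf T}X,\beta^{\mathsf T}Y)$ is the expectation of a bounded kernel in iid copies $(\alpha^{\mathsf T}X_i,\beta^{\mathsf T}Y_i)$. For $\ts$ and $\Ra$ this is immediate from \eqref{eq:tau-def} and \eqref{eq:Ra-def}. For $\Da$ it follows from the eight-copy representation \eqref{eq:Da-rectangular} with $|H_Z|\leq2/3$. Each kernel is a finite product of sign functions of linear forms in $(X_i,Y_i)$, so it is jointly Borel in $(\alpha,\beta,\text{data})$. Fubini then shows that $(\alpha,\beta)\mapsto T(\alpha^{\mathsf T}X,\beta^{\mathsf T}Y)$ is measurable and bounded, so the integral over $\mathbb S^{p-1}\times\mathbb S^{q-1}$ exists.

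\emph{Linearity.} The identity \eqref{eq:tau-decomposition} holds for every bivariate law, in particular for each projected pair. Integrating against $\lambda_p\otimes\lambda_q$ gives $\ts_{p,q}=12\Da_{p,q}+24\Ra_{p,q}$.

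\emph{Nonnegativity.} Theorem~\ref{thm:main}(i) gives $\Da,\Ra\geq0$ pointwise in $(\alpha,\beta)$, so both averages are nonnegative.

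\emph{Zero set.} If $X\indep Y$, then $\alpha^{\mathsf T}X\indep\beta^{\mathsf T}Y$ for all directions, and Theorem~\ref{thm:main}(ii) makes every integrand vanish. Conversely, since $\Da_{p,q},\Ra_{p,q}\geq0$, the decomposition gives $\ts_{p,q}=0\Rightarrow\Ra_{p,q}=0$. A nonnegative integrand with zero integral vanishes $\lambda_p\otimes\lambda_q$-almost everywhere, so by Theorem~\ref{thm:main}(ii), $\alpha^{\mathsf T}X\indep\beta^{\mathsf T}Y$ for almost every $(\alpha,\beta)$.

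The main obstacle is upgrading independence on almost every pair of directions to $X\indep Y$. I would use characteristic functions. Write $\phi(s,t)=\E e^{i(s^{\mathsf T}X+t^{\mathsf T}Y)}$ and let $\phi_X,\phi_Y$ be the marginal characteristic functions. For a good pair $(\alpha,\beta)$ and any $u,v\in\R$, independence of the projections gives $\phi(u\alpha,v\beta)=\phi_X(u\alpha)\phi_Y(v\beta)$. Polar coordinates convert the full-measure set of good direction pairs into a full Lebesgue-measure set in $\R^p\times\R^q$: the set of $(s,t)$ whose normalised directions are good, which has full measure because it is a cone over a full-measure set of directions. On this set $\phi(s,t)=\phi_X(s)\phi_Y(t)$. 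Both sides are continuous, and a full-measure set is dense, so the identity holds everywhere. By uniqueness of characteristic functions, $X\indep Y$.

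The degenerate cases $p=1$ or $q=1$ need a check: there $\mathbb S^0=\{\pm1\}$ carries the uniform two-point measure, so the almost-every statement covers both signs and the same argument applies. Finally, $\ts_{p,q}=0\Leftrightarrow\Ra_{p,q}=0$ follows from the decomposition, since $\Ra_{p,q}=0$ forces independence and hence $\ts_{p,q}=0$.
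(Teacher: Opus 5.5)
Your proposal is correct and takes essentially the same route as the paper. Well-definedness comes from joint Borel measurability of the bounded order-four, order-six and order-eight kernels plus Fubini. The decomposition and nonnegativity are integrated from the scalar results. The zero set follows from a characteristic-function argument on an almost-everywhere set of direction pairs. Your polar-coordinates full-measure cone is simply a more explicit version of the paper's density step.

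One small wording fix: the kernels are not literally products of sign functions of linear forms. By Lemma~\ref{lem:cut-gap}, they are functions of the signs of the pairwise differences $\alpha^{\mathsf T}(X_i-X_j)$ and $\beta^{\mathsf T}(Y_i-Y_j)$, and that is all joint measurability requires.
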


\begin{proof}
In the order-four representation of $\ts$, the order-six representation of
$\Ra$, and the rectangular order-eight representation of $\Da$, the
sample-level integrand is a bounded Borel function jointly of
$(\alpha,\beta)$ and the finitely many sample coordinates: it is obtained
from linear forms by finitely many arithmetic operations, absolute values and
the Borel sign function.  Integrating the sample coordinates against the
appropriate product law, the measurable-parameter integral theorem then shows
that
$(\alpha,\beta)\mapsto T(\alpha^{\mathsf T}X,\beta^{\mathsf T}Y)$ is Borel
measurable, so the displayed integrals are well defined.  The scalar
decomposition in the main paper holds for every bivariate law, including laws
with ties.  Integration gives the stated decomposition.
The scalar components are nonnegative, so $\Da_{p,q},\Ra_{p,q}\geq0$ and hence
$\ts_{p,q}\geq0$, without regularity assumptions on the vector law.

Moreover, $\Ra_{p,q}=0$ forces
$\Ra(\alpha^{\mathsf T}X,\beta^{\mathsf T}Y)=0$ for almost every pair of
directions.  The scalar zero-set result then gives independence of the
corresponding projections.  This full-measure set of direction pairs is dense
because the sphere measures have full support.  For every pair in that set
and every $s,t\in\R$, independence gives
\[
 \varphi_{X,Y}(s\alpha,t\beta)
 =\varphi_X(s\alpha)\varphi_Y(t\beta).
\]
The arguments $(s\alpha,t\beta)$ so obtained are dense in
$\R^p\times\R^q$.  Continuity of characteristic functions extends the
factorisation everywhere and proves $X\indep Y$.  The converse follows
because independence is preserved by measurable transformations.  Finally,
$\ts_{p,q}=0$ forces $\Ra_{p,q}=0$ by the decomposition and nonnegativity of
both components, which completes the equivalence.
\end{proof}

\section{Consistency of the permutation test}
\label{sec:supp-permutation}

Let $(X_i,Y_i)$, $i\geq1$, be iid copies of $(X,Y)$.  For $n\geq4$, put
$(n)_4=n(n-1)(n-2)(n-3)$.  With $a$ denoting the four-argument sign kernel
from the main paper, define, for a permutation $\pi$,
\[
 t_{n,\pi}^*=\frac1{(n)_4}
 \sum_{\substack{i_1,i_2,i_3,i_4\\\mathrm{all\ distinct}}}
 a(X_{i_1},X_{i_2},X_{i_3},X_{i_4})
 a(Y_{\pi(i_1)},Y_{\pi(i_2)},Y_{\pi(i_3)},Y_{\pi(i_4)}).
\]
This is the order-four $U$-statistic with the kernel symmetrised over its four
arguments, and the observed statistic is $t_n^*=t_{n,\mathrm{id}}^*$.  Let
$\mathcal D_n=\sigma\{(X_i,Y_i):1\leq i\leq n\}$, and let $\Pi_n$ be uniform
on the permutations of $\{1,\ldots,n\}$, conditionally on $\mathcal D_n$.
For a fixed level $\alpha\in(0,1)$, define
\[
 c_{n,\alpha}
 =\inf\{c:\Pr_{\Pi_n}(t_{n,\Pi_n}^*\leq c\mid\mathcal D_n)
                 \geq1-\alpha\}.
\]
The nonrandomised upper-tail test rejects when $t_n^*>c_{n,\alpha}$.  Data ties
are handled by $\sgn(0)=0$, and equality at the critical value is not rejected;
randomisation there may instead be used to attain exact conditional level.

\begin{proposition}[Consistency of the permutation test]
\label{prop:supp-permutation}
Under independence, this test has level at most $\alpha$ for every $n\geq4$.
Under every fixed dependent alternative,
\[
 \Pr\{t_n^*>c_{n,\alpha}\}\longrightarrow1.
\]
\end{proposition}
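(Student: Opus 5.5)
The plan has two parts: finite-sample validity from exchangeability, and consistency from a law of large numbers for the observed statistic together with a concentration bound for the permutation distribution.

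\emph{Validity.} Under independence, the $n$ pairs $(X_i,Y_{\pi(i)})$ have the same joint law for every fixed $\pi$. Hence the vector $(t^*_{n,\pi})_\pi$, indexed by all permutations, is invariant in law under right composition by any fixed permutation. The standard randomisation argument (Lehmann--Romano) then gives
\[
 \Pr(t_n^*>c_{n,\alpha})\leq\alpha.
\]
This argument uses no assumption on the margins. Ties are harmless: $a$ is a fixed bounded Borel function, and the nonrandomised rule, which does not reject at equality with $c_{n,\alpha}$, can only be conservative.

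\emph{Consistency.} First, $t_n^*$ is a bounded $U$-statistic, so Hoeffding's strong law gives $t_n^*\to\ts(X,Y)$ almost surely. Under a dependent alternative, Theorem~\ref{thm:main}(ii) gives $\ts(X,Y)>0$; this is exactly where the universal zero-set result is used.

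Second, I will show that $c_{n,\alpha}\to0$ in probability. For this it suffices to show $\E\{(t^*_{n,\Pi_n})^2\mid\mathcal D_n\}\to0$ in probability, and then apply conditional Chebyshev: $\Pr(t^*_{n,\Pi_n}>\varepsilon\mid\mathcal D_n)<\alpha$ eventually, so $c_{n,\alpha}\leq\varepsilon$.

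To bound the conditional second moment, expand it as a double sum over index quadruples $(i_1,\dots,i_4)$ and $(j_1,\dots,j_4)$. Average over the uniform permutation $\Pi_n$. Quadruple pairs sharing no index contribute $O(1/n)$ in total relative to the rest. Pairs of disjoint quadruples are mapped by $\Pi_n$ to disjoint (conditionally near-uniform) $Y$-quadruples. For these, the $Y$-factor averages to approximately
\[
 \bar a_Y^2,\qquad \bar a_Y=\frac{1}{(n)_4}\sum_{\text{distinct}}a(Y_{j_1},\dots,Y_{j_4}),
\]
up to $O(1/n)$ corrections from sampling without replacement. The $X$-factor is handled in the same way.

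The key input is that the empirical mean of the antisymmetric kernel $a$ is exactly zero. Indeed, $a(z)$ changes sign under interchange of the second and third slots, so the sum of $a$ over all ordered distinct quadruples from any fixed sample vanishes. Hence $\bar a_X=\bar a_Y=0$ identically, and the conditional second moment is $O(1/n)$ deterministically, since $|a|\le1$.

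\emph{Main obstacle.} The delicate step is the bookkeeping of the permutation average. After applying $\Pi_n$, disjoint index quadruples are sent to disjoint but dependent draws without replacement, rather than independent ones. I would handle this by conditioning on the image of the first quadruple and exploiting the exact zero sum of $a$ over distinct quadruples of the remaining $n-4$ points. This yields a correction that is at most a constant times $1/n$. With this bound, $c_{n,\alpha}=O_P(n^{-1/2})\to0$, while $t_n^*\to\ts>0$. Hence $\Pr(t_n^*>c_{n,\alpha})\to1$.
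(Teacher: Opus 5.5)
Your proposal is correct. The validity argument and the strong-law step match the paper's. The difference is in how you show that the permutation critical value tends to zero.

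The paper uses only the vanishing of the conditional permutation mean together with a bounded-difference estimate. A transposition alters $O(n^3)$ of the $O(n^4)$ bounded summands, so $|t^*_{n,\pi}-t^*_{n,\pi\circ(ij)}|\leq C/n$. The random-transposition Poincar\'e inequality on the symmetric group then bounds the conditional variance by $C^2/(2n)$.

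You instead compute the conditional second moment directly, and your conditioning step gives more than you claim. Given the image of the first quadruple, a disjoint second quadruple is sent to a uniform ordered distinct quadruple of the remaining $n-4$ indices. Its $a$-sum vanishes by antisymmetry under interchange of slots $2$ and $3$, so every disjoint pair contributes exactly zero, not merely $O(1/n)$. The heuristic $\bar a_Y^2$, the sampling-without-replacement corrections and any separate treatment of the $X$-factor are therefore unnecessary. Only pairs sharing an index survive; your sentence attributing the $O(1/n)$ contribution to pairs ``sharing no index'' has this reversed. The result is the explicit deterministic bound
\[
\E\{(t^*_{n,\Pi_n})^2\mid\mathcal D_n\}\leq1-\frac{(n-4)_4}{(n)_4}\leq\frac{16}{n-3},
\]
and hence, by Chebyshev, $c_{n,\alpha}\leq4/\sqrt{\alpha(n-3)}$.

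Your route is elementary and self-contained, with explicit constants and no external spectral-gap input. The paper's route is shorter and more modular: it needs only the vanishing permutation mean and the bounded-difference property. It would therefore apply unchanged to other bounded-kernel statistics without tracking index overlaps.
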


\begin{proof}
Finite-sample validity under independence follows from permutation invariance.
The strong law for $U$-statistics gives $t_n^*\to\ts(X,Y)$ almost surely, and
the main zero-set theorem makes this limit strictly positive under every fixed
dependent alternative.

The conditional permutation mean is exactly zero: in the average over ordered
distinct $Y$-quadruples, interchanging slots 2 and 3 pairs every term with its
negative.  A transposition changes only $O(n^3)$ of the $O(n^4)$ uniformly
bounded summands.  Consequently, there is a constant $C$, independent of the
data and of $n$, such that
\[
 \sup_{\pi,\,i<j}
 \bigl|t_{n,\pi}^*-t_{n,\pi\circ(ij)}^*\bigr|\leq\frac Cn.
\]
The random-transposition Poincar\'e inequality on the symmetric group
\citep[Fact~A.2]{FilmusEtAl}, in its $n^{-1}$-times-the-transposition-sum
normalisation, therefore gives
\[
\begin{aligned}
 \operatorname{Var}_{\Pi_n}(t_{n,\Pi_n}^*\mid\mathcal D_n)
 &\leq\frac1n\sum_{i<j}
 \E_{\Pi_n}\!\left[
   \{t_{n,\Pi_n}^*-t_{n,\Pi_n\circ(ij)}^*\}^2
   \mid\mathcal D_n\right]  \\
 &\leq \frac{C^2}{2n},
\end{aligned}
\]
Since the conditional mean is zero, Chebyshev's inequality gives, uniformly
in the data,
\[
 c_{n,\alpha}\leq\frac{C}{\sqrt{2\alpha n}}\longrightarrow0.
\]
Since $t_n^*$ converges almost surely to a positive limit under every fixed
dependent alternative, the rejection probability tends to one.
\end{proof}

\section{General quadratic bounds and their sharpness}
\label{sec:supp-quadratic}

This section supplies the details for the arbitrary-law part and the sharpness
assertions of the proposition entitled ``Quadratic bounds from monotone
association'' in the main paper.  Retain the definitions of $\ts$, $\Da$ and
$\Ra$ from that paper.  Let $(X_i,Y_i)$, $i\geq1$, be independent copies of a
real-valued pair $(X,Y)$, put
$s_Z(ij)=\sgn(Z_i-Z_j)$, and define
\[
 \tau=\E\{s_X(12)s_Y(12)\},\qquad
 \rho_S=3\E\{s_X(12)s_Y(13)\},
\]
\[
 \upsilon=\frac34\E\bigl[
   \{s_X(12)-s_X(13)\}\{s_Y(12)-s_Y(13)\}\bigr].
\]
Exchangeability gives
\begin{equation}\label{eq:supp-linear-tau}
 \tau=\frac13\rho_S+\frac23\upsilon.
\end{equation}

For $\epsilon,\eta\in\{<,\leq\}$, put
\[
 \iota_{<}(u,t)=\1\{u<t\},\qquad
 \iota_{\leq}(u,t)=\1\{u\leq t\},
\]
and define
\begin{align*}
 F_{\epsilon,\eta}(x,y)
 &=\E\{\iota_\epsilon(X,x)\iota_\eta(Y,y)\},\\
 F_{X,\epsilon}(x)&=\E\{\iota_\epsilon(X,x)\},\qquad
 F_{Y,\eta}(y)=\E\{\iota_\eta(Y,y)\},\\
 \Delta_{\epsilon,\eta}(x,y)
 &=F_{\epsilon,\eta}(x,y)-F_{X,\epsilon}(x)F_{Y,\eta}(y).
\end{align*}
Writing $F$ for the joint law and $F_X\otimes F_Y$ for the product of the
margins, let
\[
 D_{\epsilon,\eta}=\int_{\R^2}\Delta_{\epsilon,\eta}^2\,dF,
 \qquad
 R_{\epsilon,\eta}=\int_{\R^2}\Delta_{\epsilon,\eta}^2\,dF_X\,dF_Y.
\]
The decomposition and four-boundary theorem in the main paper state, for every
bivariate law,
\begin{equation}\label{eq:supp-main-inputs}
 \ts=12\Da+24\Ra,\qquad
 9\Da\geq\sum_{\epsilon,\eta}D_{\epsilon,\eta},\qquad
 9\Ra\geq\sum_{\epsilon,\eta}R_{\epsilon,\eta},
\end{equation}
where here and below the sums run over $\epsilon,\eta\in\{<,\leq\}$.
We prove
\[
 \Da\geq\frac{\upsilon^2}{81},\qquad
 \Ra\geq\frac{\rho_S^2}{324},\qquad
 \ts\geq\frac{2\tau^2+(\rho_S-\tau)^2}{9},
\]
including sharpness of all three displayed constants.

\begin{proof}[Proof of the arbitrary-law bounds and sharpness]
Put
$\Delta^{\mathrm{sym}}(x,y)=
\sum_{\epsilon,\eta}\Delta_{\epsilon,\eta}(x,y)$.
The identity
$\sgn(x-X)=\iota_{<}(X,x)+\iota_{\leq}(X,x)-1$ gives
\[
 \Delta^{\mathrm{sym}}(x,y)
 =\operatorname{Cov}\{\sgn(x-X),\sgn(y-Y)\}.
\]
Integration with respect to the joint law and the product of the margins,
respectively, gives
\[
 \int_{\R^2}\Delta^{\mathrm{sym}}\,dF
 =\E\{s_X(12)s_Y(12)\}-\E\{s_X(12)s_Y(13)\}
 =\tau-\frac13\rho_S=\frac23\upsilon,
\]
and
\[
 \int_{\R^2}\Delta^{\mathrm{sym}}\,dF_X\,dF_Y
 =\E\{s_X(21)s_Y(31)\}=\frac13\rho_S.
\]
Viewing the four discrepancies as a direct-sum vector in each of these two
$L^2$ spaces, orthogonal projection onto the common constant vector gives the
exact identities
\begin{align}
 \sum_{\epsilon,\eta}D_{\epsilon,\eta}
 &=\frac{\upsilon^2}{9}
   +\sum_{\epsilon,\eta}\int_{\R^2}
      \left\{\Delta_{\epsilon,\eta}-\frac{\upsilon}{6}\right\}^2dF,
 \label{eq:supp-D-boundary-projection}\\
 \sum_{\epsilon,\eta}R_{\epsilon,\eta}
 &=\frac{\rho_S^2}{36}
   +\sum_{\epsilon,\eta}\int_{\R^2}
      \left\{\Delta_{\epsilon,\eta}-\frac{\rho_S}{12}\right\}^2
       dF_X\,dF_Y.
 \label{eq:supp-R-boundary-projection}
\end{align}
Together with \eqref{eq:supp-main-inputs}, these prove
\[
 \Da\geq\frac{\upsilon^2}{81},\qquad
 \Ra\geq\frac{\rho_S^2}{324}.
\]
Using \eqref{eq:supp-linear-tau} in the decomposition then gives
\[
 \ts\geq\frac4{27}\upsilon^2+\frac2{27}\rho_S^2
 =\frac{2\tau^2+(\rho_S-\tau)^2}{9}
 \geq\frac29\tau^2.
\]

For sharpness, let $0<\varepsilon<1/2$, set
$r_\varepsilon=(\varepsilon,1-2\varepsilon,\varepsilon)^{\mathsf T}$ and
$t=\varepsilon^3$, and form the $3\times3$ probability table, with rows and
columns indexed in increasing order by $\{1,2,3\}$,
\[
 P_{\varepsilon,t}=r_\varepsilon r_\varepsilon^{\mathsf T}
 +t\begin{pmatrix}
       -1&0&1\\
        0&0&0\\
        1&0&-1
     \end{pmatrix}.
\]
The perturbation has zero row and column sums, so both margins are
$r_\varepsilon$; the choice of $t$ makes all entries nonnegative.  Put
\[
 v=(-1,0,1)^{\mathsf T},\qquad {\boldsymbol 1}=(1,1)^{\mathsf T},
 \qquad A=(\sgn(i-j))_{i,j=1}^3.
\]
Then $P_{\varepsilon,t}=r_\varepsilon r_\varepsilon^{\mathsf T}
-tvv^{\mathsf T}$ and $Ar_\varepsilon=(1-\varepsilon)v$.  Since
$r_\varepsilon^{\mathsf T}Av=-2(1-\varepsilon)$ and
$r_\varepsilon^{\mathsf T}Ar_\varepsilon=v^{\mathsf T}Av=0$, the standard
sign-matrix formulas give
\[
 \begin{aligned}
  \tau
  &=\operatorname{tr}(P_{\varepsilon,t}^{\mathsf T}A
       P_{\varepsilon,t}A^{\mathsf T})
    =-2t(r_\varepsilon^{\mathsf T}Av)^2
    =-8t(1-\varepsilon)^2,\\
  \rho_S
  &=3(Ar_\varepsilon)^{\mathsf T}P_{\varepsilon,t}(Ar_\varepsilon)
    =-12t(1-\varepsilon)^2.
 \end{aligned}
\]
Equation~\eqref{eq:supp-linear-tau} therefore gives
$\upsilon=(3\tau-\rho_S)/2=-6t(1-\varepsilon)^2$.

For the quadratic components, the cumulative-residual matrix and the
cut-to-gap matrices of the main paper are
\[
 \mathcal X=-t{\boldsymbol 1}{\boldsymbol 1}^{\mathsf T},\qquad
 G(r_\varepsilon)=(\varepsilon,\varepsilon),\qquad
 K(r_\varepsilon)=I_2-\varepsilon^2
   {\boldsymbol 1}{\boldsymbol 1}^{\mathsf T}.
\]
Thus ${\boldsymbol 1}^{\mathsf T}K(r_\varepsilon){\boldsymbol 1}
=2(1-2\varepsilon^2)$, and the finite quadratic form gives
\[
 9\Ra=\operatorname{tr}\{K(r_\varepsilon)\mathcal X
 K(r_\varepsilon)\mathcal X^{\mathsf T}\}
 =4t^2(1-2\varepsilon^2)^2.
\]
There is only one gap correction.  In its notation,
$s_1=2\varepsilon$, $z_1=-2\varepsilon t{\boldsymbol 1}$ and
$c^{(1)}=2\varepsilon r_\varepsilon$, so
$K(c^{(1)})=4\varepsilon^2K(r_\varepsilon)$.  Hence that correction is zero,
\[
 z_1^{\mathsf T}K(r_\varepsilon)z_1
 -(2\varepsilon)^{-2}z_1^{\mathsf T}K(c^{(1)})z_1=0,
\]
and $\Da=\Ra$.  Finally, the decomposition gives
$\ts=12\Da+24\Ra=36\Ra$.  We have therefore obtained
\[
 \begin{gathered}
  \tau=-8t(1-\varepsilon)^2,\qquad
  \rho_S=-12t(1-\varepsilon)^2,\qquad
  \upsilon=-6t(1-\varepsilon)^2,\\
  \Da=\Ra=\frac{4t^2(1-2\varepsilon^2)^2}{9},\qquad
  \ts=16t^2(1-2\varepsilon^2)^2.
 \end{gathered}
\]
Consequently,
\[
 \frac{81\Da}{\upsilon^2}
 =\frac{324\Ra}{\rho_S^2}
 =\frac{9\ts}{2\tau^2+(\rho_S-\tau)^2}
 =\frac{(1-2\varepsilon^2)^2}{(1-\varepsilon)^4}
 \longrightarrow1.
\]
Thus both component constants and the first general quadratic bound are sharp.
Moreover,
\[
 \frac{\ts}{\tau^2}
 =\frac{(1-2\varepsilon^2)^2}{4(1-\varepsilon)^4}
 \longrightarrow\frac14.
\]
Hence no universal inequality $\ts\geq c\tau^2$ can have $c>1/4$, and the
stronger atomless bound $\ts\geq\tfrac12\tau^2$ cannot extend to arbitrary
laws.
\end{proof}

\section{Individual boundary sharpness and binary margins}
\label{sec:supp-boundary}

The four-boundary theorem in the main paper implies, separately for every
$\epsilon,\eta\in\{<,\leq\}$,
\[
 \Da\geq\frac19D_{\epsilon,\eta}.
\]
The factor $1/9$ is sharp for each choice.  For $0<t<1$, let
$P_t=\left(\begin{smallmatrix}1-t&0\\0&t\end{smallmatrix}\right)$ and
$\theta=t(1-t)$.  The finite-table formulas give
$D_{\leq,\leq}=(1-t)\theta^2$ and $\Da=\theta^2/9$, and hence
\[
 \frac{\Da}{D_{\leq,\leq}}
 =\frac{1}{9(1-t)}\longrightarrow\frac19
 \qquad\text{as }t\downarrow0.
\]
This proves sharpness for $(\epsilon,\eta)=(\leq,\leq)$.  Reflecting either
coordinate leaves $\Da$ unchanged and permutes the four boundary functionals,
so the same example proves sharpness for the other three choices.

For the binary-margin consequence, let $P$ be a finite $m\times n$ table on
attained row and column levels, with $\min(m,n)=2$.  The gap--gap correction
formula in the main paper gives
$9\Da(P)=9\Ra(P)+3\mathcal E(P)$, and $\mathcal E(P)=0$ throughout this
subclass.  Thus $\Da=\Ra$, and the decomposition
$\ts=12\Da+24\Ra$ yields
\[
 \ts=36\Ra,\qquad \Da=\Ra.
\]
Hence the identities used for $2\times2$ tables extend to every finite table
with a binary margin.  Equality in the $\Ra$ four-boundary bound does not
extend throughout this subclass: the equality characterisation in the main
paper shows that a dependent finite table attains it only when $m=n=2$.  No
corresponding classification of equality in the $\Da$ four-boundary bound is
asserted.

\section{Details of the atomless kernel identification}
\label{sec:supp-atomless}

Retain the definitions of $a$, $k_Z$ and $\kappa_Z$ from the main paper, and
suppose that $F_Z$ is continuous.  For
$\mathcal L(Z)^{\otimes2}$-almost every $(z,z')$, put
$u=F_Z(z)$ and $v=F_Z(z')$.  Using the second representation of $k_Z$, the
probability-integral transform and the weak-order invariance of $a$ reduce the
calculation to two independent uniform random variables $U_2,U_4$.  Suppose
first that $u<v$.  Then
$a(u,U_2,v,U_4)=1$ when both uniform variables lie below $u$ or both lie above
$v$, a set of area
\[
 u^2+(1-v)^2.
\]
It equals $-1$ on
$\{U_2<v,\ U_4>u,\ U_2<U_4\}$, whose area is
\[
 \int_0^u(1-u)\,dx+\int_u^v(1-x)\,dx
 =v-\frac{u^2+v^2}{2},
\]
and is zero elsewhere, up to null boundary sets.  Consequently,
\[
 \begin{aligned}
 k_Z(z,z')
 &=\frac13\left\{u^2+(1-v)^2-v+\frac{u^2+v^2}{2}\right\}\\
 &=\frac{u^2+v^2}{2}-v+\frac13.
 \end{aligned}
\]
Symmetry gives the same formula with $v$ replaced by $\max\{u,v\}$.

On the other hand, the centred Cram\'er--von Mises kernel becomes, for $u<v$,
\[
 \begin{aligned}
 \kappa_Z(z,z')
 &=\int_0^u t^2\,dt-\int_u^v t(1-t)\,dt
   +\int_v^1(1-t)^2\,dt\\
 &=\frac{u^2+v^2}{2}-v+\frac13.
 \end{aligned}
\]
This proves the off-diagonal identity used in the atomless kernel
identification in the main paper.
For completeness, on the diagonal $u=F_Z(z)$,
$a(u,U_2,u,U_4)=1$ precisely when both uniform variables lie on the same side
of $u$, and is zero otherwise, almost surely.  Hence
\[
 k_Z(z,z)=\frac{u^2+(1-u)^2}{3},\qquad
 k_Z(z,z)-\kappa_Z(z,z)=\frac{u(1-u)}{3}.
\]
The diagonal has zero probability for two independent draws from an atomless
law, which explains the almost-everywhere qualification in the main paper.

\section{The discrete-uniform calculation}
\label{sec:supp-discrete-uniform}

Let $Z$ be uniform on $\{1,\ldots,m\}$.  Direct summation gives
{\small
\[
 \begin{aligned}
 3m^2k_Z(i,j)
  &=(i-1)^2+(m-j)^2-i(m-i)\\
  &\quad-\tfrac12(j-i-1)(2m-i-j),\quad i<j,\\
 3m^2k_Z(i,i)&=(i-1)^2+(m-i)^2.
 \end{aligned}
\]}
Using the symmetry of $k_Z$, substitution and summation of powers yield
\[
 \Ra(Z,Z)=\frac1{m^2}\left\{\sum_{i=1}^m k_Z(i,i)^2
 +2\sum_{1\leq i<j\leq m}k_Z(i,j)^2\right\}
 =\frac1{90}+\frac{m^4-15m^3+25m^2-18m+4}{270m^5}.
\]
To verify the claimed threshold, write the numerator as
$p(m)=m^4-15m^3+25m^2-18m+4$.  If $m=14+n$ with $n\geq0$, then
\[
 p(m)=n^4+41n^3+571n^2+2838n+1908>0.
\]
For $2\leq m\leq13$, the convex quadratic $m^2-15m+25$ is at most $-1$, so
$p(m)=m^2(m^2-15m+25)-18m+4<0$; also $p(1)=-3$.  Thus $14$ is the least
positive integer for which the excess over $1/90$ is positive.

\section{Atom-refinement monotonicity}
\label{sec:supp-refinement}

For $z=(z_1,z_2,z_3,z_4)\in\R^4$, put
\[
 S_{ij\mid kl}(z)=|z_i-z_j|+|z_k-z_l|,
 \qquad
 a(z)=\sgn\{S_{12\mid34}(z)-S_{13\mid24}(z)\},
\]
where $\sgn(0)=0$, and define
\[
 \delta(z_1,z_2,z_3,z_4)
 =a(z_1,z_2,z_3,z_4)-a(z_1,z_4,z_2,z_3).
\]
For iid copies $Z_1,Z_2,\ldots$ of a real random variable $Z$, write
$a_Z(ijkl)=a(Z_i,Z_j,Z_k,Z_l)$ and
$\delta_Z(ijkl)=\delta(Z_i,Z_j,Z_k,Z_l)$, and put
\[
 H_Z=\frac1{12}\{
   \delta_Z(1234)-\delta_Z(1256)
   -\delta_Z(3478)+\delta_Z(5678)\}.
\]
Also, if $U,V$ are independent copies of $Z$, recall the pair kernel
\[
 k_Z(z,z')=-\frac13\E\{a(z,z',U,V)\}
 =\frac13\E\{a(z,U,z',V)\}.
\]
The identities proved in the main paper give
\[
 d(Z):=\E H_Z^2=\Da(Z,Z),\qquad
 \Ra(Z,Z)=\E\{k_Z(Z_1,Z_2)^2\},\qquad
 \ts(Z,Z)=\E\{a_Z(1234)^2\}.
\]

\begin{lemma}[Atom-refinement monotonicity]
\label{lem:supp-atom-refinement}
Let $Z$ have finite ordered support, and let $\widetilde Z$ be obtained by
replacing one atom of mass $a+b$ by two consecutive atoms of masses
$a,b>0$.  Then $d(\widetilde Z)\geq d(Z)$.
\end{lemma}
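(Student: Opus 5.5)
The plan is to realise $Z$ as a coarsening of $\widetilde Z$ on a common probability space and to show that $H_Z$ is a conditional expectation of $H_{\widetilde Z}$. Jensen's inequality then gives $d(Z)=\E H_Z^2\leq\E H_{\widetilde Z}^2=d(\widetilde Z)$.

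\emph{Coupling.} Let the split atom be $z_0$, with mass $a+b$. Take iid copies $Z_1,\dots,Z_8$ of $Z$. Independently, give each $Z_i$ equal to $z_0$ a label drawn as ``lower'' with probability $a/(a+b)$ and ``upper'' otherwise. Set $\widetilde Z_i$ to be the lower or upper subatom according to the label, and $\widetilde Z_i=Z_i$ off $z_0$. Then $\widetilde Z_1,\dots,\widetilde Z_8$ are iid copies of $\widetilde Z$, and $Z_i$ is a fixed nondecreasing function of $\widetilde Z_i$. Let $\mathcal F=\sigma(Z_1,\dots,Z_8)$.

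\emph{Key identity (tie-averaging).} I will show that for every $z\in\R^4$,
\[
 a(z)=\E\,a(z^\pi),
\]
where $z^\pi$ is obtained by breaking every tie class of $z$ according to an independent uniformly random linear order. I will check this from the case table in Subsection~\ref{subsec:projection-structure}, or equivalently from \eqref{eq:interval-form}. The value of $a$ depends only on which of the three separation events $12\parallel34$, $13\parallel24$, $14\parallel23$ occurs. A tie class of size one is trivial. If a tie class already lies inside a separated pair, breaking it preserves the separation. The remaining cases are the tied configurations for which $a(z)=0$. For these, I will verify that the exchangeable tie-breaking sends the configuration to the events carrying $-1$ and $+1$ with equal probability. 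An example is $(1,1,1,2)$, where $12\parallel34$ and $13\parallel24$ each have probability $1/3$. Only a handful of weak-order patterns on four points need checking, and a symmetry reduction (reflection, and the slot symmetries of $a$) shortens the list. This finite verification is the step I expect to be the main obstacle: it must be complete, including the patterns with two tie classes of size two.

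\emph{Tower argument.} Conditionally on $\mathcal F$, the labels within the tie class at $z_0$ are iid, hence exchangeable. It follows that uniformly breaking the residual ties of the $\widetilde Z$-sample yields, conditionally on $\mathcal F$, a uniform linear extension of the ties of the $Z$-sample. Ties at atoms other than $z_0$ are broken identically in both samples. Applying the tie-averaging identity twice, once to the $Z$-sample and once to the $\widetilde Z$-sample, and using the tower property gives
\[
 a_Z(ijkl)=\E\{a_{\widetilde Z}(ijkl)\mid\mathcal F\}
\]
for each quadruple of indices. By linearity the same holds for $\delta$ and hence $H_Z=\E(H_{\widetilde Z}\mid\mathcal F)$. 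Conditional Jensen then yields $d(Z)\leq d(\widetilde Z)$.

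If the tie-averaging identity failed for some pattern, the fallback would be direct computation. I would express $d(Z)=\Da(P)$ for the diagonal table $P=\operatorname{diag}(w)$ via $9\Da=9\Ra+3\mathcal E$ from Proposition~\ref{prop:master}, together with \eqref{eq:master-Ra}, and then compare the two sides after inserting one extra cut into the cut--gap representation.
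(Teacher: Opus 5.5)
\textbf{There is a genuine gap: the tie-averaging identity is false, so the Jensen argument does not go through.}

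Your claim that $a(z)=\E\,a(z^\pi)$ under uniform tie-breaking fails, and with it $H_Z=\E(H_{\widetilde Z}\mid\mathcal F)$. Take $z=(2,2,1,3)$. The tied pair $\{1,2\}$ lies strictly between $z_3$ and $z_4$, and no pairing is strictly separated. Indeed $S_{12\mid34}=S_{13\mid24}=2$, so $a(z)=0$. Its two linear extensions are:
\begin{itemize}
\item $(2,2.5,1,3)$, where $13\parallel24$ and $a=+1$;
\item $(2.5,2,1,3)$, where $14\parallel23$ and $a=0$.
\end{itemize}
The average is therefore $1/2$, not $0$.

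In general, breaking a tie between two middle points produces, each with probability $1/2$, the two pairings \emph{other} than the one grouping the tied points. These two pairings carry $-1$ and $+1$ only when the tied pair is $\{1,4\}$ or $\{2,3\}$. The average is $+1/2$ for $\{1,2\},\{3,4\}$ and $-1/2$ for $\{1,3\},\{2,4\}$. The contrast $\delta$ fails as well: $\delta(2,2,1,3)=0$, whereas both extensions give $\delta=+1$.

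This failure survives into $H$. Suppose the split atom $z_0$ has mass on both sides, and consider the positive-probability event $Z_1=Z_2=z_0$, $Z_3<z_0<Z_4$, $Z_5,Z_6<z_0$, $Z_7,Z_8\neq z_0$. On this event the terms $\delta(1256)$, $\delta(3478)$ and $\delta(5678)$ are unaffected by the split. However, $\E\{\delta_{\widetilde Z}(1234)\mid\mathcal F\}=2ab/(a+b)^2\neq0=\delta_Z(1234)$. Hence $\E(H_{\widetilde Z}\mid\mathcal F)-H_Z=ab/\{6(a+b)^2\}\neq0$ there, and conditional Jensen is not available.

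This is a failure of the strategy itself, not an incomplete case check. Suppose $a_Z=\E(a_{\widetilde Z}\mid\mathcal F)$ held quadruple by quadruple in your coupling. Since $k_Z(Z_1,Z_2)=\frac13\E\{a_Z(1324)\mid Z_1,Z_2\}$, you would also get $k_Z(Z_1,Z_2)=\E\{k_{\widetilde Z}(\widetilde Z_1,\widetilde Z_2)\mid Z_1,Z_2\}$. Jensen would then make $\Ra(Z,Z)=\E k_Z^2$ nondecreasing under atom splitting. The repeated-halving limit used for $d$ in Proposition~\ref{prop:sharp-scales} applies equally to the bounded six-copy kernel of $\Ra$. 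It would give $\Ra(Z,Z)\leq 1/90$ (the value for a uniform law on $(0,1)$) for every finitely supported $Z$. That contradicts \eqref{eq:Ra-uniform-range-counterexample} at $m=14$.

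So monotonicity of $d=\{\ts(Z,Z)-24\Ra(Z,Z)\}/12$ is a quantitative property of this particular combination, not a projection/Jensen fact. Your fallback, ``compare the two sides after inserting one extra cut,'' names no mechanism that would produce the sign. The paper supplies one:
\begin{itemize}
\item The block averages of $k_Z$ over the outer blocks coincide with the $k$-values of the three-point law $(l,c,r)$.
\item The change in $\ts(Z,Z)$ is local to the split atom, so $d(\widetilde Z)-d(Z)$ depends only on $(l,a,b,r)$.
\item An exact sum over the $4^8$ four-level configurations then gives $d(\widetilde Z)-d(Z)=ab\,Q(l,a,b,r)$, where $Q$ is a polynomial with nonnegative coefficients.
\end{itemize}
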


\begin{proof}
Let $C$ denote the atom being split, put $c=a+b$, and let $l$ and $r$ be the
masses strictly below and strictly above $C$, so $l+c+r=1$.  Denote the two
new consecutive atoms by $A<B$.  We first prove that the internal structures
of the lower and upper blocks disappear after averaging.

Let a finite law $W$ have ordered support $w_1<\cdots<w_m$, masses $p_i$, and
cumulative masses
\[
 F_0=0,\qquad F_i=\sum_{h\leq i}p_h.
\]
The order cases in the definition of $a$ give
\begin{align*}
 3k_W(w_i,w_i)
 &=F_{i-1}^2+(1-F_i)^2,\\
 3k_W(w_i,w_j)
 &=F_{i-1}^2+(1-F_j)^2-F_i(1-F_i)
   -\sum_{i<h<j}p_h(1-F_h),\qquad i<j.
\end{align*}
Indeed, for $i<j$, the two positive contributions have both auxiliary draws
strictly below $w_i$ or strictly above $w_j$.  The negative contribution has
probability
$F_i(1-F_i)+\sum_{i<h<j}p_h(1-F_h)$.

Apply these formulas to the unsplit law $Z$.  Let $L$ and $R$ denote its
lower and upper blocks.  For blocks of positive mass, define
\[
 \bar k_{st}=\E\{k_Z(Z_1,Z_2)\mid Z_1\in s,Z_2\in t\},
 \qquad s,t\in\{L,C,R\}.
\]
Summing the preceding formulas over the endpoints within $L$ and $R$ gives
\[
 (3\bar k_{st})_{s,t\in\{L,C,R\}}
 =
 \begin{pmatrix}
  (c+r)^2 & r^2-l(c+r) & -l(c+r)-cr\\
  r^2-l(c+r) & l^2+r^2 & l^2-(l+c)r\\
  -l(c+r)-cr & l^2-(l+c)r & (l+c)^2
 \end{pmatrix}.
\]
For example, the $(L,C)$ entry is the telescoping identity
\[
 \sum_{w_i<C}p_i\,3k_Z(w_i,C)
 =l\{r^2-l(c+r)\};
\]
the other entries follow from the same finite sums, with reflection for the
upper block.
Thus these block averages are exactly the $k$-values of the three-point law
with masses $(l,c,r)$; in particular, they do not involve the individual
outer-atom masses.  If $l=0$ or $r=0$, the corresponding label is deleted from
conditional expectations and block sums (equivalently, the unused conditional
means may be assigned arbitrary values), because every unconditional term
carrying that block has zero weight.

Couple the split and unsplit laws by the nondecreasing map $\chi$ that merges
$A$ and $B$ to $C$ and fixes every outer support point, so that
$\chi(\widetilde Z)$ has the law of $Z$.  For
$s,t\in\{L,A,B,R\}$ and support points $x\in s$, $y\in t$, set
\[
 q_{st}=3\{k_{\widetilde Z}(x,y)-k_Z(\chi(x),\chi(y))\}.
\]
The finite-support formulas above show that this value depends only on the
two block labels, not on the particular choices of $x$ and $y$ in $L$ or
$R$, and give the symmetric table
\[
\begin{array}{c|rrrr}
 q_{st}&L&A&B&R\\ \hline
 L&0&b(b+2r)&-a(b+r)&-ab\\
 A&b(b+2r)&b(b+2r)&-(l+a)(b+r)&-b(l+a)\\
 B&-a(b+r)&-(l+a)(b+r)&a(2l+a)&a(2l+a)\\
 R&-ab&-b(l+a)&a(2l+a)&0
\end{array}.
\]
Let $p_L=l,p_A=a,p_B=b,p_R=r$, and write
$\bar L=L$, $\bar A=\bar B=C$, and $\bar R=R$.  Expanding the square in the
representation of $\Ra(Z,Z)$ and then using the block-average table gives
\[
\begin{aligned}
 &\Ra(\widetilde Z,\widetilde Z)-\Ra(Z,Z)\\
 &\qquad={}
 \sum_{s,t\in\{L,A,B,R\}}p_sp_t
 \left\{\frac23q_{st}\bar k_{\bar s\bar t}
              +\frac19q_{st}^2\right\}.
\end{aligned}
\]
With the zero-mass convention just stated, this formula also covers $l=0$ and
$r=0$.  It proves that the change in $\Ra(Z,Z)$ depends on all
outer atoms only through $l$ and $r$.

The same conclusion for $\ts(Z,Z)$ follows from a local diagonal formula.
For the finite law $W$ above, the four strict-separation cases for $a^2$ and
exchangeability give
\[
 \ts(W,W)
 =4\sum_{i=1}^m\{F_i^2-F_{i-1}^2\}(1-F_i)^2.
\]
Splitting $C$ changes only its summand, replacing it by the two summands for
$A$ and $B$.  Explicitly,
\begin{align*}
 \ts(\widetilde Z,\widetilde Z)-\ts(Z,Z)
 =4\big[&\{(l+a)^2-l^2\}(b+r)^2
       +\{(l+a+b)^2-(l+a)^2\}r^2\\
       &-\{(l+a+b)^2-l^2\}r^2\big],
\end{align*}
which again involves only $l,a,b,r$.  Finally, the diagonal form of the main
decomposition is
\[
 d(W)=\frac{\ts(W,W)-24\Ra(W,W)}{12}.
\]
This proves analytically that
$d(\widetilde Z)-d(Z)$ depends on the outer blocks only through their total
masses $l$ and $r$, for arbitrary finite outer support.

It remains only to collect the resulting four-level configurations.  Let
$\mathcal S=\{L,A,B,R\}$, assign split levels
\[
 v_L=0,\qquad v_A=1,\qquad v_B=2,\qquad v_R=3,
\]
and collapsed levels
\[
 v_L^0=0,\qquad v_A^0=v_B^0=1,\qquad v_R^0=2.
\]
For $\boldsymbol s=(s_1,\ldots,s_8)\in\mathcal S^8$, let
\begin{align*}
 \Gamma(\boldsymbol s)={}&
 \delta(v_{s_1},v_{s_2},v_{s_3},v_{s_4})
 -\delta(v_{s_1},v_{s_2},v_{s_5},v_{s_6})\\
 &-\delta(v_{s_3},v_{s_4},v_{s_7},v_{s_8})
 +\delta(v_{s_5},v_{s_6},v_{s_7},v_{s_8}),
\end{align*}
and define $\Gamma_0(\boldsymbol s)$ by replacing every $v_s$ with $v_s^0$.
Thus $\Gamma$ and $\Gamma_0$ are the numerators $12H$ for the split and collapsed
configurations.  Direct collection of the finite sum gives the homogeneous
identity
\begin{align*}
 144\{d(\widetilde Z)-d(Z)\}
 &={}
 \sum_{\boldsymbol s\in\mathcal S^8}
 \{\Gamma(\boldsymbol s)^2-\Gamma_0(\boldsymbol s)^2\}
 \prod_{j=1}^8p_{s_j}\\
 &=16ab\{9Q(l,a,b,r)\}(l+a+b+r)^2.
\end{align*}
Since $l+a+b+r=1$, this is
$d(\widetilde Z)-d(Z)=ab\,Q(l,a,b,r)$, where
\begin{align*}
9Q(l,a,b,r)={}&(6l^3+7al^2+4a^2l+a^3)(b+2r)
 +4l^2(2b^2+6br+7r^2)\\
&+2a(3l+a)(b^2+3br+4r^2)
 +(2l+a)(b^3+4b^2r+7br^2+6r^3).
\end{align*}
Only weak-order configurations containing observations from both new subatoms
can contribute to the difference, accounting for the factor $ab$.  Every
factor and coefficient in the displayed sum is nonnegative, proving the claim.

\end{proof}

\section{Exact-algebra verification}
\label{sec:supp-exact-checks}

The script \path{anc/exact_algebra/verify_atom_refinement.py} rebuilds $a$,
$\delta$ and $\Gamma=12H$, performs the $4^8$ configuration sum in
Section~\ref{sec:supp-refinement}, and verifies both the homogeneous identity
and its nonnegative factorisation.  Its \texttt{--deep} option repeats the
check with two distinct support points in each outer block.  This is an
additional two-level check; the analytic block-aggregation argument in that
section is what covers arbitrary finite outer support.

The separate exact-arithmetic script
\path{anc/exact_algebra/verify_local_decomposition.py}
verifies the coefficient matching in the local certificate from
Proposition~10 of the main paper,
\[
 \mathcal B_a+\mathcal C_a=\mathcal N_a+
 \left(\frac1\ell+\frac1{m_0}+\frac1u-6\right)\|\xi_a\|_2^2,
\]
and the stronger exact sum-of-squares certificate showing that the displayed
coefficient is at least $3$.

\end{document}